\newcommand{\CC}{\mathbb{C}}
\newcommand{\bp}{\begin{pmatrix}}
\newcommand{\ep}{\end{pmatrix}}
\newcommand{\ol}[1]{\overline{#1}}
\newcommand{\LS}{L^{\text{sing}}}
\newcommand{\nn}{\widetilde{n}}
\newcommand{\cp}{\Delta^h}
\newcommand{\normal}[1]{{#1}^\wedge}
\DeclareMathOperator{\lk}{lk}
\DeclareMathOperator{\sign}{signature}
\DeclareMathOperator{\Fib}{Fib}
\DeclareMathOperator{\MHS}{MHS}
\DeclareMathOperator{\HVS}{HVS}
\DeclareMathOperator{\Irr}{Irr}
\DeclareMathOperator{\Id}{Id}
\newcommand{\fbi}{\Fib_\infty}
\numberwithin{equation}{section}
\numberwithin{equation}{subsection}
\theoremstyle{plain}
\newtheorem{theorem}[equation]{Theorem}
\newtheorem{lemma}[equation]{Lemma}
\newtheorem{proposition}[equation]{Proposition}
\newtheorem{corollary}[equation]{Corollary}
\theoremstyle{definition}
\newtheorem{example}[equation]{Example}
\newtheorem{remark}[equation]{Remark}
\newtheorem{definition}[equation]{Definition}
\numberwithin{equation}{subsection}
\newtheorem*{acknowledgements}{Acknowledgements}
\newcommand{\mv}{\mathcal{V}}
\newcommand{\mw}{\mathcal{W}}
\newcommand{\nd}{\textrm{ndeg}}
\newcommand{\reg}[1]{{#1}^\infty_{reg}}
\def\Z{\mathbb Z}\def\Q{\mathbb Q}
\newcommand{\tY}{\widetilde{Y}_0}
\title[Spectrum of plane curves]{Spectrum of plane curves via knot theory}
\author{Maciej Borodzik}
\address{Institute of Mathematics, University of Warsaw, ul. Banacha 2,
02-097 Warsaw, Poland}
\email{mcboro@mimuw.edu.pl}
\thanks{The first author is supported by Polish MNiSzW Grant No N N201 397937 and also by a Foundation for Polish Science FNP.
The second author is partially supported by OTKA Grant K67928. }
\author{Andr\'as N\'emethi}
\address{A. R\'enyi Institute of Mathematics, 1053 Budapest,  Re\'altanoda u. 13-15,  Hungary.}
\email{nemethi@renyi.hu}
\date{\today}
\subjclass[2010]{primary: 32S55, secondary: 14B07, 14D07, 14H50, 32G20}
\keywords{semicontinuity of the spectrum, Tristram--Levine signature, Seifert forms, variation structures,
Mixed Hodge Structure, link at infinity, plane algebraic curve}
\begin{document}
\begin{abstract}
In this paper we use topological methods to study various
semicontinuity properties of spectra of singular points of plane algebraic curves and
of polynomials in two variables at infinity.
Using the Seifert form and the Tristram--Levine signature of links,
 we reprove (in a slightly weaker version)
 a result obtained by Steenbrink and Varchenko on semicontinuity of spectra of singular points under deformation
and results of N\'emethi and Sabbah on semicontinuity of spectrum at infinity. We also relate the spectrum at infinity of a polynomial with spectra
of singular points of a chosen fiber.
\end{abstract}
\maketitle

\section{Introduction}
The Hodge spectrum of a local hypersurface isolated singularity $f:(\CC^{n+1},0)\to(\CC,0)$
is the output of the
mixed Hodge structure of the vanishing cohomology of the singular germ
 \cite{A,Stee,St,Var,Var2}. Usually, it is not
topological, it is one of the finest analytic invariants of the germ.
Although, it does not characterize the singularity completely, it gives
extremely strong  information about it. As it was conjectured by Arnold \cite{A},
 and proved by Varchenko \cite{Var,Var2} and Steenbrink \cite{St},
the spectrum behaves semicontinuously under deformations, which makes it, for example,
a very strong tool in attempts to solve the adjacency problem (i.e., to determine,
which singularities can specialize to a given one).

A more precise picture is the following: the
algebraic monodromy acts on the vanishing cohomology, this
cohomology supports a mixed Hodge structure, which is polarized by
the intersection form, and the Seifert form (which can be identified with the variation map).
The equivariant  Hodge numbers were  codified by Steenbrink  in the spectral pairs; if one deletes the
information about the weight filtration, one gets the spectral numbers $Sp(f)$. They are (in some normalization)
rational numbers in the interval $(0,n+1)$. In the presence of a deformation $f_t$,
where $t$ is the deformation parameter $t\in(\CC,0)$, the semicontinuity  guarantees that
 $|Sp(f_0)\cap I|\geq |Sp(f_{t\not=0})\cap I|$ for the semicontinuity domain $I$.
 Arnold made his conjecture for $I=(-\infty,x]$, Steenbrink and Varchenko proved the statement
 for $I=(x,x+1]$, which implies Arnold's conjecture. Additionally, Varchenko in \cite{Var} for some
 cases verified   the strongest version, namely semicontinuity for $I=(x,x+1)$.

The semicontinuity  property (with any domain)
cannot be extended to the spectral pairs, hence in studies targeting these kind of
applications one usually  works with spectrum only. This is what we will do in the present article as well.

On the other hand, (one of) the strongest topological invariants of $f$ is its Seifert form, for terminology see e.g.
\cite{AGV}.
The relation between the Hodge invariants and Seifert form was established by the second author in \cite{Nem2},
proving that the collection of  mod 2 spectral pairs are equivalent with the real Seifert form.
In this way, the real Seifert form is in strong relationship with the mod 2 spectrum, that is with
the collection of numbers $x$ mod 2 in $(0,2]$, where $x$ run over $Sp(f)$.
Clearly, for plane curve singularities, i.e. when $n=1$, by taking mod 2 reduction we loose no information.

\vspace{2mm}

Our primary  goal is to  {\it extend the above correspondence for an arbitrary link $(S^3_R,L$)}, where
$S^3_R$ is the boundary of some ball with radius $R$ in $\CC^2$, and $L$ is the intersection of $S^3_R$ with
some affine algebraic curve $C$ in $\CC^2$. The primary interest is the {\it link at infinity} of such affine curve
(hence $R\gg 0$),
but we also wish to develop a method to study  any general $(S^3_R,L)$, for which the available methods
in the literature are rather  sparse.

Let us consider a complex polynomial map $F:\CC^2\to \CC$.
For its topology at infinity see Neumann's article \cite{Neu1}.
Our first main result {\it recovers the spectrum at infinity associated with the limit mixed Hodge structure
at infinity (supported by the cohomology of the generic fiber) from the real Seifert form of the
regular link at infinity associated with $F$}. In particular, we reobtain the spectrum at infinity
topologically, in  pure link--theoretical language.

\vspace{2mm}

The key bridge which connects the link--theoretical language and invariants with the Hodge theoretical
spectrum is the Tristram--Levine signature \cite{Tr,Le}.
For example, for the  weighted homogeneous singularity given
by $\{x^p-y^q=0\}$ with $p$ and $q$ relative prime integers, the spectrum is
$Sp_{p,q}=\{\frac{i}{p}+\frac{j}{q},\, 1\le i\le p-1,\,1\le j\le q-1\}$,
while the Tristram--Levine signature function of the $(p,q)-$torus knot, evaluated at $e^{2\pi ix}$ with $x\in(0,1)$, $pqx\not\in\mathbb{Z}$,
is equal to
$2|Sp_{p,q}\cap(x,x+1)|-(p-1)(q-1)$, see e.g. \cite{Li}.
In \cite{BN} we made this relation rigorous, showing a direct
translation between spectra of singularities and Tristram--Levine signatures of their links.

\vspace{2mm}

In this correspondence, what is really surprising --- and this is the  {\it seconds main message} of the
article --- is the fact that the {\it semicontinuity of the mod 2 spectrum is topological}:
it  can be recovered independently
of analytic (Hodge theoretical) tools, it follows from pure link theory.
More precisely, we prove that length one `intervals' intersected by the mod 2 spectrum, namely sets of type
$Sp\cap (x,x+1)$ and $(Sp\cap (0,x))\cup (Sp\cap (x+1,2])$, for $x\in [0,1]$,
satisfy semicontinuty properties, whenever  this is question is well--posed.

In this article we exemplify this by three cases: we recover the semicontinuity
(in the above form, with slight assumptions)
for deformations of local plane curve singularities, corresponding to results of Varchenko and Steenbrink,
and also  we establish a semicontinuity
of the spectrum at infinity associated with a family of polynomials in two variables, in the spirit of \cite{NS}.
The third case targets a new phenomenon:
in the context of an affine curve $C\subset \CC^2$
we show a semicontinuity connecting the local spectra of the singularities of $C$ with the spectrum at infinity
of $C$.
In all these cases,
the key link--theoretical ingredient is a Murasugi type inequality,
which controls the modification
of the  Tristram--Levine signature under those type of surgeries which appears when we pass from
$C\cap S^3_{r}$ to $C\cap S^3_{R}$ via Morse theory ($r<R$). This was studied
by the first author in \cite{Bo}.

\vspace{2mm}

The organization of the paper is the following.
In section 2 we review the theory of hermitian variation structures from \cite{Nem2}, their relation
with the spectrum,  how can one associate such a structure to a link \cite{BN}, and how it connects the spectrum with the
Tristram--Levine signatures \cite{BN}. We also recall some of the main results of \cite{Bo}
 about surgery inequalities of links of type $S^3_R\cap C$.
 Section 3 contains the study of the spectrum at infinity of a polynomial map
in terms of the Seifert form at infinity.  In section 4 we prove semicontinuity results regarding the spectrum.

\vspace{2mm}

For a finite set $A$, we denote by $|A|$ the cardinality of $A$.

%\begin{convention}\label{genus}
%For a smooth compact connected surface $C$ with boundary $\partial C$, the \emph{genus} $g(C)$ is defined as
%follows: we glue all boundary circles by disks, obtaining a closed surface $C_{closed}$. Then we put %$g(C)=g(C_{closed})$. For
%a disjoint union of surfaces $C=C_1\cup\dots\cup C_k$ we define $g(C)=g(C_1)+\dots+g(C_k)$.
%For singular Riemann surfaces $C$, $g(C)$ is the genus  of the normalization of $C$.
%\end{convention}

\begin{acknowledgements}
The main part of the paper was accomplished during the stay of the first
author at Alfr\'ed R\'enyi Institute in Budapest. The first author expresses
his thank for the hospitality. His stay was supported by Foundation for
Polish Science (program KOLUMB) and by Polish MNiSzW Grant
No N N201 397937.
\end{acknowledgements}

\section{Variations structures of links}\label{S:var}
We recall in \ref{ss:hvs}  the definition of an abstract  {\it hermitian variation structure} and its {\it spectra},
and in \ref{ss:var} the definition of the hermitian variation structure and spectra associated with
links in a three-sphere. Subsection \ref{ss:MHS-def} reviews
the definition of mixed Hodge structures
and their Hodge spectra. Finally, in \ref{ss:MHS-TL}
we draw a relationship observed in \cite{BN} between spectra and Tristram--Levine signatures of links. In
\ref{S:morseplane} we recall some results from \cite{Bo} which are
crucial ingredients in the proof of the semicontinuity results of the last section.

\subsection{}\label{ss:hvs}{\bf  Hermitian variation structures}
 were introduced in   \cite{Nem2}, they are generalization of
$\varepsilon$--symmetric isometric structures. Here we review the minimal basics, for more details see \cite{Nem2,Nem-Var}.

Recall that a
structure $(U={\mathbb C}^n; b, h)$, where $b$ is an $\varepsilon$--symmetric hermitian  form on $U$
preserved by the automorphism
$h$ of $U$, is called an \emph{isometric structure} (for $\varepsilon=\pm 1$).
The classification of  isometric structures when $b$ is non--degenerate was established by Milnor  \cite{Milnor-forms}
(see also \cite{Neu-inv,Neu-spli}).
Any $\varepsilon$--hermitian variation structure (in short $\varepsilon$--HVS)
 can be regarded as the isometric structure together with an operator $V\colon U^*\to U$ such that
\begin{equation}\label{eq:complete}
\ol{V^*}=-\varepsilon V\ol{h^*}\ \text{ and } \ V\circ \tilde{b}=h-\Id,
\end{equation}
where $\tilde{b}$ is the form $b$ regarded as map from $U$ to $U^*$.
We denote it as $\mv=(U;b,h,V)$. Here $*$ denotes the duality, while \ $\overline{\cdot}$ \ the
complex conjugation.

\begin{definition}\label{def:completed}
We say that the isometric structure $(U;b,h)$ can be completed to a hermitian variation structure,
if there exists $V:U^*\to U$ such that \eqref{eq:complete}
is satisfied.
\end{definition}

If $b$ is non--degenerate, then  the isometric structure can be uniquely completed to a
HVS:  $V=(h-\Id)\circ \tilde{b}^{-1}$. In general, not every isometric structures can be
completed (see e.g. (\ref{prop:SS})(c) below). Moreover, if a completion exists,
in general, it is not unique (even if we restrict ourselves to non--degenerate matrices $V$, see e.g.
\cite[(2.7.7)]{Nem2}).

A HVS is called {\it simple} if $V$ is an isomorphism. The classification of simple HVS's is established
 in \cite{Nem2}.   Each simple variation structure is a direct sum of indecomposable simple
variation structures. Indecomposable
structures can be listed:   for each positive integer $k$, and for each
$\lambda\in\mathbb{C}$ such that $0<|\lambda|\le 1$ we have

\begin{itemize}
\item for $|\lambda|<1$ a unique simple indecomposable variation structure $\mv^{2k}_\lambda$;
\item for $|\lambda|=1$ two simple indecomposable structures, denoted by $\mw^k_\lambda(+1)$ and $\mw^k_\lambda(-1)$.
\end{itemize}

This classification is a refinement of the Jordan block decomposition of the matrix $h$
(or of Milnor's classification of non--degenerate isometric structures).   More precisely, the matrix $h$
corresponding to $\mw^k_\lambda(\pm 1)$ is a single Jordan block of size $k$ and eigenvalue $\lambda$,
while the one corresponding to $\mv^{2k}_\lambda$ has two Jordan blocks of size $k$: one with eigenvalue $\lambda$, the other
with eigenvalue $1/\lambda$. For their precise form see \cite{Nem2}.

Write  a simple variation structure $\mv$ as  a (unique)
sum of the indecomposable ones:

\begin{equation}\label{eq:mvlsum}
\mv=\bigoplus_{\substack{0<|\lambda|<1\\ k\ge 1}} q^k_\lambda\cdot   \mv^{2k}_\lambda
\oplus
\bigoplus_{\substack{|\lambda|=1\\ k\ge 1, \ u=\pm 1}} p^k_\lambda(u)\cdot  \mw^{k}_\lambda(u)
\end{equation}
for certain non--negative integers $q^k_\lambda$ and $p^k_\lambda(u)$.
Here we write $m\cdot \mv$ for $\mv\oplus\dots\oplus\mv$ ($m$-times).

The numbers $\{q^k_\lambda\}_{|\lambda|<1}$ and $\{p^k_\lambda(\pm 1)\}_{\lambda\in S^1}$ are called
the \emph{H--numbers} of the HVS $\mv$.

Using H--numbers we can define the {\it spectrum of $\mv$}. Sometimes, in order to emphasize the source of the definition,
we call it {\it HVS--spectrum}.

\begin{definition}\label{def:spectrum} (\expandafter{\cite{Nem-semi} or \cite[(2.3.1)-(2.3.3)]{BN}}) \
Consider the  H--numbers $\{q^k_\lambda\}_{|\lambda|<1}$ and $\{p^k_\lambda(\pm 1)\}_{\lambda\in S^1}$ of $\mv$.
The {\it extended spectrum} $ESp$ is the union $ESp=Sp\cup ISp$, where
\begin{itemize}
\item[(a)] $Sp$, the {\it spectrum},  is a finite set of real numbers from the interval $(0,2]$ such that any real
number $\alpha$ occurs in $Sp$ precisely $s(\alpha)$ times, where
\[s(\alpha)=\sum_{n=1}^\infty\sum_{u=\pm1}\left(\frac{2n-1-u(-1)^{\lfloor \alpha\rfloor}}
{2}p^{2n-1}_\lambda(u)+np^{2n}_\lambda(u)\right), \ \
(e^{2\pi i\alpha}=\lambda).\]
\item[(b)] $ISp$ is the set of complex numbers from $(0,2]\times i\mathbb{R}$,
$ISp\cap\mathbb{R}=\emptyset$, where $z=\alpha+i\beta$ occurs
in $ISp$ presizely $s(z)$ times, where
\[
s(z)=\begin{cases}
\sum k\cdot q^k_\lambda&\text{if $\alpha\le 1$, $\beta>0$ and $e^{2\pi i z}=\lambda$}\\
\sum k\cdot q^k_\lambda&\text{if $\alpha>1$, $\beta<0$ and $e^{2\pi i z}=1/\bar{\lambda}$}\\
0&\text{if $\alpha\leq 1$ and $\beta<0$, or $\alpha> 1$ and $\beta>0$}.
\end{cases}
\]
\end{itemize}

\end{definition}

Since the size  of a matrices corresponding to $\mv^{2k}_\lambda$
is $2k$ and to $\mw^k_\lambda(\pm 1)$ is $k$, one gets
\begin{equation}\label{cor:es}
|ESp|=\dim U=\deg \det(h-t\Id).
\end{equation}

\subsection{The HVS and spectrum of a link}\label{ss:var}
The {\it variation structure} and {\it H--numbers}
of a link in $S^3$ were defined in \cite{BN}.
Let us review shortly how the construction is performed.

Let $S$ be a Seifert matrix of a link $L$. (For the convention of its definition see
\ref{ss:Lfund}.)  By Keef's result \cite{Keef} $S$ is S--equivalent either to an empty matrix, or
to a matrix $S'$, which can be decomposed into a direct sum
\begin{equation}\label{eq:decomp}
S'=S_0\oplus S_\nd,
\end{equation}
where $S_0$ is a zero matrix and $S_\nd$ is non--degenerate, that is $\det S_\nd\neq 0$. Moreover,
any two such non--degenerate models $S_\nd$ of the same link, are congruent over $\Q$.
The size of $S_0$ is also determined by $L$ (it is equal to $\dim(\ker S\cap \ker S^T)$), we will call it
the {\it irregularity of $L$}, and we will denote it by
\begin{equation}\label{eq:decomp-s0}
\Irr=\Irr(L):=size(S_0).
\end{equation}
Let $n$ be the size of $S_\nd$.
The quadruple $\mv=(U,b,h,V)$, where $U=\mathbb{C}^n$, $V=(S^T_\nd)^{-1}$, $h=VS$, $b=S-S^T$,
constitutes a HVS
 with the sign choice $\varepsilon=-1$. (Here ${\cdot}^T$ denote transposition.)
As changing a Seifert matrix results in congruency of $S_\nd$, which leads to an isomorphism
of variations structures, the structure $\mv$ does not depend on the
choice of a Seifert matrix, so it is a well-defined
link invariant, called $\mv_L$.
Additionally, $\mv_L$ is \emph{simple}. %, which means that $\det V\neq 0$.
Note that $\mv_L$ is defined over the rational numbers ${\mathbb Q}$.
The characteristic polynomial $\cp=\det(h-t\Id)$ of $h$ will be called the \emph{characteristic polynomial of the link}.
%\end{definition}
Its connection with  Alexander polynomial is as follows (see e.g. \cite[\S 4]{BN}):

\begin{lemma}\label{rem:alex}
Let $\mathcal{V}_L$ be as above.
If the Alexander polynomial $\Delta$ of $L$ is non--zero then
$\Delta=\cp$ up to multiplication
by an invertible element of $\mathbb{Q}[t,t^{-1}]$.
If the Alexander polynomial is zero, then $\cp$ is proportional
to the first higher Alexander polynomial $\Delta_k$, which is not identically zero:
$\Delta_k=0$ for $0\leq k<\Irr$
and $\Delta_{\Irr}=\cp$ (up to an invertible element).
\end{lemma}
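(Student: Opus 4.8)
The plan is to reduce the statement to the non--degenerate model $S_{\nd}$ supplied by Keef's theorem and then read off all Alexander polynomials of $L$ directly from the block--diagonal presentation matrix of the Alexander module.

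First I would recall the standard facts. Fixing a connected Seifert surface of $L$, the Alexander module $\mathfrak{A}_L$ (the first homology of the infinite cyclic cover, with $\Q$--coefficients) is presented by the square matrix $tS-S^T$ for any Seifert matrix $S$ of $L$, and $\Delta_k$ is a generator of the smallest principal ideal of $\Q[t,t^{-1}]$ containing the $k$--th Fitting ideal of $\mathfrak{A}_L$ --- equivalently, $\Delta_k$ is the gcd of the minors of $tS-S^T$ of size $N-k$, where $N$ is the size of $S$. Since S--equivalent Seifert matrices present isomorphic Alexander modules, and the $\Delta_k$ depend only on that module, I may replace $S$ by the model $S'=S_0\oplus S_{\nd}$ of \eqref{eq:decomp}, with $S_0=0$ of size $\Irr$. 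So it suffices to compute the Fitting ideals of $A:=tS'-(S')^T=0_{\Irr}\oplus M$, where $M:=tS_{\nd}-S_{\nd}^T$; note $\det M\neq0$ as a polynomial, its leading term being $t^n\det S_{\nd}$.

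Then I would carry out the minor count for $A$. If a square submatrix of $A$ involves a row whose index lies in the first ($0_{\Irr}$) block, that whole row of $A$ vanishes (the diagonal block is zero and the off--diagonal blocks of a block--diagonal matrix are zero), so the submatrix is singular. Hence every nonzero minor of $A$ uses only the $n$ rows and $n$ columns of the second block. Therefore all minors of $A$ of size $>n$ vanish, so the $k$--th Fitting ideal is zero for $k<\Irr=N-n$, i.e. $\Delta_k=0$ for $0\le k<\Irr$; and the only nonzero minor of size $n$ is $\det M$ itself, so the $\Irr$--th Fitting ideal is the principal ideal $(\det M)$, i.e. $\Delta_{\Irr}$ equals $\det M$ up to a unit of $\Q[t,t^{-1}]$. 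It then remains to identify $\det M$ with $\cp$: using $h=(S_{\nd}^T)^{-1}S_{\nd}$ one computes
\[
\cp=\det(h-t\,\Id)=\det\!\big((S_{\nd}^T)^{-1}(S_{\nd}-tS_{\nd}^T)\big)=(\det S_{\nd})^{-1}\det(S_{\nd}-tS_{\nd}^T)=\frac{(-1)^n}{\det S_{\nd}}\,\det M,
\]
where the last step uses $S_{\nd}-tS_{\nd}^T=-(tS_{\nd}^T-S_{\nd})$ and $\det(tS_{\nd}^T-S_{\nd})=\det M$ (transpose). Since $(-1)^n/\det S_{\nd}\in\Q^\times$ is a unit of $\Q[t,t^{-1}]$, this gives $\cp=\Delta_{\Irr}$ up to a unit.

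Finally the dichotomy falls out: $\det A=\det(0_{\Irr})\cdot\det M$ is nonzero precisely when $\Irr=0$. So if the Alexander polynomial $\Delta=\Delta_0$ is nonzero then $\Irr=0$, hence $S'=S_{\nd}$ and $\Delta=\Delta_0$ equals $\cp$ up to a unit; and if $\Delta=\Delta_0=0$ then $\Irr\ge1$, the polynomials $\Delta_0,\dots,\Delta_{\Irr-1}$ all vanish, and $\Delta_{\Irr}$ --- the first higher Alexander polynomial not identically zero --- equals $\cp$ up to a unit. The one point demanding genuine care is the bridge between the purely algebraic normal form of $S$ (Keef's $S_0\oplus S_{\nd}$ with $\operatorname{size}(S_0)=\dim(\ker S\cap\ker S^T)$, cf. \eqref{eq:decomp-s0}) and the topological Fitting ideals: namely that S--equivalence preserves every $\Delta_k$ up to units and that exactly $\Irr$ of them vanish. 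This is routine Alexander--module bookkeeping --- and is done in \cite[\S4]{BN} --- but it is the step that must be made precise.
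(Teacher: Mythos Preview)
The paper does not supply its own proof of this lemma; it simply cites \cite[\S 4]{BN}. Your argument is correct and is exactly the standard Fitting--ideal computation one expects to find there: reduce via Keef to the normal form $S_0\oplus S_{\nd}$, observe that the zero block annihilates the first $\Irr$ elementary ideals of $tS'-(S')^T$ while the $\Irr$--th is generated by $\det(tS_{\nd}-S_{\nd}^T)$, and then identify this determinant with $\cp$ up to a unit via $h=(S_{\nd}^T)^{-1}S_{\nd}$.
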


\begin{definition} Consider
the integers  $\{q^k_\lambda\}_{|\lambda|<1}$ and $\{p^k_\lambda(\pm 1)\}_{\lambda\in S^1}$
provided by the direct sum decomposition (\ref{eq:mvlsum}) of $\mv_L$. They are called
the \emph{H--numbers} of the link $L$. The associated (extended) spectrum is called the
(extended) spectrum of the link.
\end{definition}

From (\ref{cor:es}) one has  $|ESp|=\deg \cp$.
Moreover, $Sp\setminus \Z$ is symmetric with respect to 1.

\subsection{Mixed Hodge structures and their spectra}\label{ss:MHS-def}

The name and definition of spectrum in Definition \ref{def:spectrum}
 is motivated by the fact that if $L$ is an
{\it algebraic link}, i.e. the link of (local) isolated
plane curve singularity, then $ISp$ is empty and $Sp$  is the classical spectrum
associated with the mixed Hodge structure of the vanishing cohomology (for this  see e.g.
 \cite{Stee,St,Var}).

More generally, let  $f\colon(\mathbb{C}^{n+1},0)\to(\mathbb{C},0)$
be the germ of  an analytic function with isolated singularity at $0$, and
 let $Y$ be the Milnor fiber and  $U=\widetilde{H}_n(Y,\mathbb{R})$.
(For details regarding the Milnor fibration, see e.g. \cite{Milnor-book,AGV,Nem2}.)
One takes the monodromy operator  $h\colon U\to U$, the intersection
form $b\colon U\times U\to\mathbb{R}$ and the
variation operator $V\colon U^*\to U$. One checks (see e.g. \cite{AGV} or \cite[\S\,5]{Nem2}) that
the complexification of  $(U;b,h,V)$ constitutes a $(-1)^n$--HVS.
If $S$ is the Seifert matrix of the Milnor fibration, then at the level of matrices
$V=(S^T)^{-1}$. Since $S$ is unimodular, $V$ will be isomorphism too, hence the
variation structure is simple. For plane curves $\varepsilon=-1$, hence
$h=(S^T)^{-1}S$ and $b=S-S^T$.
The structure $(U;b,h,V)\otimes {\mathbb C}$ is called the `homological HVS' of the germ.

There is dual a  HVS, the `cohomological HVS' associated with the germ,
 which sits on $H^*:=\widetilde{H}^n(Y,\mathbb{C})$.
 Additionally, $\widetilde{H}^n(Y,\mathbb{C})$ carries a limit mixed Hodge structure
 with Hodge filtration $F$ and weight filtration $W$ such that the semisimple part $h_{ss}^*$ of the
  cohomological monodromy operator acts on $(H^*,F,W)$.
They define spectral pairs. In order to eliminate any confusion about
the existing  different normalizations, we
provide some details.

One  considers the
generalized $\lambda$--eigenspaces   $U^*_\lambda$  for all the eigenvalues $\lambda$
of the Gauss--Manin monodromy operator $h_{GM}=(h^*_{ss})^{-1}$
  and the equivariant  (Gauss--Manin) Hodge numbers
 $h_{\lambda}^{p,q}:=\dim\, Gr^p_F\, Gr^W_{p+q}U^*_\lambda$.

 Then these numbers can be codified in a different way
 in the collection of {\it Hodge spectral pairs } of
$(U^*,F,W;h^*_{ss})$. This is a collection of pairs $(\alpha,w)$ from ${\mathbb R}\times{\mathbb N}$
defined by
\begin{equation}\label{eq:Hodgespp}
Spp_{GM}
%^{\dagger}
(f)=\sum_{(\alpha,w)}\ h^{n+[-\alpha],w+s-n-[-\alpha]}_{\exp(-2\pi i\alpha)}
(\alpha,w) \in {\mathbb N}[{\mathbb R}\times {\mathbb N}],
\end{equation}
where $s=1$ if $\lambda=\exp(-2\pi i\alpha)=1$ and $s=0$ otherwise.

This can be transformed in several ways. If by some geometric reason, one wishes to emphasize more
the cohomological monodromy operator $h^*_{ss}$ (instead of $h_{GM}$), one considers
\begin{equation}\label{eq:Hodgespp2}
Spp_*
%^{\dagger}
(f)=\sum_{(\alpha,w)}\ h^{n+[-\alpha],w+s-n-[-\alpha]}_{\exp(2\pi i\alpha)}
(\alpha,w) \in {\mathbb N}[{\mathbb R}\times {\mathbb N}],
\end{equation}

If we forget the weight filtration, then from the
equivariant Hodge filtration one can read the {\it Hodge spectrum}, namely
\begin{equation}\label{eq:Hodgesp}
Sp_*
%^\dagger
(f)=\sum\alpha \in {\mathbb N}[{\mathbb R}] \ \ (\mbox{the sum over the spectral pairs $(\alpha,w)$
of $Spp_*(f)$)}.
\end{equation}
Any spectral number $\alpha$ is in the interval $(-1,n)$. Another normalizations of the spectrum identifies it
in the interval $(0,n+1)$:
$Sp_{\MHS}(f)$ is the collection of numbers $(\alpha+1)$ where $\alpha$ runs over the entries of $Sp_*(f)$.
% that is, $Sp_{\MHS}(f)$ is $Sp(f)$ shifted by 1.

The identification of the Hodge invariants with the associated hermitian variation structure goes through the
crucial polarization property of the mixed Hodge structure. In this way, the cohomological
hermitian variation structure of $f$ can be obtained from $(U^*,F,W)$ by collapsing the Hodge filtration
mod 2, having the collapsed spectral  numbers in $(-1,1]$.
The corresponding H--numbers are, in fact, the equivariant primitive Hodge numbers of  $(U^*,F,W)$
under this collapsing procedure. Usually, the homological and cohomological HVS's do not agree, in the case
$\varepsilon=(-1)^n=-1$ they differ by a sign: $\mv_{coh}=-\mv_{hom}$.
This explains the two slightly different definition of the spectral numbers (\ref{def:spectrum})(a) and (\ref{eq:Hodgespp2}).
Nevertheless, one has the following identification:

\begin{proposition}[\expandafter{\cite[(6.5)]{Nem2}}]\label{prop:mod2}
The HVS--spectrum $Sp_{\HVS}$ is a mod 2 reduction of the Hodge spectrum $Sp_{\MHS}$ considered in
$(0,2]$. In other words
\[Sp_{\HVS}=\{x \bmod 2\colon\, x\in Sp_{\MHS}\}.\]
\end{proposition}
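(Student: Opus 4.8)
The plan is to unwind the two normalizations of the Hodge spectrum and the HVS--spectrum and check that, term by term on the equivariant primitive Hodge numbers, a reduction mod $2$ into $(0,2]$ identifies them. First I would fix an eigenvalue $\lambda=e^{2\pi i\alpha}$ of the semisimple monodromy and recall, from \cite{Nem2}, the precise dictionary between the equivariant Hodge numbers $h^{p,q}_\lambda$ of the limit MHS $(U^*,F,W;h^*_{ss})$ and the H--numbers $p^k_\lambda(\pm1)$, $q^k_\lambda$ of the cohomological variation structure $\mv_{coh}$: the collapsing-mod-$2$ procedure described just above the statement expresses each $p^k_\lambda(u)$ as a sum of equivariant \emph{primitive} Hodge numbers in the appropriate parity class of $p$, and the $q^k_\lambda$ (the $|\lambda|<1$ part) do not contribute to the real spectrum at all, consistently with the fact that for an isolated hypersurface germ $S$ is unimodular so all eigenvalues lie on $S^1$ and the $ISp$ part is empty.

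Next I would compute both sides explicitly. On the HVS side, Definition \ref{def:spectrum}(a) gives the multiplicity $s(\alpha)$ of a real number $\alpha\in(0,2]$ with $e^{2\pi i\alpha}=\lambda$ as the stated weighted sum of the $p^k_\lambda(u)$, the weight depending only on the parity $\lfloor\alpha\rfloor\in\{0,1\}$ and on $k\bmod 2$; note that the interval $(0,2]$ is exactly the range of a mod $2$ reduction, and that the floor distinguishes the two lifts $\alpha\in(0,1]$ and $\alpha+1\in(1,2]$ of a given point of $S^1$. On the MHS side, I would start from $Sp_{\MHS}(f)$, i.e. the numbers $\alpha+1$ with $\alpha$ running over the spectral numbers read off from $Spp_*(f)$ in \eqref{eq:Hodgespp2}–\eqref{eq:Hodgesp}, where $\alpha\in(-1,n)$, so that $Sp_{\MHS}\subset(0,n+1)$, and for $n=1$ in particular $(0,2)$. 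Reducing these mod $2$ into $(0,2]$ and grouping by eigenvalue, the multiplicity of a class is the total equivariant Hodge number $\sum_{p+q=w}h^{p,q}_\lambda$ summed over those $w$ compatible with the prescribed value of $p\bmod 2$ fixed by $\alpha$; by the polarization/primitive decomposition of the MHS this is exactly the weighted sum of primitive Hodge numbers appearing in $s(\alpha)$. Matching the combinatorial coefficients (the $\tfrac{2n-1\mp1}{2}$ and the $n$ in front of $p^{2n-1}_\lambda$ and $p^{2n}_\lambda$) against the sizes of the Jordan blocks $\mw^k_\lambda(u)$ and the Lefschetz $\mathfrak{sl}_2$-strings in the MHS is the heart of the check.

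The main obstacle, and the step I would spend the most care on, is bookkeeping the two sign/normalization shifts simultaneously: the passage from $h_{GM}$ to $h^*_{ss}=h_{GM}^{-1}$ between \eqref{eq:Hodgespp} and \eqref{eq:Hodgespp2} (which replaces $\alpha$ by $-\alpha$, equivalently $\lambda$ by $\lambda^{-1}=\bar\lambda$), the relation $\mv_{coh}=-\mv_{hom}$ for $\varepsilon=-1$ which flips $p^k_\lambda(u)\leftrightarrow p^k_\lambda(-u)$, and the $+1$ shift in $Sp_{\MHS}=Sp_*+1$; one must verify these conspire so that the parity-dependent weights on the two sides agree rather than getting swapped. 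Here I would lean directly on \cite[(6.5)]{Nem2}, whose content is precisely this identification, so that the proof reduces to transcribing that statement into the present normalization; concretely I would: (i) recall the collapsing-mod-$2$ description of $\mv_{coh}$ from \cite{Nem2} and the resulting formula for its H--numbers in terms of equivariant primitive Hodge numbers; (ii) substitute into Definition \ref{def:spectrum}(a) to get an explicit formula for $Sp_{\HVS}$ as a multiset in $(0,2]$; (iii) write $Sp_{\MHS}$ explicitly from \eqref{eq:Hodgespp2}–\eqref{eq:Hodgesp} plus the $+1$ shift; (iv) reduce (iii) mod $2$ into $(0,2]$ and observe it coincides with (ii) eigenvalue by eigenvalue. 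Finally I would remark that \eqref{cor:es} and the fact that $S$ is unimodular guarantee $|Sp_{\HVS}|=\deg\cp=\mu=|Sp_{\MHS}|$, so the equality of multisets is not merely an inequality of multiplicities.
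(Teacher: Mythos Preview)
The paper does not give a proof of this proposition at all: it is stated as a citation of \cite[(6.5)]{Nem2}, and the only argument offered is the informal paragraph immediately preceding the statement, which sketches that the cohomological HVS is obtained from the limit MHS by collapsing the Hodge filtration mod~$2$ via the polarization, that the H--numbers are the equivariant primitive Hodge numbers under this collapsing, and that the sign $\mv_{coh}=-\mv_{hom}$ accounts for the normalization discrepancy. Your proposal is precisely an elaboration of that same outline, and the steps you list---the dictionary between $h^{p,q}_\lambda$ and $p^k_\lambda(u)$ via primitive Hodge numbers, the bookkeeping of the $h_{GM}\leftrightarrow h^*_{ss}$ and $\mv_{coh}=-\mv_{hom}$ shifts, and the appeal to \cite[(6.5)]{Nem2}---are exactly what the paper's paragraph alludes to. So your approach is the same as the paper's, only spelled out in more detail than the paper itself provides; there is nothing to correct, but note that in the end you, like the paper, are deferring the actual computation to \cite{Nem2} rather than reproving it from scratch.
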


Therefore, for a gem of an isolated plane curve singularity one gets $Sp_{\HVS}=Sp_{\MHS}$.
That means, that the Hodge spectrum can completely be described  from the (real) Seifert form of the link.
This is the model of our further investigation.

\subsection{Spectrum of a link and the Tristram--Levine signature}\label{ss:MHS-TL}
The Tristram--Levine signature (defined first in \cite{Tr,Le})
turn out to be a knot--theoretic counterpart of the spectrum of singular points. We recall
how can they be explicitly expressed from the spectrum of the link.

\begin{definition}\label{def:signature}
Let $L$ be a link and $S$ its Seifert matrix.
The \emph{Tristram--Levine signature} function is the mapping from
$S^1\setminus \{1\}=\{\zeta\in\mathbb{C}\colon|\zeta|=1,\zeta\neq 1\}$ to
$\mathbb{Z}$ given by
\[\sigma_L(\zeta)=\sign \left[(1-\zeta)S+(1-\ol{\zeta})S^T\right].\]
The \emph{nullity} $n_L(\zeta)$ is the nullity
of the same form $(1-\zeta)S+(1-\ol{\zeta})S^T$, while the
 \emph{normalized nullity}, $\nn_L(\zeta)$, is defined as $n_L(\zeta)-\Irr$.
For completeness we extend the definitions  for $\zeta=1$ too. First, we set  $\sigma_L(1)=0$.
Then notice that for any $\zeta\not=1$,
$\nn_L(\zeta)$ equals to the multiplicity of the root of $\cp$ at $\zeta$.
We define $\nn_L(1)$ by this characterization for $\zeta=1$.
\end{definition}

We have the following relation between H--numbers, signatures and  nullities of the link.

\begin{proposition}[\expandafter{\cite[(4.4.6) and (4.4.9)]{BN}}]\label{prop:sig}
Let $Sp=Sp_{\HVS}$ be the real part of the spectrum as defined in
Definition~\ref{def:spectrum}. Let $x\in(0,1)$ and $\zeta=e^{2\pi i x}$. Then
\begin{align*}
\sigma(\zeta)&=-|Sp\cap(x,x+1)|+|Sp\setminus[x,x+1]|+\sum_{n=1}^\infty\sum_{u=\pm 1}up^{2n}_\zeta(u)\\
\nn(\zeta)&=\sum_{k,u}p^k_\zeta(u).
\end{align*}
In particular,
\begin{equation}\label{eq:validonly}
-\sigma(\zeta)+\nn(\zeta)\ge |Sp\cap(x,x+1)|-|Sp\setminus[x,x+1]|.
\end{equation}
\end{proposition}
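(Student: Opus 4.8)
The plan is to reduce both equalities to a computation on each indecomposable summand of $\mv_L$ and then to deduce the inequality formally. The point is that all the ingredients are additive under direct sums of variation structures: the Hermitian pencil $(1-\zeta)S+(1-\ol{\zeta})S^T$ respects a block decomposition of the Seifert form, so $\sigma_L(\zeta)$ and the (unnormalized) nullity $n_L(\zeta)$ add; the irregularity $\Irr$ adds, hence so does $\nn_L(\zeta)$; and additivity of the counting functions $|Sp\cap(x,x+1)|$, $|Sp\setminus[x,x+1]|$ is built into Definition~\ref{def:spectrum}. Using the decomposition \eqref{eq:mvlsum}, it therefore suffices to verify the two displayed identities for a single summand of each type $\mv^{2k}_\lambda$ ($0<|\lambda|<1$) and $\mw^k_\lambda(u)$ ($|\lambda|=1$), and to sum.

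For a summand $\mv^{2k}_\lambda$ with $|\lambda|<1$ there is nothing to prove: the monodromy $h$ has no eigenvalue on $S^1$, so the pencil is nondegenerate for every $\zeta\in S^1\setminus\{1\}$, and since $\mv^{2k}_\lambda$ is \emph{hyperbolic} (the underlying $\varepsilon$-symmetric form has an $h$-invariant metabolizer pairing the $\lambda$- and $1/\lambda$-parts), its Tristram--Levine signature vanishes identically on $S^1\setminus\{1\}$; it also contributes $0$ to $\nn(\zeta)$ and, by Definition~\ref{def:spectrum}(a), nothing to the real spectrum $Sp$ (only to $ISp$). Hence both sides of both identities vanish on such summands, and we are left with the $\mw^k_\lambda(u)$.

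For a summand $\mw^k_\lambda(u)$, write $\lambda=e^{2\pi i\theta}$. Here I would invoke the explicit normal forms of $b,h,V$ from \cite{Nem2}, recover a Seifert matrix $S=V^{-1}h$, and diagonalize the Hermitian pencil $A(\zeta)=(1-\zeta)S+(1-\ol{\zeta})S^T$ as $\zeta=e^{2\pi ix}$ runs over $S^1\setminus\{1\}$. The outcome, which is the technical core of the argument, is: since $h$ is a single Jordan block, $\ker A(\zeta)$ is one-dimensional when $\zeta=\lambda$ and zero otherwise, matching $\sum_{k,u}p^k_\zeta(u)$ for this summand; and $\sigma_{\mw^k_\lambda(u)}(\zeta)$ equals $0$ when $k=2n$ and $\zeta\neq\lambda$, equals $u$ when $k=2n$ and $\zeta=\lambda$, and in the odd case $k=2n-1$ equals $u$ for $x$ on one side of $\theta$, $-u$ on the other, and $0$ at $\zeta=\lambda$. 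On the other hand, by Definition~\ref{def:spectrum}(a) this summand contributes to $Sp$ the numbers $\theta\in(0,1)$ and $\theta+1\in(1,2)$, with multiplicities $\tfrac{2n-1\mp u}{2}$ in the odd case and $n$, $n$ in the even case (for $\lambda=1$ these are the integers $1$ and $2$, and one uses that $1\in(x,x+1)$ and $2\notin[x,x+1]$ for every $x\in(0,1)$). A direct comparison of cases — $x<\theta$ versus $x>\theta$, $k$ even versus odd — shows that $-|Sp\cap(x,x+1)|+|Sp\setminus[x,x+1]|+\sum_{n=1}^\infty\sum_{u=\pm1}u\,p^{2n}_\zeta(u)$, evaluated on the single summand, reproduces exactly the signature value above, and likewise for the nullity; summing over all summands yields the two identities.

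Finally, the inequality \eqref{eq:validonly} is purely formal: substituting the two equalities gives
\[
-\sigma(\zeta)+\nn(\zeta)=|Sp\cap(x,x+1)|-|Sp\setminus[x,x+1]|+\Big(\sum_{k,u}p^k_\zeta(u)-\sum_{n,u}u\,p^{2n}_\zeta(u)\Big),
\]
and the last bracket equals $\sum_{n,u}p^{2n-1}_\zeta(u)+\sum_{n,u}(1-u)\,p^{2n}_\zeta(u)\ge 0$, since every $p^k_\zeta(u)\ge 0$ and $1-u\ge 0$ for $u=\pm1$. I expect the main obstacle to be the explicit signature computation for the $\mw^k_\lambda(u)$: one must track the eigenvalues of the Hermitian pencil $A(\zeta)$ through the collision at $\zeta=\lambda$, where the parity of the Jordan block and the Milnor-type sign invariant $u$ enter — and this is precisely what produces the two distinct cases ($k$ even versus odd, i.e.\ the correction term $\sum u\,p^{2n}_\zeta(u)$) in the stated formula.
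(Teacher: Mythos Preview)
Your approach is correct and is exactly the one carried out in \cite{BN}, from which the proposition is quoted: decompose $\mv_L$ into indecomposables via \eqref{eq:mvlsum}, use additivity of $\sigma$, $\nn$ and of the spectral counts, and verify the two identities on each $\mv^{2k}_\lambda$ and $\mw^k_\lambda(u)$ separately. Note that the present paper gives no proof of this proposition at all --- it is imported wholesale from \cite[(4.4.6) and (4.4.9)]{BN} --- so there is nothing to compare against here; your write-up is a sketch of the cited argument rather than an alternative to it.

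Two small points worth tightening. First, for a single block $\mw^k_\lambda(u)$ the kernel of the pencil $(1-\zeta)S+(1-\ol\zeta)S^T$ is one-dimensional at $\zeta=\ol\lambda$ rather than at $\zeta=\lambda$ (since the pencil factors as $S^T\bigl((1-\zeta)h+(1-\ol\zeta)\Id\bigr)$, whose kernel is the $\ol\zeta$-eigenspace of $h$); this is harmless because the structure is defined over $\Q$, so $p^k_\lambda(u)=p^k_{\ol\lambda}(u)$ and the formula $\nn(\zeta)=\sum_{k,u}p^k_\zeta(u)$ is insensitive to the conjugation. Second, your derivation of \eqref{eq:validonly} is fine; the bracket indeed equals $\sum_{n,u}p^{2n-1}_\zeta(u)+2\sum_n p^{2n}_\zeta(-1)\ge 0$.
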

\begin{remark}
In the cases $x\in\{0,1\}$, the inequality  \eqref{eq:validonly} still holds.
Indeed, the left hand side is non--negative, while the right hand side
is non--positive (since $Sp\setminus \Z$ is symmetric).
Moreover, if 1 is not a root of $\cp$, then \eqref{eq:validonly} is an equality for $x=1$.
\end{remark}
Let us denote
\[D=|Sp\cap\{x,x+1\}|\ge 0.\]
Assume that $\cp$, the characteristic polynomial of the link, has no roots outside the unit circle.
 Then $\deg \cp=|Sp|=|Sp\cap(x,x+1)|+|Sp\setminus[x,x+1]|+D$, hence one also has
 \begin{equation}\label{eq:NEW}
\deg\cp-\sigma(\zeta)+\nn(\zeta)=2|Sp\cap (x,x+1)|+\sum_{\substack{k \ odd\\u=\pm 1}}p^k_\zeta(u)
+\sum_{k\ even}2p^k_\zeta(-1)+D.
\end{equation}

For any $x\in [0,1]$, parallel to the set $Sp\cap (x,x+1)$, we will also consider the set
$Sp\setminus [x,x+1]=Sp\cap(0,x)+Sp\cap(1+x,2].$
These two types cover all the `length one open intervals' of the mod 2  spectrum.

\vspace{2mm}

The following corollary will be used extensively in the sequel.

\begin{corollary}\label{cor:univ}
Let $L$ be a link and $\cp$ its characteristic polynomial.
Assume that $\cp$
has no roots outside the unit circle.
If $\zeta=e^{2\pi ix}$ is not a root of $\cp$  then
\[|Sp\cap(x,x+1)|=\frac{1}{2}\left(\deg \cp-\sigma(\zeta)\right)
\ \ \mbox{and} \ \
|Sp\setminus[x,x+1]|=\frac12\left(\deg\cp+\sigma(\zeta)\right).\]
Moreover, for arbitrary $x\in[0,1]$:
\begin{equation}\label{eq:boundspec}\begin{split}
\frac{1}{2}\left(\deg\cp-\sigma(\zeta)+\nn(\zeta)\right)\ge |Sp\cap(x,x+1)|\\
\frac{1}{2}\left(\deg\cp+\sigma(\zeta)+\nn(\zeta)\right)\ge |Sp\setminus [x,x+1]|.
\end{split}\end{equation}
\end{corollary}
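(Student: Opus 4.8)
The plan is to deduce everything from Proposition~\ref{prop:sig} together with the counting identity for $\deg\cp$ that holds when $\cp$ has no roots off the unit circle. First I would dispose of the easy, equality part: if $\zeta=e^{2\pi ix}$ is not a root of $\cp$, then $\nn(\zeta)=0$, so by Definition~\ref{def:signature} all the nullity terms vanish; consequently in Proposition~\ref{prop:sig} every $p^k_\zeta(u)$ with $e^{2\pi ix}=\zeta$ is zero, and the formula for $\sigma(\zeta)$ collapses to $\sigma(\zeta)=-|Sp\cap(x,x+1)|+|Sp\setminus[x,x+1]|$. On the other hand, since $\cp$ has no roots outside (equivalently, by symmetry, on neither side of) the unit circle, the set $ISp$ is empty, so $|Sp|=\deg\cp$ by \eqref{cor:es}, and moreover $D=|Sp\cap\{x,x+1\}|=0$ because $x$ (hence $x+1$) is not a spectral number when $\zeta$ is not a root. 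Therefore $\deg\cp=|Sp\cap(x,x+1)|+|Sp\setminus[x,x+1]|$. Adding and subtracting this with the signature identity gives the two displayed equalities $|Sp\cap(x,x+1)|=\tfrac12(\deg\cp-\sigma(\zeta))$ and $|Sp\setminus[x,x+1]|=\tfrac12(\deg\cp+\sigma(\zeta))$.

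For the inequalities \eqref{eq:boundspec}, which must hold for \emph{every} $x\in[0,1]$ (including roots of $\cp$ and the endpoints), I would start from the inequality \eqref{eq:validonly} of Proposition~\ref{prop:sig}, namely $-\sigma(\zeta)+\nn(\zeta)\ge |Sp\cap(x,x+1)|-|Sp\setminus[x,x+1]|$, valid for $x\in(0,1)$ and, by the Remark following that proposition, for $x\in\{0,1\}$ as well. The point is to convert the right-hand side into $2|Sp\cap(x,x+1)|-\deg\cp$ (up to the nonnegative correction $D$). Since $\cp$ has no roots outside the unit circle, $\deg\cp=|Sp|=|Sp\cap(x,x+1)|+|Sp\setminus[x,x+1]|+D$ with $D\ge 0$; hence $|Sp\setminus[x,x+1]| = \deg\cp - |Sp\cap(x,x+1)| - D$. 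Substituting into \eqref{eq:validonly} yields $-\sigma(\zeta)+\nn(\zeta)\ge 2|Sp\cap(x,x+1)| - \deg\cp - D \ge 2|Sp\cap(x,x+1)| - \deg\cp$, which rearranges to the first line of \eqref{eq:boundspec}. The second line follows by the symmetry of $Sp\setminus\Z$ about $1$: replacing the role of $Sp\cap(x,x+1)$ by its complement, or equivalently applying the same computation after noting that \eqref{eq:validonly} with the opposite sign of $\sigma$ controls $|Sp\setminus[x,x+1]|-|Sp\cap(x,x+1)|$ — concretely, one has $\sigma(\zeta)+\nn(\zeta)\ge |Sp\setminus[x,x+1]|-|Sp\cap(x,x+1)|$ by the same argument with the explicit $\sigma$-formula of Proposition~\ref{prop:sig} (the $\sum up^{2n}_\zeta(u)$ term being bounded in absolute value by $\sum_{k,u}p^k_\zeta(u)=\nn(\zeta)$), and then the identity $|Sp\cap(x,x+1)| = \deg\cp - |Sp\setminus[x,x+1]| - D$ gives the claim.

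The only genuine subtlety — and the step I would treat most carefully — is the handling of the endpoint and root cases for the inequalities, i.e. making sure that \eqref{eq:validonly} (and its sign-flipped companion) really is available for all $x\in[0,1]$ and that the nullity term $\nn(\zeta)$ correctly absorbs the $\sum_{k,u} up^{2n}_\zeta(u)$ contributions; this is exactly where the Remark after Proposition~\ref{prop:sig} and Definition~\ref{def:signature}'s extension of $\sigma,\nn$ to $\zeta=1$ are needed. Everything else is bookkeeping with the partition $|Sp|=|Sp\cap(x,x+1)|+|Sp\setminus[x,x+1]|+D$ and the hypothesis $ISp=\emptyset$. I would also remark that the corollary is sharp: the equalities in the first display show that when $\zeta$ avoids the roots of $\cp$, the two inequalities \eqref{eq:boundspec} become equalities (with $\nn(\zeta)=0$), so \eqref{eq:boundspec} is the natural uniform statement covering both the generic and the degenerate values of $x$.
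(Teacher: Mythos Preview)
Your approach is essentially the paper's: the corollary is meant to be read off from Proposition~\ref{prop:sig}, the Remark after it, and the identity \eqref{eq:NEW}, exactly as you do. The equality part and the second inequality are fine.

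There is one arithmetic slip in your derivation of the first inequality of \eqref{eq:boundspec}. From $|Sp\setminus[x,x+1]|=\deg\cp-|Sp\cap(x,x+1)|-D$ one gets
\[
|Sp\cap(x,x+1)|-|Sp\setminus[x,x+1]|=2|Sp\cap(x,x+1)|-\deg\cp+D,
\]
with $+D$, not $-D$; your chain ``$\ge 2|Sp\cap(x,x+1)|-\deg\cp-D\ge 2|Sp\cap(x,x+1)|-\deg\cp$'' is therefore wrong on both counts (sign and direction). With the correct sign the argument still goes through, since $D\ge 0$ now sits on the right-hand side and can simply be dropped:
\[
\deg\cp-\sigma(\zeta)+\nn(\zeta)\ \ge\ 2|Sp\cap(x,x+1)|+D\ \ge\ 2|Sp\cap(x,x+1)|.
\]
This is exactly the content of \eqref{eq:NEW}, where the extra nonnegative terms $\sum_{k\ \mathrm{odd},u}p^k_\zeta(u)+\sum_{k\ \mathrm{even}}2p^k_\zeta(-1)+D$ are discarded. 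So your plan is correct once this sign is fixed.
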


\subsection{Morse theory of plane curves}\label{S:morseplane}
For any $\xi\in{\mathbb C}^2$ and $r>0$ let $B(\xi,r)$ be the ball centered at $\xi$ and with radius $r$,
also $S^3(\xi,r):=\partial B(\xi,r)$.
For an algebraic curve $C$ sitting in $\CC^2$,
we write $\normal{(C\cap B(\xi,r))}$ for the normalization of $C\cap B(\xi,r)$, and the genus
of $C\cap B(\xi,r)$ is the genus of its normalization.

For any  link $L$, we denote by  $c_L$ its number of components, and we set
\begin{align*}
w_L(\zeta)&:=-\sigma_L(\zeta)+1-c_L+n_L(\zeta)\\
-u_L(\zeta)&:=\sigma_L(\zeta)+1-c_L+n_L(\zeta).
\end{align*}

\begin{remark}
The convention used in  \cite{Bo} is that $n_L$ is the dimension of the kernel of
$(1-\zeta)S+(1-\ol{\zeta})S^T$ \emph{increased by $1$},
this explains the formal differences compared with \cite{Bo}.
\end{remark}

We will also fix $\zeta\in S^1\setminus\{1\}$.
Let us begin by citing a result from \cite{Bo}.
\begin{proposition}[\expandafter{\cite[Proposition~6.8]{Bo}}]\label{prop:morse-cite}
Let $\xi$ be a generic point of $\mathbb{C}^2$ and $r_0<r_1$ two values
such that the intersections $L_i:=C\cap S^3(\xi,r_i)$  are
transverse ($i=0,1$).  With the notations $c_i=c_{L_i}$,
$g_i=$ the genus of $C_i:=C\cap B(\xi,r_i)$ and $k_i=$ the number
of connected components of $\normal{C_i}$,
one has
\begin{equation}\label{eq:wbound}
\begin{split}
w_{L_1}(\zeta)-\sum w_{\LS_k}(\zeta)-w_{L_0}(\zeta)&\ge -2(g_1-g_0+c_1-c_0-k_1+k_0),\\
-\left(u_{L_1}(\zeta)-\sum u_{\LS_k}(\zeta)-u_{L_0}(\zeta)\right)&\ge -2(g_1-g_0+c_1-c_0-k_1+k_0),
\end{split}
\end{equation}
where $\LS_k$ are the links of singularities of $C$, which lie in $B(\xi,r_1)\setminus B(\xi,r_0)$.
\end{proposition}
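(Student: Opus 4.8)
We sketch the argument of \cite{Bo}.

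\textbf{Reduction to a cobordism inequality.} Let $\Sigma$ denote the smooth oriented surface obtained from $C\cap\bigl(B(\xi,r_1)\setminus B(\xi,r_0)\bigr)$ by removing small disjoint open balls around the singular points $p_1,\dots,p_\mu$ of $C$ lying in the open annulus; its oriented boundary is $L_1$ at the outer end and $-L_0\sqcup\bigsqcup_k(-\LS_k)$ at the inner end. The normalization of $C_1=C\cap B(\xi,r_1)$ decomposes, glued along circles, into the normalization of $C_0$, the normalizations of the germs $(C,p_k)$, and the (already smooth) surface $\Sigma$. Since $\chi$ is additive under such gluings, since $\chi(\normal{C_i})=2k_i-2g_i-c_i$, and since the normalization of a plane curve germ with link $\LS_k$ is a disjoint union of $c_{\LS_k}$ disks, one gets
\[
\chi(\Sigma)=2(k_1-k_0)-2(g_1-g_0)-(c_1-c_0)-\sum_{k}c_{\LS_k},
\]
hence $-2(g_1-g_0+c_1-c_0-k_1+k_0)=\chi(\Sigma)-\bigl((c_1-c_0)-\sum_k c_{\LS_k}\bigr)$. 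Substituting the definitions of $w_L$ and $u_L$ and this identity, both inequalities of \eqref{eq:wbound} become equivalent to the single estimate
\[
\Bigl(n_{L_1}(\zeta)-n_{L_0}(\zeta)-\sum_{k}n_{\LS_k}(\zeta)\Bigr)-\Bigl|\sigma_{L_1}(\zeta)-\sigma_{L_0}(\zeta)-\sum_{k}\sigma_{\LS_k}(\zeta)\Bigr|\ \ge\ \chi(\Sigma)+\mu ,
\]
which splits into the two one-sided inequalities obtained by tracking $n_L(\zeta)-\sigma_L(\zeta)$ and $n_L(\zeta)+\sigma_L(\zeta)$ separately.

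\textbf{Morse theory.} Apply Morse theory to the proper function $z\mapsto\|z-\xi\|^2$ restricted to $C$ over the radius range $[r_0,r_1]$; for generic $\xi$ it is Morse on the smooth locus with distinct critical values, and the slices $C\cap S^3(\xi,r)$ fail to be transverse precisely at the radii of the $p_k$. Because $\|z-\xi\|^2$ is strictly subharmonic on the smooth locus of the complex curve $C$, it has no local maxima there, so the only critical events as $r$ grows are: (i) birth of a split unknot at an index $0$ critical point, under which $\sigma_L(\zeta)$ is unchanged and $n_L(\zeta)$ increases by $1$, so $n_L(\zeta)\mp\sigma_L(\zeta)$ increases by $1$; (ii) a band (saddle) move at an index $1$ critical point; and (iii) a ``singular move'' $L_{r^-}\to L_{r^+}$ when $r$ crosses the critical radius of a $p_k$, namely the transition cut out by slicing the local cone on $\LS_k$ by the nearly linear height function determined near $p_k$ by the direction to $\xi$. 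The link--theoretic heart of the proof is the Murasugi type inequality of \cite{Bo} controlling a single band move: it changes each of $n_L(\zeta)-\sigma_L(\zeta)$ and $n_L(\zeta)+\sigma_L(\zeta)$ by at least $-1$ (equivalently $n_{L^+}(\zeta)-n_{L^-}(\zeta)\ge|\sigma_{L^+}(\zeta)-\sigma_{L^-}(\zeta)|-1$), proved by analysing the effect of one band surgery on the Seifert form of the link. For a move of type (iii), the local cone over $\LS_k$ realizes a cobordism whose normalization is $c_{\LS_k}$ disks, and the corresponding estimate onto $\LS_k$, also established in \cite{Bo}, accounts for exactly the renormalizing terms $n_{\LS_k}(\zeta)\mp\sigma_{\LS_k}(\zeta)$.

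\textbf{Telescoping.} Summing the contributions of all critical events along $[r_0,r_1]$, a count of Euler characteristics assembles the birth and band contributions together with those of the $\mu$ singular moves into the right-hand side $\chi(\Sigma)+\mu$, which proves the reduced estimate; unwinding the reduction yields both inequalities of \eqref{eq:wbound}. The main obstacle is step (iii): for a \emph{generic} center $\xi$ one must verify that the family of slices of $C$ in a small ball around a singular point is genuinely a controlled sequence of elementary moves with the asserted behaviour of signature and nullity, and one must keep the signs consistent between the $w$- and the $u$-version, i.e. between $\zeta$ and $\ol{\zeta}$. Both points are carried out in \cite{Bo}.
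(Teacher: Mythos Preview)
The paper does not actually prove this proposition: it is quoted verbatim from \cite[Proposition~6.8]{Bo} and no argument is supplied in the text. Your sketch is a reasonable outline of the Morse-theoretic proof given there, and your Euler-characteristic bookkeeping in the reduction step is correct.

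What the paper \emph{does} add is a remark (placed after the proof of the subsequent corollary) pointing out that the Morse decomposition is not really needed. One can view the surface $C_{01}=C\cap\bigl(B(\xi,r_1)\setminus B(\xi,r_0)\bigr)$ directly as a cobordism between $L_1$ and $L_0':=L_0\amalg\LS_1\amalg\dots\amalg\LS_j$ and apply the classical Murasugi inequality \cite[Theorem~12.3.1]{Kaw-book} once to this cobordism. This replaces your steps (ii), (iii) and the telescoping by a single global estimate, and in particular shows that the complex structure of $C$ plays no role beyond fixing the genus of the cobordism. In effect, your first paragraph (``Reduction to a cobordism inequality'') already sets up this shortcut; the subsequent Morse argument is the longer route of \cite{Bo}.

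One small quibble: your closing comment that the $w$- and $u$-versions correspond to ``$\zeta$ and $\ol{\zeta}$'' is not quite right. Both inequalities are taken at the same $\zeta$; the difference is only the sign in front of $\sigma_L(\zeta)$ in the definitions of $w_L$ and $u_L$. Since the Hermitian form $(1-\zeta)S+(1-\ol{\zeta})S^T$ has real signature, $\sigma_L(\ol{\zeta})=\sigma_L(\zeta)$ anyway, so no separate $\ol{\zeta}$-analysis is needed.
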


We use Proposition~\ref{prop:morse-cite} in two special cases.
\begin{corollary}\label{cor:smooth-morse}
Let $C_0$ and $C_1$ be as in \ref{prop:morse-cite}.
If $C_{01}=C_1\setminus C_0$ is smooth then
\begin{align*}
-\sigma_{L_1}(\zeta)+n_{L_1}(\zeta)-(-\sigma_{L_0}(\zeta)+n_{L_0}(\zeta))&\ge \chi(C_{01}).\\
\sigma_{L_1}(\zeta)+n_{L_1}(\zeta)-(\sigma_{L_0}(\zeta)+n_{L_0}(\zeta))&\ge \chi(C_{01}).
\end{align*}
\end{corollary}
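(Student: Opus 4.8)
The plan is to derive the corollary directly from Proposition~\ref{prop:morse-cite}: the smoothness hypothesis on $C_{01}$ makes the singular contributions disappear, and the resulting right--hand side is exactly an Euler characteristic.

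First I would observe that, since $C_{01}=C_1\setminus C_0$ is smooth, the closed shell $B(\xi,r_1)\setminus B(\xi,r_0)$ contains no singular point of $C$; hence the family of links of singularities $\{\LS_k\}$ in Proposition~\ref{prop:morse-cite} is empty and the sums $\sum_k w_{\LS_k}(\zeta)$, $\sum_k u_{\LS_k}(\zeta)$ vanish. Thus \eqref{eq:wbound} reduces to
\[
w_{L_1}(\zeta)-w_{L_0}(\zeta)\ \ge\ -2\bigl(g_1-g_0+c_1-c_0-k_1+k_0\bigr)
\]
and to the analogous inequality for $-u_{L_1}(\zeta)+u_{L_0}(\zeta)$. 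Next I would substitute the definitions $w_L(\zeta)=-\sigma_L(\zeta)+1-c_L+n_L(\zeta)$ and $-u_L(\zeta)=\sigma_L(\zeta)+1-c_L+n_L(\zeta)$: the constant $1$'s cancel, and after moving $c_1-c_0$ to the right the first inequality becomes
\[
\bigl(-\sigma_{L_1}(\zeta)+n_{L_1}(\zeta)\bigr)-\bigl(-\sigma_{L_0}(\zeta)+n_{L_0}(\zeta)\bigr)\ \ge\ -2(g_1-g_0)-(c_1-c_0)+2(k_1-k_0),
\]
and the second is the same with $\sigma$ replaced by $-\sigma$ throughout.

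Finally I would identify the right--hand side above with $\chi(C_{01})$. By transversality of $C$ with the spheres $S^3(\xi,r_i)$, the surface $C_{01}$ is compact and smooth with $\partial C_{01}=L_0\sqcup L_1$, and $\normal{C_1}$ is obtained from $\normal{C_0}$ by gluing $C_{01}$ along $L_0$, since the normalization map is an isomorphism near the nonsingular transverse boundary. As $\chi(L_0)=0$, this gives $\chi(C_{01})=\chi(\normal{C_1})-\chi(\normal{C_0})$; and since $\partial\normal{C_i}\cong L_i$ consists of $c_i$ circles, $\chi(\normal{C_i})=2k_i-2g_i-c_i$, whence
\[
\chi(C_{01})=2(k_1-k_0)-2(g_1-g_0)-(c_1-c_0),
\]
which is precisely the right--hand side obtained above. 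Both inequalities follow.

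I do not expect a genuine obstacle: the corollary is essentially a bookkeeping reformulation of Proposition~\ref{prop:morse-cite}. The only step deserving care is the last one, namely checking that $\partial\normal{C_i}$ is identified with $L_i$ (so that the count of boundary circles, and hence the formula $\chi(\normal{C_i})=2k_i-2g_i-c_i$, is correct) and that the normalization is compatible with the splitting $C_1=C_0\cup_{L_0}C_{01}$; both follow from the transversality assumption together with the fact that all singular points of $C_1$ lie in the interior of $C_0$.
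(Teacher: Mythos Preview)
Your proposal is correct and follows exactly the route the paper indicates: plug the definitions of $w_L$ and $-u_L$ into \eqref{eq:wbound} with the singular sums gone, and then identify the right--hand side with $\chi(C_{01})$ using that $\normal{C_{01}}=C_{01}$ (which underlies your splitting $\normal{C_1}=\normal{C_0}\cup_{L_0}C_{01}$ and the formula $\chi(\normal{C_i})=2k_i-2g_i-c_i$). The paper's proof is a one--line reference to these same ingredients; you have simply written out the bookkeeping in full.
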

\begin{proof}
Use the definition of $w$, \eqref{eq:wbound} and  $\normal{C_{01}}=C_{01}$ for the first inequality. For the second one
we use $-u$ instead of $w$.
%\[-\sigma_{L_1}(\zeta)+n_{L_1}(\zeta)-(-\sigma_{L_2}+n_{L_2})(\zeta)\ge -2(g_1-g_0)-c_0+c_1=\chi(C_{01}).\]
\end{proof}

The other important application is if $r_0$ is small, so that $L_0$ is an unknot.

\begin{proposition}
Fix  $r$ such that the intersection $C\cap S(\xi,r)$ is transverse, and set $L:=C\cap S(\xi,r)$.
Let $C_{smooth}$ be the smoothing of $C\cap B(\xi,r)$ (e.g. if $C$ is given
by $F^{-1}(0)$ for some reduced polynomial, then $C_{smooth}$ can be taken
as $F^{-1}(\varepsilon)\cap B(\xi,r)$ for $\varepsilon$ sufficiently small). Let $z_1,\dots,z_k$
be the singular points of $C\cap B(\xi,r)$ with links    $\LS_1,\dots,\LS_k$,
Milnor numbers $\mu_1,\dots,\mu_k$, number of branches
$c_1,\dots,c_k$, and signatures $\sigma_1(\zeta),\dots,\sigma_k(\zeta)$. Then
\begin{equation}\label{eq:diff-r0}
\begin{split}
-\sigma_{L}(\zeta)+n_{L}(\zeta)+(1-\chi(C_{smooth}))&\ge \sum_{j=1}^{k}\left(-\sigma_{\LS_j}(\zeta)
+n_j(\zeta)+\mu_j\right)\\
\sigma_{L}(\zeta)+n_{L}(\zeta)+(1-\chi(C_{smooth}))&\ge \sum_{j=1}^{k}\left(\sigma_{\LS_j}(\zeta)
+n_j(\zeta)+\mu_j\right).
\end{split}
\end{equation}
\end{proposition}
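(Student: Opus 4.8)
The plan is to derive \eqref{eq:diff-r0} from Proposition~\ref{prop:morse-cite} (equivalently Corollary~\ref{cor:smooth-morse}) by comparing the situation at radius $r$ with the situation at a very small radius $r_0$, after replacing the singular curve $C\cap B(\xi,r)$ by its smoothing. First I would take $r_0>0$ small enough that $L_0:=C\cap S(\xi,r_0)$ is an unknot (or, if $\xi$ is not on $C$, is empty) and that $B(\xi,r_0)$ contains no singular point of $C$; for the unknot one has $\sigma_{L_0}(\zeta)=0$ and $n_{L_0}(\zeta)=0$ for $\zeta\neq 1$, so both left-hand sides at radius $r_0$ vanish. Then I would apply Corollary~\ref{cor:smooth-morse} to the pair consisting of the unknot $L_0$ at the inside and $L=C\cap S(\xi,r)$ at the outside, but with $C$ replaced near each singular point $z_j$ by its local Milnor fiber, i.e. working with the smoothed curve $C_{smooth}$ inside $B(\xi,r)$; the region between $S(\xi,r_0)$ and $S(\xi,r)$ is then smooth, so the hypothesis of Corollary~\ref{cor:smooth-morse} applies and gives
\[
-\sigma_{L}(\zeta)+n_{L}(\zeta)\ge \chi(C_{smooth}\setminus B(\xi,r_0))
\]
together with the companion inequality with $+\sigma$.

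Next I would do the Euler-characteristic bookkeeping. Since $B(\xi,r_0)$ meets $C_{smooth}$ in a disk (or not at all), $\chi(C_{smooth}\setminus B(\xi,r_0))=\chi(C_{smooth}\cap B(\xi,r))-1$ (resp. $\chi(C_{smooth})$ if empty), so the right-hand side is $\chi(C_{smooth})-1=-(1-\chi(C_{smooth}))$. This already yields an inequality of the desired shape but with the right-hand side expressed globally. To split it into the per-singularity contributions on the right of \eqref{eq:diff-r0}, I would invoke the additivity of the signature and nullity under the relevant surgeries — concretely, re-running Proposition~\ref{prop:morse-cite} on the annular shells isolating each $z_j$ one at a time, so that the correction term $\sum_k w_{\LS_k}$ (resp.\ $\sum_k u_{\LS_k}$) records exactly the links $\LS_j$ of the singularities swept out. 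The key classical input is that for the link $\LS_j$ of a plane curve singularity with Milnor number $\mu_j$ and $c_j$ branches, the Seifert form is $\mu_j\times\mu_j$ and unimodular, and one has the standard relations expressing $w_{\LS_j}(\zeta)=-\sigma_{\LS_j}(\zeta)+1-c_j+n_j(\zeta)$ while the genus-type defect for the smoothing is governed by the formula $2g+c-1 = \mu$ (equivalently $1-\chi(\text{Milnor fiber})=\mu-c+1$), which is precisely what converts the $2(g_1-g_0+c_1-c_0-k_1+k_0)$ term of \eqref{eq:wbound} into the $\mu_j$ appearing in \eqref{eq:diff-r0}.

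Assembling: after the shell-by-shell application of \eqref{eq:wbound}, the inequality reads
\[
w_L(\zeta) - \sum_{j=1}^k w_{\LS_j}(\zeta) - w_{L_0}(\zeta) \ge -2\bigl(g-g_0+c-c_0-k+k_0\bigr),
\]
and substituting $w_{L_0}=0$, expanding $w$ via its definition, and rewriting the right-hand side through the Milnor-number identities gives exactly the first line of \eqref{eq:diff-r0}; using $-u$ in place of $w$ gives the second line. I expect the main obstacle to be the careful accounting of the topology of $C_{smooth}$ versus $C\cap B(\xi,r)$ — that is, keeping track of how the genus, number of components $k_i$, and component counts $c_i$ of the intermediate links change as one passes each singular shell and performs the local smoothing, and checking that the combinatorial terms $g_1-g_0+c_1-c_0-k_1+k_0$ telescope correctly into $\sum_j(\mu_j-\text{something})$ so that precisely $\mu_j$ survives next to each $-\sigma_{\LS_j}(\zeta)+n_j(\zeta)$. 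The signature/nullity additivity itself is already packaged in Proposition~\ref{prop:morse-cite}, so no genuinely new analytic input is needed; the work is entirely in the Euler-characteristic and branch-count bookkeeping.
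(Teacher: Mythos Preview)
Your proposal eventually lands on the paper's approach, but the route is muddled and the first step is a dead end. Applying Corollary~\ref{cor:smooth-morse} to the \emph{smoothed} curve $C_{smooth}$ between radii $r_0$ and $r$ yields only the trivial bound $-\sigma_L(\zeta)+n_L(\zeta)+(1-\chi(C_{smooth}))\ge 0$: the annulus carved out of $C_{smooth}$ is smooth, so no $\LS_j$ terms appear, and there is nothing to ``split'' into per-singularity contributions. The inequality you want is strictly stronger (each summand $-\sigma_j+n_j+\mu_j$ is nonnegative), and it cannot be recovered from this trivial bound by any bookkeeping.

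The correct move---which is exactly what your ``Assembling'' paragraph writes down, and exactly what the paper does---is a \emph{single} application of \eqref{eq:wbound} to the \emph{singular} curve $C$ from $r_0$ to $r$. Then the $\LS_j$ terms are already present, and the right-hand side involves the genus $g(C)$ and component count $k_1$ of the \emph{normalization} of $C\cap B(\xi,r)$, not of $C_{smooth}$. No shell-by-shell iteration is needed. The remaining bookkeeping is to convert $g(C)$ into $g(C_{smooth})$ via the genus formula $2\bigl(g(C_{smooth})-g(C)\bigr)=\sum_j(\mu_j+c_j-1)$, and then to pass to $1-\chi(C_{smooth})$ using $b_0(C_{smooth})\le k_1$. (Incidentally, your parenthetical ``$1-\chi(\text{Milnor fiber})=\mu-c+1$'' is off: the local Milnor fiber is connected with $b_1=\mu$, so $1-\chi=\mu$; it is $2g$ that equals $\mu-c+1$.)
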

\begin{proof}
We prove only the first part, in the second one we use $-u_L$ instead of $w_L$.

Let $r_{min}$ be  minimal with $C\cap S(\xi,r)$ non-empty, and set
$r_0:=r_{min}+\varepsilon$ for $\varepsilon$ sufficiently small.
Then $L_0$ is an unknot with $w_{L_0}(\zeta)\equiv 0$, $c_0=k_0=1$, thus
\eqref{eq:wbound} gives
\begin{multline*}
-\sigma_{L}(\zeta)+n_{L}(\zeta)+1-c_L\ge
\\
\ge \sum_{j=1}^{k}(-\sigma_j(\zeta)+n_j(\zeta)+\mu_j)-\sum_{j=1}^{k}(\mu_j+c_j-1)-2g(C)-2c_L+2k_1.
\end{multline*}
The proof is completed by applying the genus formula
$2(g(C_{smooth})-g(C))=\sum_{j=1}^k(\mu_j+c_j-1)$, the fact that $b_1(C_{smooth})=2g(C_{smooth})+c_L-1$ and observing that $b_0(C_{smooth})\le 2k_1$ (it is even
bounded by $k_1$ alone).
\end{proof}

\begin{remark}
The cited result (i.e. Proposition~\ref{prop:morse-cite}) does not really require
Morse theoretical arguments, although they are very convenient.
We could deduce it --- with approximately the same amount of work
--- from the Murasugi inequality \cite[Theorem~12.3.1]{Kaw-book} too.
The argument is that $C_{01}=C\cap(B(\xi,r_1)\setminus B(\xi,r_0))$ induces
a cobordism between the links $L_0':=L_0\amalg\LS_1\amalg\ldots\amalg\LS_j$ and $L_1$. In this way we
do not use anywhere that $C$ is a complex curve, only that its genus is the difference of the genera of the
minimal Seifert surfaces of $L_1$ and $L_0'$.
\end{remark}

\section{The Seifert form and the MHS  of a polynomial at infinity}\label{s:HVS-full}
In this section we compare the Hodge--spectrum associated with the limit mixed Hodge structure of
a polynomial map  at infinity with the HVS--spectrum provided by its regular link at infinity.
In this way {\it we recover the Hodge--spectrum from the `Seifert form at infinity'}.
For results concerning the limit mixed Hodge structure (MHS)
 and the Hodge spectrum at infinity the reader might consult
\cite{SSS,Di,Br}.

\subsection{Basic definitions}\label{S:basdef2}
Let $F\colon\mathbb{C}^2\to\mathbb{C}$ be a reduced polynomial with critical values
 $x_1,\dots,x_N$. Since
$\mathbb{C}^2$ is not compact, the topology of a fiber $F^{-1}(y)$ can vary
even if $y$ changes in a set of regular values of $F$.

\begin{definition}(\cite{Neu1})
The fiber $F^{-1}(c)$ is called \emph{regular at infinity} if
there exists a (small) disk $D\ni c$ in $\mathbb{C}$ and a (large) ball
$B\subset\mathbb{C}^2$ such that $F$ restricted to
$F^{-1}(D)\setminus B$ is a trivial fibration.
The fiber is called {\it irregular at infinity}
if it is not regular at infinity.
\end{definition}
Consider all the values $y_1,\dots,y_M$ such that $F^{-1}(y_k)$
is not regular at infinity. Set
$\rho\in\mathbb{R}$ with $\rho>\max_{k,l}\{ |x_k|,|y_l|\}$  and set
$\gamma=\{z\in\mathbb{C}\colon |z|=\rho\}$.
Then $F$ restricted to $F^{-1}(\gamma)$ is a locally trivial fibration, called the
{\it fibration of $F$ at infinity}. It will be denoted  $\fbi$.
The fiber of $\fbi$ is the (generic) fiber $Y_\infty:=F^{-1}(\rho)$ of $F$.
The induced algebraic monodromy over $\gamma$, called
the \emph{monodromy  of $F$ at infinity}, will be denoted by
 \begin{equation}\label{eq:moninf}
 h_\infty\colon H_1(Y_\infty,\mathbb{Z})\to H_1(Y_\infty,\mathbb{Z}).\end{equation}
Furthermore, we consider on $H_1(Y_\infty,\mathbb{Z})$ the intersection form $b_\infty$ too.
Already this {\it isometric structure} $(H_1(Y_\infty);b_\infty,h_\infty)$
 contains important information about the behavior of $F$ at infinity, nevertheless,
we will enhance it  in two different
ways. The first  is topological: we investigate the possibility to extend the pair $(b_\infty,h_\infty)$ to
a variation structure (this, strictly speaking, in general, only `partially' is
 possible). The candidate for the variation operator
is the inverse transpose of the Seifert matrix of the link at infinity.
The second is algebraic: one lifts the pair  $(b_\infty,h_\infty)$ to the level of a polarized mixed Hodge structure by
considering the limit mixed Hodge structure of $F$ at infinity.
%In this section we analyze both extensions, and finally we will connect them showing that, in fact, the Seifert matrix
%of the regular link at infinity determines the spectral pairs  of the limit mixed Hodge structure.

\vspace{2mm}

First we start with the topological part.

Fix a fiber $F^{-1}(c)$ which is regular at infinity. For sufficiently large $R$ the intersection $F^{-1}(c)$
with $\partial B(R)$ is transverse. This link $F^{-1}(c)\cap\partial B(R)\subset \partial B(R)$, denoted by $\reg{L}$, is
independent (up to isotopy) of  $R$ and $c$.
%\begin{definition}
%$\reg{L}\subset \partial B(R)$
It  is called the \emph{regular link at infinity} of $F$.
%\end{definition}

According to \cite[Theorem~5]{Neu1} we can associate with $\reg{L}$ the so--called
fundamental {\it multilink}  at infinity $L_{fund}$, which is fibered. This means the following:
there exist a link $L_{fund}$ with components $\{L_{fund,i}\}_{i=1}^\nu$ and positive
multiplicities ${\bf n}=\{n_i\}_{i=1}^\nu$ such that there is a fibration $\phi:S^3\setminus L_{fund}
\to S^1$ with the following property:
for any closed loop  $\tau\in S^3\setminus L_{fund}$,
$\phi_*([\tau])\in H_1(S^1)={\mathbb Z}$ equals the linking number of $[\tau]$ with
$\sum_in_iL_{fund,i}$. Furthermore, the closure $\overline{Y_t}$ of
the fiber $Y_t=\phi^{-1}(e^{2\pi it})$ ($t\in [0,1]$)
is not a manifold with boundary, but  homologically $\overline{Y_t}\setminus Y_t$
is the multilink $\sum_in_iL_{fund,i}$.

Finally, the connection between the multilink $L_{fund}$ and the link at infinity $\reg{L}$
is the following. Let $T=T(L_{fund})$ be a closed small tubular neighbourhood of $L_{fund}$.
Then $\reg{L}$  is the intersection of a fiber $Y_0$ with $\partial\, T(L_{fund})$.

\begin{lemma}\label{lemma:n_i}
For any  $i\in\{1,\ldots,\nu\}$, let $l_i$ be the linking number
\[l_i=\lk(L_{fund,i},\sum_{j\not=i}n_jL_{fund,j})\]
and $n_i'$ the (positive) greatest common divisor
of $n_i$ and $l_i$.   Then the number of components of
%$\reg{L}$ in the tubular neighborhood
%of $L_{fund,i}$, i.e. the number of  connected components of
$Y_0\cap \partial\, T(L_{fund,i})$
is exactly $n_i'$. Hence, $\reg{L}$ has $\sum_{i=1}^\nu n_i'$ components.
Moreover, the components of $Y_0\cap \partial\, T(L_{fund,i})$ are cyclically permuted by the
geometric monodromy of $\phi$.
\end{lemma}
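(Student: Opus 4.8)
The plan is to analyze the intersection $Y_0 \cap \partial T(L_{fund,i})$ directly using the fibration structure $\phi$ restricted to the boundary torus of a tubular neighbourhood of a single component. First I would restrict attention to one component $L_{fund,i}$; its tubular neighbourhood $T(L_{fund,i})$ has boundary a torus $\mathbb{T}_i \cong S^1 \times S^1$, with one factor a meridian $m_i$ and the other a longitude $\ell_i$ (the $0$-framed longitude using the Seifert framing in $S^3$). On this torus, the fibration map $\phi\colon S^3 \setminus L_{fund} \to S^1$ restricts to a map $\mathbb{T}_i \to S^1$, whose homotopy class is determined by what it does on $\pi_1(\mathbb{T}_i) = \mathbb{Z} m_i \oplus \mathbb{Z}\ell_i$. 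By the defining property of $\phi$ (linking number with $\sum_j n_j L_{fund,j}$), the meridian $m_i$ maps to $n_i \in \mathbb{Z} = H_1(S^1)$, while the longitude $\ell_i$ maps to $l_i = \lk(L_{fund,i}, \sum_{j\neq i} n_j L_{fund,j})$.

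Next I would identify $Y_0 \cap \mathbb{T}_i$ with a fiber of this restricted map $\phi|_{\mathbb{T}_i}\colon \mathbb{T}_i \to S^1$. The preimage of a regular value under a map $\mathbb{T}^2 \to S^1$ given in coordinates by $(x,y)\mapsto n_i x + l_i y \bmod 1$ is a disjoint union of parallel $(1)$-dimensional circles on the torus, and the number of connected components is precisely $\gcd(n_i, l_i) = n_i'$: indeed the level set is $\{n_i x + l_i y \equiv c\}$, and the number of components equals the index of the image subgroup $n_i\mathbb{Z} + l_i\mathbb{Z} = n_i'\mathbb{Z}$ appropriately interpreted — more carefully, the kernel of the composite $\mathbb{Z}^2 \to \mathbb{Z} \to \mathbb{Z}/n_i'$ has a direct summand complement, and counting components of the level set of a linear torus map reduces to this elementary lattice computation. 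So $Y_0 \cap \partial T(L_{fund,i})$ has exactly $n_i'$ components, and summing over $i$ gives that $\reg{L}$ has $\sum_{i=1}^\nu n_i'$ components. The last assertion, that the geometric monodromy of $\phi$ cyclically permutes these $n_i'$ components, follows because the monodromy is the return map of the flow transverse to the fibers; on $\mathbb{T}_i$ this flow moves each level circle to the next, and after going once around $S^1$ the induced permutation on the $n_i'$ components is a cyclic shift (it is the deck transformation of the $n_i'$-fold cyclic cover implicit in the index computation).

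I would need to be slightly careful about two technical points. First, the identification of $Y_0$ near $L_{fund}$: the excerpt states $\reg{L} = Y_0 \cap \partial T(L_{fund})$ and that $\overline{Y_0}\setminus Y_0$ is the multilink $\sum n_i L_{fund,i}$, so near $L_{fund,i}$ the closed fiber limits onto that component with multiplicity $n_i$; this is exactly what forces $\phi|_{\mathbb{T}_i}$ to wrap $n_i$ times around $S^1$ in the meridian direction, and I should phrase the meridian computation as a consequence of the multilink fibration condition rather than reproving it. Second, I must make sure the framing used to define $\ell_i$ is consistent with the one implicit in the linking number $l_i$ (the standard Seifert framing in $S^3$), so that $\phi_*(\ell_i) = l_i$ genuinely holds; this is where I would invoke the linking-number characterization of $\phi$ once more.

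The main obstacle, I expect, is making the combinatorial/topological count of components of a level set of a linear map $\mathbb{T}^2 \to S^1$ fully rigorous while simultaneously tracking the monodromy action — it is elementary but needs care to avoid off-by-one or orientation errors, and to correctly handle the degenerate-looking cases (e.g. $l_i = 0$, where $n_i' = n_i$ and the level circles are all meridional-type curves, or $n_i' = 1$, where $Y_0$ meets the torus in a single connected curve). Everything else is a direct unwinding of Neumann's description of the fundamental multilink at infinity together with the standard fact that the regular link at infinity is the boundary of the fiber of $\phi$ cut off near $L_{fund}$.
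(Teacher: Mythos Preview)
Your proposal is correct and is essentially the standard argument found in Eisenbud--Neumann \cite[\S 3 and 4]{EN}, which is precisely what the paper invokes as its entire proof of this lemma. Since the paper gives no independent argument beyond that citation, your direct computation on the boundary torus (meridian $\mapsto n_i$, longitude $\mapsto l_i$, hence $\gcd(n_i,l_i)=n_i'$ fiber components cyclically permuted by the deck transformation) is exactly the content being deferred to, and your noted edge cases ($l_i=0$, $n_i'=1$) require no special treatment in this uniform description.
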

\begin{proof}
See \cite[\S\,3 and 4]{EN}.
\end{proof}

Another important point about $L_{fund}$ is that its fiber $Y_0$ can be identified with
the generic fiber $Y_\infty$ of the polynomial $F$ \cite[Theorem 4]{Neu1}. In fact, by
\cite[Theorem~1.1]{AC} one has

\begin{lemma}\label{prop:hvs1}
The multilink fibration $S^3\setminus L_{fund}\to S^1$
associated with $(L_{fund},{\bf n})$ and the fibration $Fib_\infty$ of $F$ are isomorphic.
\end{lemma}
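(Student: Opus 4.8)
The plan is to show that both fibrations in the statement represent the same element in the appropriate classification of fibered (multi)links, so that the isomorphism follows from uniqueness. First I would recall the setup: the multilink $(L_{fund},{\bf n})$ is Neumann's fundamental multilink at infinity, and by \cite[Theorem~5]{Neu1} the complement $S^3\setminus L_{fund}$ fibers over $S^1$ via $\phi$, with fiber $Y_0$. On the other side, the fibration $\Fib_\infty$ of $F$ at infinity is defined as $F|_{F^{-1}(\gamma)}$ with $\gamma$ a large circle, and its total space is $F^{-1}(\gamma)$, sitting inside a large ball $B(R)$; its fiber is the generic affine fiber $Y_\infty$. I would first explain why $F^{-1}(\gamma)$ can be identified with $S^3\setminus \mathrm{int}\,T(L_{fund})$: the regular link at infinity $\reg{L}$ is by construction $Y_0\cap \partial T(L_{fund})$, and for $R$ large the affine fiber $F^{-1}(c)$ meets $\partial B(R)$ in exactly this link. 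So passing from $\partial B(R)$ inward along $F^{-1}(\gamma)$ corresponds precisely to passing from $\partial T(L_{fund})$ outward along the fibers $Y_t$ of $\phi$; both are the "solid-torus complement" sides of the same splitting of $S^3$.

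Next I would invoke the splice/JSJ description. By Neumann--Rudolph (and the Eisenbud--Neumann book \cite{EN}) the link at infinity together with its fibration is determined by the splice diagram of $F$ at infinity, and Neumann \cite[Theorem~4]{Neu1} already asserts that the fiber $Y_0$ of the multilink fibration is diffeomorphic to the generic fiber $Y_\infty$ of $F$ compatibly with the monodromy. To upgrade this fiber-level statement to an isomorphism of fibrations, I would appeal directly to \cite[Theorem~1.1]{AC} of Artal--Cassou-Nogu\`es, which is stated precisely for this comparison: the geometric monodromy of the multilink fibration and the geometric monodromy of $\Fib_\infty$ agree, and hence the two fibrations (as fiber bundles over $S^1$, or equivalently as open-book-type structures on the respective manifolds) are isomorphic. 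Concretely, an isomorphism of fibrations is a diffeomorphism of total spaces carrying one projection to the other up to rotation of $S^1$; I would produce it by combining (i) the identification of total spaces $F^{-1}(\gamma)\cong S^3\setminus\mathrm{int}\,T(L_{fund})$ described above, (ii) the identification of fibers $Y_\infty\cong Y_0$ from \cite{Neu1}, and (iii) the compatibility of the two monodromy mapping classes from \cite{AC}, which lets one glue the fiberwise identification into a genuine bundle isomorphism.

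The main obstacle, and the step deserving care, is (iii): making sure the monodromy mapping classes are identified \emph{as mapping classes}, not merely that the fibers are abstractly diffeomorphic. The fiber $Y_0$ is non-compact (homologically its end is the multilink $\sum_i n_i L_{fund,i}$), so "the monodromy" has to be understood with appropriate control near the ends, and one must check the diffeomorphism $Y_\infty\cong Y_0$ can be chosen to intertwine the two monodromies. This is exactly the content of \cite[Theorem~1.1]{AC}, so in the write-up I would simply quote it; the only thing left to verify by hand is the compatibility of the boundary/tubular-neighbourhood identifications, i.e. that the number and cyclic structure of the components of $Y_0\cap\partial T(L_{fund,i})$ match $\reg{L}$, which is precisely Lemma~\ref{lemma:n_i}. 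Thus the proof reduces to: cite \cite{Neu1} for $Y_0\simeq Y_\infty$, cite \cite{AC} for monodromy compatibility, and use Lemma~\ref{lemma:n_i} together with the definition of $\reg{L}$ as $Y_0\cap\partial T(L_{fund})$ to see that the ambient identification is consistent. I would then conclude that $\Fib_\infty$ and the multilink fibration of $(L_{fund},{\bf n})$ are isomorphic.
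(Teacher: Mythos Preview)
Your proposal is correct and rests on the same key ingredient as the paper: the lemma is taken directly from \cite[Theorem~1.1]{AC}, which the paper simply cites without further argument. The additional scaffolding you provide (identifying total spaces, invoking Lemma~\ref{lemma:n_i}, matching monodromies) is not needed for the paper's purposes, though it is not incorrect.
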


By \cite[page 37]{EN}, $Y_0$ has $d=gcd_i\{n_i\}$ connected components.
In the sequel we will assume that $d=1$, that is,
 {\it the generic fiber of $F$  is connected}.

%For any such fixed $\rho$, if we pick up a very large radius $R$ and we write
%$B_{large}=B(0,R)$ for the corresponding ball with radius $R$,
%then $F$ restricted to $F^{-1}(\gamma)\cap B_{large}$ is a locally trivial fibration,
%equivalent to $\fbi$; we call it called $\fbio$. Let also
%\[\otop{F}=F|_{F^{-1}(\gamma)\cap B_{large}}.\]

\subsection{The multilink Seifert form of $L_{fund}$.}\label{ss:Lfund}
The surface $Y_0$, the fiber of the multilink $(L_{fund},{\bf n})$, is a generalized Seifert surface of the multilink, cf. \cite[pages 28-29]{EN}.
In the sequel we refer
to it as the {\it multilink Seifert surface}.  Using this surface, one can define {\it multilink
Seifert form} associated with $Y_0$, cf. \cite[\S 15]{EN}. It is a bilinear form on
$H_1(Y_0,\mathbb{Z})$ defined similarly as the classical Seifert form, namely
 $S_{fund}(\alpha,\beta)$ for
$\alpha,\beta\in H_1(Y_0,\mathbb{Z})$ is the linking number $\lk(\alpha,\beta^+)$, where $\beta^+$ is the push-forward of $\beta$ in the positive direction.

If all the multiplicities $\{n_i\}_i$ equal 1, then $L_{fund} $  is a fibred link, and $S_{fund}$ is
its classical Seifert form, hence it has determinant $\pm 1$. In the case of general multiplicity
system ${\bf n}$ this is not the case anymore. In fact, $S_{fund}$ can be even degenerate.
Nevertheless, some parts of the classical theory survive.

\begin{lemma}\label{prop:Se} Let $H^*$ denote the dual of $H$, $T^o$ the interior of $T$, and $\overline{Y}_{[a,b]}:=
\bigcup_{a\leq t\leq b}\overline{Y_t}$.

(a) The groups $H_1(\overline{Y_0},\mathbb{Z})$ and $H_1(Y_0,\mathbb{Z})^*$
are isomorphic. In fact one has the following sequence of isomorphisms, denoted by $s$:
$$H_1(\overline{Y_0})
\stackrel{\partial^{-1}}{\longrightarrow}
H_2(S^3,\overline{Y_0})
\stackrel{(1)}{\longrightarrow}
H_2(S^3,\overline{Y}_{[0,\frac{1}{2}]})
\stackrel{(2)}{\longrightarrow}
H_2(\overline{Y}_{[\frac{1}{2},1]},\ \overline{Y_{1/2}}\cup \overline{Y_1} )
\stackrel{(3)}{\longrightarrow}
$$
$$
H_2(\overline{Y}_{[\frac{1}{2},1]},\ \overline{Y_{1/2}}\cup \overline{Y_1} \cup
(T\cap \overline{Y}_{[\frac{1}{2},1]}))
\stackrel{(4)}{\longrightarrow}
H_2(\overline{Y}_{[\frac{1}{2},1]}\setminus T^o,\
\partial(\overline{Y}_{[\frac{1}{2},1]}\setminus T^o))
$$ $$\stackrel{(5)}{\longrightarrow}
H_1(\overline{Y}_{[\frac{1}{2},1]}\setminus T^o)^*
\stackrel{(6)}{\longrightarrow}
H_1(\overline{Y_1}\setminus T^o)^*
\stackrel{(7)}{\longrightarrow}
H_1({Y_1})^*=H_1({Y_0})^*.
$$

(b) Let $j:H_1(Y_0,\mathbb{Z})\to H_1(\overline{Y_0},\mathbb{Z})$ be induced by the inclusion.
Then the composition $$H_1(Y_0,\mathbb{Z})\stackrel{j}{\longrightarrow}
 H_1(\overline{Y_0},\mathbb{Z})\stackrel{s}{\longrightarrow}H_1(Y_0,\mathbb{Z})^*$$
 can be identified with the multilink Seifert form $S_{fund}$.

(c) Identify the isometric structure $(b_\infty,h_\infty)$ with
the intersection form and monodromy of $H_1(Y_0)$ (by \ref{prop:hvs1}). Then, in matrix notation,
$$b_\infty=S_{fund}-S_{fund}^T, \ \  \ \mbox{and } \ \ \ S_{fund}^Th_\infty=S_{fund}.$$
In particular, $h_\infty$ is an automorphism of $S_{fund}$, that is $h_\infty^TS_{fund}h_\infty=S_{fund}$.
\end{lemma}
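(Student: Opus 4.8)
The plan is to establish (a) as a composition of isomorphisms and then to read off (b) and (c) from the resulting linking/intersection description of $s$. It helps to use the mapping--torus description of the multilink fibration: away from $L_{fund}$ one has $S^3\setminus T^o=(\overline{Y_0}\setminus T^o)\times[0,1]\big/\bigl((x,1)\sim(g(x),0)\bigr)$, with $g$ the geometric monodromy, and one may arrange the monodromy to be concentrated in the gluing along $\overline{Y_0}=\overline{Y_1}$, so that $\overline{Y}_{[0,1/2]}$ and $\overline{Y}_{[1/2,1]}$ become products over their intervals. Then: $\partial$ is an isomorphism by the long exact sequence of $(S^3,\overline{Y_0})$ together with $H_1(S^3)=H_2(S^3)=0$; $(1)$ is induced by the homotopy equivalence $\overline{Y_0}\hookrightarrow\overline{Y}_{[0,1/2]}$ (collapse the interval factor, fixing $L_{fund}$); $(2)$ is excision for $S^3=\overline{Y}_{[0,1/2]}\cup\overline{Y}_{[1/2,1]}$, whose intersection is $\overline{Y_0}\cup\overline{Y_{1/2}}$ and in which $\overline{Y_0}=\overline{Y_1}$; $(3)$ is an isomorphism because $T\cap\overline{Y}_{[1/2,1]}$ collars the ``multilink part'' of $\partial(\overline{Y}_{[1/2,1]}\setminus T^o)$, so that $\overline{Y_{1/2}}\cup\overline{Y_1}\hookrightarrow\overline{Y_{1/2}}\cup\overline{Y_1}\cup(T\cap\overline{Y}_{[1/2,1]})$ is a homotopy equivalence; $(4)$ is excision of $T^o$, using that $W:=\overline{Y}_{[1/2,1]}\setminus T^o$ is a compact oriented $3$--manifold whose boundary is precisely $(\overline{Y_1}\setminus T^o)\cup(\overline{Y_{1/2}}\setminus T^o)\cup(\partial T\cap\overline{Y}_{[1/2,1]})$; $(5)$ is Lefschetz--Poincar\'e duality $H_2(W,\partial W)\cong H^1(W)$ followed by the universal--coefficients identification $H^1(W)=H_1(W)^*$ (valid since $H_0(W)$ is free); $(6)$ dualises the homotopy equivalence $\overline{Y_1}\setminus T^o\hookrightarrow W$ (product over $[1/2,1]$); and $(7)$ dualises the identification $\overline{Y_1}\setminus T^o=Y_1\setminus T^o\simeq Y_1=Y_0$, the open fibre retracting onto its compact core with boundary $\reg{L}$ (using $L_{fund}\subset T^o$). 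Composing, $s$ is an isomorphism.

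For (b) I would trace a class $\beta\in H_1(Y_0,\Z)$ through the chain. After $j$ and $\partial^{-1}$ choose a $2$--chain $D\subset S^3$ with $\partial D$ a cycle representing $j(\beta)$; steps $(1)$--$(4)$ amount to pushing $D$ into $W=\overline{Y}_{[1/2,1]}\setminus T^o$ (i.e.\ to the positive side of $Y_0$ and off $T$), producing a relative $2$--cycle $[D']\in H_2(W,\partial W)$. Under the duality of $(5)$ this class becomes the functional $\gamma\mapsto[\gamma]\cdot[D']$; for $\gamma\in H_1(\overline{Y_1}\setminus T^o)$ the intersection number is computed by pushing $\gamma$ slightly into the interior of $W$ and equals $\lk(\gamma^+,\beta)$. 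Since linking numbers depend only on homology classes away from the relevant cycle, after the harmless retractions $(6)$--$(7)$ one obtains $(s\circ j)(\beta)(\gamma)=\lk(\beta,\gamma^+)=S_{fund}(\beta,\gamma)$, which is the definition of the multilink Seifert form, cf.\ \cite[\S\,15]{EN}. Hence $s\circ j=S_{fund}$.

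For (c), the identity $b_\infty=S_{fund}-S_{fund}^T$ is the standard relation $\alpha\cdot\beta=\lk(\alpha,\beta^+)-\lk(\alpha,\beta^-)$ between the intersection form of a (generalised) Seifert surface and its Seifert form; the intersection points lie in the \emph{interior} of $Y_0$, so the failure of $\overline{Y_0}$ to be a manifold with boundary along $L_{fund}$ plays no role. For $S_{fund}^T h_\infty=S_{fund}$ I would push $\beta\subset Y_0$ to the negative side and carry it once around the fibration: in the mapping--torus picture $\beta^-$ is $g^{-1}(\beta)$ placed at level $1-\epsilon$, which is isotopic inside $S^3\setminus(Y_0\cup L_{fund})$ to the positive pushoff $(h_\infty^{-1}\beta)^+$. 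Therefore $S_{fund}^T(\alpha,\beta)=\lk(\alpha,\beta^-)=\lk(\alpha,(h_\infty^{-1}\beta)^+)=S_{fund}(\alpha,h_\infty^{-1}\beta)$, i.e.\ $S_{fund}^T=S_{fund}h_\infty^{-1}$, so $S_{fund}^T h_\infty=S_{fund}$; transposing this equality yields $h_\infty^T S_{fund}=S_{fund}^T$ and hence $h_\infty^T S_{fund}h_\infty=S_{fund}^T h_\infty=S_{fund}$, the asserted automorphism property.

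The routine part is verifying that each of the maps $(1)$--$(7)$ is an isomorphism and pinning down the orientation conventions so that the answer in (b) is exactly $S_{fund}$ rather than $-S_{fund}$ or its transpose. The genuine obstacle is the local geometry near $L_{fund}$, where $\overline{Y_0}$ fails to be a manifold with boundary: this is what must be controlled in the homotopy equivalences of steps $(3)$ and $(7)$ and in the around--the--fibration transport used in (c), and I would handle it via the Eisenbud--Neumann local model of the multilink fibre near its ``boundary'' \cite[\S\S\,3,\,15]{EN}.
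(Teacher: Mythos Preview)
Your proposal is correct and follows the same approach as the paper. The paper's own proof is extremely terse: it simply labels $\partial^{-1}$ as coming from the long exact sequence of the pair, groups (1), (3), (6), (7) as ``induced by deformation retracts'', (2) and (4) as excisions, (5) as duality of the manifold with boundary $\overline{Y}_{[1/2,1]}\setminus T^o$, and dispatches (b) and (c) by citing the classical case in \cite[(3.15)]{Nem-Eger}. Your write-up expands each of these labels with the correct supporting reason (mapping-torus product structure, excision along the decomposition $S^3=\overline{Y}_{[0,1/2]}\cup\overline{Y}_{[1/2,1]}$, Lefschetz duality plus universal coefficients), traces the class through the chain to recover the linking description in (b), and gives the standard pushoff/monodromy argument for (c); your closing caveat about orientation conventions and the local model near $L_{fund}$ is exactly the residual checking the paper absorbs into its reference to the classical survey.
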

\begin{proof}
In the sequence of isomorphisms $\partial^{-1}$ comes from the exact sequence of the pair;
(1), (3), (6) and (7) are induced by deformation retracts;
(2) and (4) are excisions, while (5) is provided by duality of the manifold with boundary
$\overline{Y}_{[\frac{1}{2},1]}\setminus T^o$. Part (b) and (c) follow by similar
argument as in the classical case, see e.g. the survey \cite[(3.15)]{Nem-Eger}.
\end{proof}
In the above composition, although $s$ is an isomorphism, $j$ in general is not. Since,
by our assumption $\tilde{H}_0(Y_0,\mathbb{Z})=0$, $j$ can be inserted in the following
long exact sequence:
\begin{equation}\label{eq:FF}
0\to H_2(\overline{Y_0})\to H_2(\overline{Y_0}, Y_0)\to H_1(Y_0)\stackrel{j}{\longrightarrow}
H_1(\overline{Y_0})\to H_1(\overline{Y_0}, Y_0)\to 0.
\end{equation}
\begin{lemma}\label{prop:Sfund}
\begin{equation*}\begin{array}{rll}
(a) \ \ \ \ & H_2(\overline{Y_0}, Y_0,\mathbb{Z})& =\oplus_{i=1}^\nu \mathbb{Z}^{n_i'-1},\\
(b) \ \ \ \ & H_1(\overline{Y_0}, Y_0,\mathbb{Z})& =\oplus_{i=1}^\nu (\, \mathbb{Z}^{n_i'-1}\oplus \mathbb{Z}_{n_i/n_i'}).\end{array}\end{equation*}
In particular, $ H_2(\overline{Y_0},\mathbb{Z}) =0$ and
\begin{equation}\label{eq:Ndef}
\dim\ker j=\sum_{i=1}^\nu(n'_i-1).
\end{equation}
\end{lemma}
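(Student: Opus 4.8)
The plan is to compute the relative homology groups $H_*(\overline{Y_0},Y_0;\mathbb{Z})$ directly from the geometry of the pair $(\overline{Y_0},Y_0)$ near the multilink, and then feed the answer into the exact sequence \eqref{eq:FF}. First I would use excision to localize: since $\overline{Y_0}\setminus Y_0$ is contained in the interior of the tubular neighbourhood $T=T(L_{fund})$, one has $H_*(\overline{Y_0},Y_0)\cong H_*(\overline{Y_0}\cap T,\ Y_0\cap T)$, and moreover this splits as a direct sum over the $\nu$ components $L_{fund,i}$, because $T$ is a disjoint union of solid-torus neighbourhoods $T_i$ of the $L_{fund,i}$. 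So it suffices to analyze each piece $(\overline{Y_0}\cap T_i,\ Y_0\cap T_i)$ separately, which is the content of the indices $i$ in the statement.

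Next I would identify each local piece explicitly. Near $L_{fund,i}$ the fiber $\overline{Y_0}$ meets the solid torus $T_i\cong S^1\times D^2$ in a collection of ``spiralling'' meridional-type surfaces; by Lemma \ref{lemma:n_i} the boundary $\reg{L}\cap\partial T_i = Y_0\cap\partial T_i$ has exactly $n_i'=\gcd(n_i,l_i)$ components, while the total multiplicity/winding data is governed by $n_i$ and $l_i$. Concretely, $\overline{Y_0}\cap T_i$ deformation retracts onto $L_{fund,i}$ itself (a circle), so $H_1(\overline{Y_0}\cap T_i)=\mathbb{Z}$ and $H_2=0$, while $Y_0\cap T_i=(\overline{Y_0}\cap T_i)\setminus L_{fund,i}$ is a disjoint union of $n_i'$ annuli (one per boundary component), each wrapping around; hence $H_0(Y_0\cap T_i)=\mathbb{Z}^{n_i'}$ and $H_1(Y_0\cap T_i)=\mathbb{Z}^{n_i'}$, with the map $H_1(Y_0\cap T_i)\to H_1(\overline{Y_0}\cap T_i)=\mathbb{Z}$ sending each annulus-core to $n_i/n_i'$ times the generator (this is where the winding number $n_i$ enters, divided by the number of sheets $n_i'$). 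The long exact sequence of the pair $(\overline{Y_0}\cap T_i,\ Y_0\cap T_i)$ then yields $H_2$ of the pair $=\mathbb{Z}^{n_i'-1}$ (the kernel contributions from the $n_i'$ reduced $H_0$-classes and $n_i'$ $H_1$-classes mapping into a rank-one group) and $H_1$ of the pair $=\mathbb{Z}^{n_i'-1}\oplus\mathbb{Z}_{n_i/n_i'}$, the cyclic torsion coming precisely from the cokernel of multiplication by $n_i/n_i'$ on $\mathbb{Z}$. Summing over $i$ gives (a) and (b).

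Finally, $H_2(\overline{Y_0})=0$ and \eqref{eq:Ndef} fall out of \eqref{eq:FF}: the segment $0\to H_2(\overline{Y_0})\to H_2(\overline{Y_0},Y_0)\to H_1(Y_0)$ shows $H_2(\overline{Y_0})$ injects into $H_2(\overline{Y_0},Y_0)$, but $H_2(\overline{Y_0})$ is free (it is a subgroup of a free group) and $\overline{Y_0}$ is a compact surface that is not closed — it contains the components of $\reg L$ in its ``boundary data'' — so $H_2(\overline{Y_0})=0$; then exactness identifies $\ker j$ with the image of $H_2(\overline{Y_0},Y_0)\to H_1(Y_0)$, and since $H_2(\overline{Y_0})=0$ this map is injective, giving $\dim\ker j=\operatorname{rank}H_2(\overline{Y_0},Y_0)=\sum_{i=1}^\nu(n_i'-1)$.

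The main obstacle I anticipate is the careful local model: pinning down that $\overline{Y_0}\cap T_i$ retracts to the core circle $L_{fund,i}$ while $Y_0\cap T_i$ is exactly $n_i'$ annuli, and that the induced map on $H_1$ is multiplication by $n_i/n_i'$ rather than by $n_i$ or $l_i$. This requires translating the multilink combinatorics of \cite[\S 3--4, \S 15]{EN} — in particular how the fiber surface approaches the multilink with the prescribed winding numbers — into an honest normal form for the pair near each solid torus; once that normal form is in hand, everything else is a two- or three-term exact-sequence computation.
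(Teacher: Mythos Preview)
Your approach is essentially the paper's: excise to the tubular neighbourhood, split over the components $L_{fund,i}$, identify $\overline{Y_0}\cap T_i\simeq S^1$ and $Y_0\cap T_i\simeq\bigsqcup_{n_i'}S^1$, observe that the inclusion induces $\mathbb{Z}^{n_i'}\to\mathbb{Z}$ given by $(a_1,\dots,a_{n_i'})\mapsto\frac{n_i}{n_i'}\sum_k a_k$, and read off (a) and (b) from the long exact sequence of the pair. This is exactly what the paper does (with $B_i=Y_0\cap\partial T_i$ rather than $Y_0\cap T_i$, which is harmless since one retracts onto the other).

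There is, however, a genuine gap in your deduction of $H_2(\overline{Y_0})=0$. You write that ``$\overline{Y_0}$ is a compact surface that is not closed'', but the paper explicitly warns (just above Lemma~\ref{lemma:n_i}) that $\overline{Y_0}$ is \emph{not} a manifold with boundary: at points of $L_{fund,i}$ with $n_i>1$ several sheets of $Y_0$ come together, so the usual ``non-closed surface has vanishing top homology'' argument does not apply. The paper instead uses a rank count in the exact sequence \eqref{eq:FF}: by Lemma~\ref{prop:Se}(a) one knows $H_1(\overline{Y_0})\cong H_1(Y_0)^*$, hence $\operatorname{rank}H_1(\overline{Y_0})=\operatorname{rank}H_1(Y_0)$; since $\operatorname{rank}H_2(\overline{Y_0},Y_0)=\operatorname{rank}H_1(\overline{Y_0},Y_0)=\sum_i(n_i'-1)$ by (a) and (b), the alternating rank sum in \eqref{eq:FF} forces $\operatorname{rank}H_2(\overline{Y_0})=0$, and then freeness (as a subgroup of the free group $H_2(\overline{Y_0},Y_0)$) gives $H_2(\overline{Y_0})=0$. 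Once you have this, your argument for $\dim\ker j$ is fine.
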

\begin{proof}
By excision and deformation retract argument $H_q(\overline{Y_0}, Y_0)=\oplus_i H_q(A_i,B_i)$,
where $$(A_i,B_i):=(\overline{Y_0}\cap T(L_{fund,i}),\  Y_0\cap \partial
 T(L_{fund,i})).$$
Note that the homotopy type of $A_i$ is $L_{fund,i}$, while of
$B_i$ is $n_i'$  copies of $S^1$. Each of these copies maps (via the inclusion $B_i\hookrightarrow
  A_i$) onto $L_{fund,i}$ as the $n_i/n_i'$--covering. Therefore, the inclusion
   $B_i\hookrightarrow A_i$ at $H_1$--level is $\Z^{n_i'}\to \Z$, $\{a_1,\dots,a_{n'_i}\}\mapsto \frac{n_i}{n_i'}\cdot\sum_ka_k$. This gives  (a) and (b).
%$whose fiber can be described explicitly.
% Homologically the picture is as follows: For any $i$ fix base generators
% $\{P_i^k\}_{k=0}^{n_i-1}$ and set  $H_i:=\{\sum_k n_i^kP_i^k\ :\ n_i^k\in{\mathbb{Z}}, \ \sum_k n_i^k=0\}$.
%Let $h_i$ be the operator which sends $P_i^k$ into $P_i^{k'}$ where $k'\equiv k+n_i'$ (mod $n_i$).
% Then $H_2(A_i,B_i)=ker (h_i-1)$, and $H_1(A_i,B_i)=coker (h_i-1)$.
The rest follow by rank computation argument from (\ref{eq:FF}).
\end{proof}

Next, we will consider another compactification $\tY$ of $Y_0$. Denote $Y_0^o:=Y_0\setminus T(L_{fund})^o$, the complement
of the interior of the tube. $\partial Y_0^o$ consists of $\sum_in_i'$ copies of $S^1$. Let $\tY$ be obtained from $Y_0^o$
by gluing to each boundary circle a 2--disc, in this way obtaining a compact smooth surface. In fact, the fibration
at infinity $F$ over $\gamma$ can be compactified (even algebraically) to a fibration $\widetilde{F}$
over $\gamma$ with smooth compact fibers $\tY$, where in this language the compact fibers consists of $Y_0$ with
additionally $\sum_in_i'$ `points at infinity'. This point of view is used in Hodge theoretical computations, see e.g.
\cite{Di0} or \cite[\S 3]{Di}.

One has the following exact sequence:
\begin{equation}\label{eq:exact2}
0\to\Z\to \oplus_i\Z^{n_i'}\to H_1(Y_0,\Z)\to H_1(\tY,\Z)\to 0.
\end{equation}
Above, $\oplus_i\Z^{n_i'}$ is generated by the discs, their images in $H_1(Y_0)$ are the classes of the
circles $\partial Y_0^o$. $\Z$ from the left is  $H_2(\tY)$; its image is generated by $\partial Y_0^o$.
The monodromy extends to $H_1(\tY)$ (or to $\widetilde{F}$) (and will be denoted by $\widetilde{h}_\infty$),
and also to the discs/points at infinity:
it acts trivially on $\Z$, on $\Z^{n_i'}$ acts by permutation of the base elements (denoted by $h_{per}$).

Let $\widetilde{b}_\infty$ be the intersection form on $H_1(\tY)$.

\begin{lemma}\label{lem:HODGE} \

(a) $\widetilde{h}_\infty$ has no eigenvalue 1, and all its Jordan blocks have size not larger than two.

(b) The exact sequence (\ref{eq:exact2}), together with the algebraic monodromy action on it, splits. That is,
$h_\infty$ has no Jordan block of size three,  and the blocks of size two
of $h_\infty$ and $\widetilde{h}_\infty$ agree.
In other words, over $\Q$, one has a direct sum decomposition:
\begin{equation}\label{eq:decHODGE}
(H_1(Y_0);b_\infty,h_\infty)=(H_1(\tY);\widetilde{b}_\infty,\widetilde{h}_\infty)\oplus
(\oplus_i\Q^{n_i'}/\Q;0,h_{per}).
\end{equation}
Moreover, $(\widetilde{b}_\infty,\widetilde{h}_\infty)$ is a non--degenerate isometric structure.

(c) All roots of $h_{\infty}$ are roots of unity.
\end{lemma}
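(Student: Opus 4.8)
The plan is to prove the three parts essentially in the order (a), (c), (b), since (a) is largely a monodromy-at-infinity statement that can be quoted, (c) follows from the fact that the fibration at infinity is quasi-unipotent, and (b) is the real work. For part (a), I would use the algebraic/topological description of the compactified fibration $\widetilde{F}$ over $\gamma$: the smooth compact fiber $\tY$ of a polynomial map in two variables has monodromy at infinity whose eigenvalues are roots of unity and whose Jordan blocks have size at most $2$ on $H_1$. This is classical (see \cite{Neu1, SSS, Di}): the key input is that $F$ extends to a pencil on a smooth projective surface obtained by resolving the base points at infinity, and the local monodromies around the divisor at infinity are quasi-unipotent with nilpotent part of order at most the dimension of the relevant weight-filtration steps, which on $H_1$ of a curve forces blocks of size $\le 2$. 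That $1$ is not an eigenvalue of $\widetilde h_\infty$ comes from the fact that the generic fiber is connected and the variation of Hodge structure is "pure" at the eigenvalue $1$ in the sense that there is no fixed part — equivalently, from Lemma~\ref{lemma:n_i} and the known monodromy at infinity of $\reg L$. Part (c) is then immediate for $\widetilde h_\infty$, and for the quotient summand $h_{per}$ all eigenvalues are roots of unity by construction (it is a permutation matrix), so once (b) is known, all roots of $h_\infty$ are roots of unity; but in fact (c) can be stated independently of (b) directly from \eqref{eq:exact2}, since $h_\infty$ is an extension of $h_{per}$ (on the quotient and a sub) by $\widetilde h_\infty$, hence its eigenvalues are among those of $\widetilde h_\infty$ and $h_{per}$.

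For part (b), the heart of the matter is the splitting of the exact sequence \eqref{eq:exact2} compatibly with the monodromy, and the resulting control of Jordan blocks of $h_\infty$. I would argue as follows. Tensoring \eqref{eq:exact2} with $\Q$ gives a four-term exact sequence of $\Q[h_\infty]$-modules, where on $\Z$ the action is trivial (eigenvalue $1$) and on $\oplus_i\Q^{n_i'}$ it is $h_{per}$, a direct sum of cyclic permutation matrices $C_{n_i'}$, whose eigenvalues are the $n_i'$-th roots of unity (each once). Decompose everything into generalized eigenspaces. On eigenvalues $\lambda\ne 1$: the submodule $\oplus_i\Q^{n_i'}$ has $\lambda$-eigenspace equal to the $\lambda$-part of $h_{per}$, which is semisimple (distinct roots of unity within each $C_{n_i'}$), while $\Z_\Q$ contributes nothing; so the $\lambda$-part of $H_1(Y_0)$ is an extension of the $\lambda$-part of $H_1(\tY)$ by a semisimple piece. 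On the eigenvalue $1$: by part (a), $\widetilde h_\infty$ has no eigenvalue $1$, so the $1$-generalized-eigenspace of $H_1(Y_0)$ is exactly the $1$-part of $\oplus_i\Q^{n_i'}/\Q$ (each $C_{n_i'}$ contributes one eigenvalue $1$, and we kill one copy), hence it is semisimple of dimension $\nu-1$ and the $1$-part contributes only size-$1$ blocks. This already tells us $h_\infty$ has no size-three block: a size-three block of $h_\infty$ at an eigenvalue $\lambda$ would, after passing to the quotient $H_1(\tY)$, either survive (giving a block of size $\ge 3$ for $\widetilde h_\infty$, contradicting (a)) or be truncated by the semisimple sub $\oplus\Q^{n_i'}$ by at most... and here one has to be careful. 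The cleanest route is the one indicated by the authors' own framing: use the polarization. Namely, $(H_1(\tY);\widetilde b_\infty,\widetilde h_\infty)$ is a non-degenerate isometric structure (the intersection form on a smooth compact curve is unimodular and $\widetilde h_\infty$ preserves it), and $(\oplus_i\Q^{n_i'}/\Q; 0, h_{per})$ has zero form. The intersection form $b_\infty$ on $H_1(Y_0)$ has radical exactly the image of $\oplus_i\Q^{n_i'}$ (the boundary circles are null-homologous in the closed-up surface, i.e. they pair trivially), so $b_\infty$ descends to a non-degenerate form on $H_1(\tY)$ agreeing with $\widetilde b_\infty$. Since $h_\infty$ preserves $b_\infty$ and the radical is $h_\infty$-invariant and maps isomorphically (as a monodromy module, up to the $\Z$ correction) to $h_{per}$, a Jordan block of $h_\infty$ not contained in the radical would induce, via $b_\infty$, a block of the same size for the non-degenerate structure $\widetilde h_\infty$; by (a) that size is $\le 2$. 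Blocks inside the radical are blocks of $h_{per}$, hence size $1$. This gives the bound on Jordan block sizes and pins down that the size-two blocks of $h_\infty$ all come from $\widetilde h_\infty$; a dimension/rank count (the radical has dimension $\sum_i(n_i'-1)+(\text{the }\nu-1\text{ from eigenvalue }1\text{ already accounted})$, matching \eqref{eq:Ndef} and Lemma~\ref{prop:Sfund}) then forces the decomposition \eqref{eq:decHODGE} to hold over $\Q$, since we have exhibited $H_1(\tY)$ as an $h_\infty$-invariant complement (any $h_\infty$-invariant lift of $H_1(\tY)$ that is $b_\infty$-orthogonal to the radical works, and such a lift exists because the form is non-degenerate on the quotient).

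The main obstacle I expect is the compatibility of the splitting with the monodromy — i.e. producing an $h_\infty$-invariant complement to the radical of $b_\infty$ inside $H_1(Y_0,\Q)$ and checking it is isomorphic as an isometric structure to $(H_1(\tY);\widetilde b_\infty,\widetilde h_\infty)$. Abstractly one wants to say: a short exact sequence of modules over $\Q[t,t^{-1}]$ (or over $\Q[h]$) in which the sub and quotient have no common eigenvalue would split automatically, but here the sub ($\oplus\Q^{n_i'}$, eigenvalues all roots of unity) and the quotient ($H_1(\tY)$, also roots of unity by (c)) do share eigenvalues, so one cannot invoke that directly; instead the splitting must be forced by the polarization, exactly as in Hodge theory where a short exact sequence of polarized structures splits. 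I would make this precise by taking the orthogonal complement (with respect to $b_\infty$) of the radical's preimage — but the radical is its own orthogonal complement's... so one needs the slightly finer argument: lift $H_1(\tY)$ to a subspace $V\subset H_1(Y_0,\Q)$ on which $b_\infty$ is non-degenerate (possible since $b_\infty$ is non-degenerate on the quotient), then $V\cap \mathrm{rad}(b_\infty)=0$ forces $V$ to be a complement; finally average $V$ over the finite-order semisimple part of $h_\infty$ (or use that the radical admits an $h_\infty$-invariant complement because $h_\infty$ restricted to a finite-index situation is of finite order modulo its unipotent part, and unipotent-equivariant splittings of the weight filtration exist) to make $V$ invariant while keeping $b_\infty|_V$ non-degenerate. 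Getting this averaging argument clean — reconciling the "finite order" of the semisimple part with the genuine size-two unipotent blocks — is where the care is needed; everything else is bookkeeping with \eqref{eq:exact2}, Lemma~\ref{prop:Sfund}, and part (a).
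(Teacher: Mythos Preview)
Your treatment of parts (a) and (c) matches the paper's: both are essentially citations (the paper invokes \cite{Di0,Di} for (a) and the Monodromy Theorem for (c)), and your observation that the eigenvalues of $h_\infty$ are sandwiched between those of $h_{per}$ and $\widetilde h_\infty$ is a clean way to get (c) once (a) is known.

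The gap is in part (b). Your plan is to force the splitting, and hence the Jordan--block bound, from the polarization $b_\infty$ alone, via an $h_\infty$--invariant complement to the radical. This does not work without further input. Here is a concrete obstruction: take $U=\Q^3$ with basis $e_1,e_2,e_3$, let $h$ be the size--$3$ Jordan block at $\lambda=-1$ (so $he_1=-e_1$, $he_2=-e_2+e_1$, $he_3=-e_3+e_2$), and let $b$ be the skew form with $b(e_2,e_3)=1$ and $b(e_1,\cdot)=0$. One checks directly that $h$ preserves $b$, the radical $K=\Q e_1$ is one--dimensional with $h|_K$ semisimple, and the induced form on $U/K$ is non--degenerate with $\widetilde h$ a size--$2$ block. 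This is exactly the configuration your argument must exclude, yet it satisfies every hypothesis you invoke (semisimple sub, non--degenerate quotient, quotient blocks of size $\le 2$). So the sentence ``a Jordan block of $h_\infty$ not contained in the radical would induce, via $b_\infty$, a block of the same size for $\widetilde h_\infty$'' is false as stated, and the averaging fix cannot rescue it: the semisimple part of $h$ has finite order, but averaging over it will not produce an $h$--invariant complement when the unipotent part genuinely mixes the radical with the rest, as it does in the example above.

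The paper's proof of (b) is genuinely different and uses exactly the extra structure you are missing: the exact sequence \eqref{eq:exact2} has a cohomological analogue that is an exact sequence of \emph{mixed Hodge structures} compatible with the monodromy (see \cite[\S 3]{Di}), and the number of size--two Jordan blocks is read off as the number of spectral pairs $(\alpha,w)$ with $w=0$; the explicit computations of \cite{Br} show these numbers coincide for $H^1(Y_0)$ and $H^1(\tY)$, which forces the splitting. In short, the splitting in (b) is a Hodge--theoretic fact (strictness/polarization of the limit MHS), not a consequence of the bilinear form $b_\infty$ alone; your proposal would need to import that input rather than try to manufacture it from the intersection form.
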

\begin{proof}
The statements follow from  the mixed Hodge theory of the degeneration at infinity of $F$ and $\tilde{F}$.
Part (a) is proved e.g. in \cite{Di0,Di}. Part (b) follows from the spectral pair computation of
the mixed Hodge structure carried on $H^1(Y_0,{\mathbb C})$. More precisely, there is a cohomological analogue
of the sequence (\ref{eq:exact2}) which carries mixed Hodge structure compatible with
the action of the monodromy, see again \cite[\S\,3]{Di}.
The number of Jordan blocks of size two correspond to those  spectral pairs $(\alpha,w)$ for which $w=0$.
These are computed for both $H^1(Y_0,{\mathbb C})$ and $H^1(\tY,{\mathbb C})$ in \cite{Br}, and their number agree.
For (c) use e.g. the Monodromy Theorem for $\widetilde{F}$ at infinity.
\end{proof}

Finally, we summarize the properties of $L_{fund}$ in the following proposition.
As usual, if $h$ is an automorphism of the vector space $V$, then $V_{\lambda=1}$ denotes the generalized
eigenspace corresponding to eigenvalue 1, while $V_{\lambda\not=1}$ is the direct sum of the other generalized
eigenspaces.

\begin{proposition}\label{prop:SS}
Set $U:=H_1(Y_0,\mathbb{Q})$ and let $b_\infty$ and $h_\infty$ be the intersection form and the algebraic monodromy induced by the multilink fibration
$\phi:S^3\setminus L_{fund}\to S^1$.

Then the following facts hold:

(a) $Y_0$ is the minimal multilink Seifert surface of the multilink $(L_{fund},{\bf n})$, and all
minimal multilink Seifert surfaces of $(L_{fund},{\bf n})$ are isotopic to $Y_0$.

(b) One  has a direct sum decomposition (Keef decomposition, cf. (\ref{eq:decomp})):
\begin{equation}\label{eq:decfund}
(U,S_{fund})= (U_0\oplus U_{ndeg}, S_{fund,0}\oplus S_{fund,ndeg})\end{equation}
such that $S_{fund,0}=0$ of size
$\Irr=\sum_{i=1}^\nu \ (n_i'-1)$, and $S_{fund,ndeg}$ is non--degenerate.

A possible free generator set for $U_{0}$ is the collection of the cycles
$ L^{\infty}_{reg,i,k}-L^{\infty}_{reg,i,k+1}$ \
$(1\leq i\leq \nu;\ 1\leq k< n_i')$,
where
$\{L^{\infty}_{reg,i,k}\}_{k=1}^{n_i'}$ are the components of  $Y_0\cap \partial T(L_{fund,i})$.

(c) The compatibility of the decompositions (\ref{eq:decfund}) and  (\ref{eq:decHODGE})
is the following:

\vspace{1mm}

(c.1) \ $(U_{ndeg})_{\lambda\not=1}=H_1(\tY)$. On this space $(S_{fund,ndeg})_{\lambda\not=1}$
completes the non--degenerate isometric structure $(\widetilde{b}_\infty,\widetilde{h}_\infty)$
to a simple $(-1)$--variation structure.

\vspace{1mm}

(c.2) \  $(\oplus_i\Q^{n_i'}/\Q;0,[h_{per}])=((U_{ndeg})_{\lambda=1};0,\Id)\ \oplus\ (U_0;0,h_{\infty}|_{U_0})$
(and this is an eigenspace decomposition).

\vspace{1mm}

(c.3) \ $(U_{ndeg})_{\lambda=1}$ has dimension $\nu-1$,
on it the restriction of $b_\infty$ is trivial, the restriction of $h_\infty$
is the identity, and this degenerate isometric structure is completed by $(S_{fund,ndeg})_{\lambda=1}$
to a simple variation structure.

\vspace{1mm}

(c.4) \ On $U_0$ the restrictions of $b_\infty$ and $S_{fund}$ are trivial
(hence all the equivariant signature type invariants
including the Tristram--Levine signatures of restrictions of
$S_{fund}$ and $(b_{\infty},h_{\infty})$ are the same).
Nevertheless, the restriction of $h_{\infty}$ is non--trivial (in fact, it has no eigenvalue 1),
 hence the isometric structure cannot be
completed to a variation structure. The characteristic polynomial of the restriction of $h_\infty$ is
$$\det(h_{\infty}|_{U_0}-t\,\Id )=\prod_i\frac{t^{n_i'}-1}{t-1}.$$
\end{proposition}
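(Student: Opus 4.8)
The plan is to prove each of the four parts (a)--(c.4) of Proposition~\ref{prop:SS} by assembling the topological and Hodge--theoretic facts already established in Lemmas~\ref{prop:Se}, \ref{prop:Sfund} and~\ref{lem:HODGE}.

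\textbf{Part (a).} This is essentially the uniqueness of minimal multilink Seifert surfaces for a fibred multilink. I would argue that $Y_0$, being a fiber of the multilink fibration $\phi$, realizes the minimal possible genus (equivalently, minimal $b_1$) among multilink Seifert surfaces, by the same Alexander-polynomial/Euler-characteristic bound that governs the classical fibred case, and that any two fibers of $\phi$ are isotopic via the flow of a vector field lifting $\partial_\theta$ on $S^1$. The reference point is \cite{EN}, \S\,11--15; I would cite it rather than reprove it.

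\textbf{Part (b).} Here I would invoke Keef's theorem (already recalled around \eqref{eq:decomp}) applied to the symmetric/antisymmetric data of $S_{fund}$: every integral bilinear form is $S$--equivalent, hence over $\Q$ congruent, to $S_{fund,0}\oplus S_{fund,ndeg}$ with $S_{fund,0}=0$ and $\det S_{fund,ndeg}\neq 0$, and the size of the zero block equals $\dim(\ker S_{fund}\cap\ker S_{fund}^T)$. By Lemma~\ref{prop:Se}(b), $S_{fund}$ is identified with $s\circ j$ where $s$ is an isomorphism, so $\ker S_{fund}=\ker j$, and by Lemma~\ref{prop:Se}(c), $S_{fund}^T=S_{fund}h_\infty^{-1}$, so $\ker S_{fund}^T=h_\infty(\ker S_{fund})=\ker j$ as well (since $h_\infty$ preserves $\ker j$: it commutes with $j$ because the monodromy acts compatibly on $Y_0$ and $\overline{Y_0}$). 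Hence $U_0=\ker j$, and its dimension is $\sum_i(n_i'-1)$ by \eqref{eq:Ndef}. For the explicit generators: the classes $L^\infty_{reg,i,k}-L^\infty_{reg,i,k+1}$ are exactly the images under the boundary-of-disc map in \eqref{eq:exact2}, i.e. they lie in $\ker j$ (the boundary circles bound in $\overline{Y_0}$); counting them gives $\sum_i(n_i'-1)$, matching the dimension, so provided they are independent they form a basis. Independence follows from the injectivity statement in \eqref{eq:exact2} after quotienting by the single relation $\Z=H_2(\tY)$.

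\textbf{Parts (c.1)--(c.4).} The strategy is to match the Keef decomposition \eqref{eq:decfund} against the Hodge-theoretic decomposition \eqref{eq:decHODGE} from Lemma~\ref{lem:HODGE}(b), eigenspace by eigenspace. For $\lambda\neq 1$: the summand $\oplus_i\Q^{n_i'}/\Q$ carries the permutation monodromy $h_{per}$, whose characteristic polynomial is $\prod_i(t^{n_i'}-1)/(t-1)$, which has $\lambda=1$ with multiplicity $\nu-1$ and all other roots $\neq 1$; I would compute that the $\lambda\neq 1$ part of $h_{per}$ accounts, together with $H_1(\tY)$, for the whole $\lambda\neq 1$ part of $h_\infty$, and since $\widetilde b_\infty$ is non-degenerate (Lemma~\ref{lem:HODGE}(b)) while the intersection form vanishes on $\oplus_i\Q^{n_i'}/\Q$, the non-degenerate part $U_{ndeg}$ must contain $H_1(\tY)$ entirely on $\lambda\neq 1$; this gives (c.1), with the completion to a simple variation structure coming from $V=(S_{fund,ndeg}^T)^{-1}$ as in \ref{ss:var}. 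For $\lambda=1$: the $\lambda=1$ part of $h_{per}$ is trivial (identity) of dimension $\nu-1$; part of it lands in $U_{ndeg}$ (dimension $\nu-1$, since $H_1(\tY)$ has no eigenvalue $1$ by Lemma~\ref{lem:HODGE}(a)) and the rest in $U_0$. This yields (c.2) and (c.3). Finally (c.4): on $U_0=\ker j$ the form $S_{fund}=s\circ j$ vanishes identically, hence so does $b_\infty=S_{fund}-S_{fund}^T$; the restriction of $h_\infty$ to $U_0$ is the non-trivial permutation-type action, whose characteristic polynomial I read off as $\prod_i(t^{n_i'}-1)/(t-1)$ from \eqref{eq:exact2} (the kernel-of-$j$ summand of $\oplus_i\Z^{n_i'}$ modulo the diagonal $\Z$), and since this has no root at $1$ the isometric structure $(0,h_\infty|_{U_0})$ admits no completion (a completion would force $V\circ 0 = h_\infty|_{U_0}-\Id$, i.e. $h_\infty|_{U_0}=\Id$, contradiction).

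\textbf{Main obstacle.} The routine parts are (a) and the form-vanishing statements. The genuinely delicate point is the \emph{splitting compatibility} in (c): one must be sure that the Keef decomposition over $\Q$ can be chosen to refine the eigenspace decomposition and, simultaneously, that $H_1(\tY)$ sits inside $U_{ndeg}$ rather than being partly absorbed into $U_0$. This rests on Lemma~\ref{lem:HODGE}(b) (the exact sequence \eqref{eq:exact2} splits equivariantly, and $\widetilde b_\infty$ is non-degenerate) together with the dimension bookkeeping $\dim(U_{ndeg})_{\lambda=1}=\nu-1=\dim(\oplus_i\Q^{n_i'}/\Q)_{\lambda=1}$ and $\dim U_0=\sum_i(n_i'-1)$; I expect to spend most of the effort making this bookkeeping airtight and checking that the monodromy genuinely respects all the maps in \eqref{eq:FF} and \eqref{eq:exact2}.
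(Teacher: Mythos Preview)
Your proposal is correct and follows essentially the same route as the paper: part (a) via \cite[(4.1)]{EN}, part (b) by identifying $U_0$ with $\ker j$ through Lemma~\ref{prop:Se}(b) and Lemma~\ref{prop:Sfund}, and part (c) by matching the Keef decomposition against the Hodge splitting of Lemma~\ref{lem:HODGE}, with the characteristic polynomial on $U_0$ read off from the cyclic permutation of the boundary components (Lemma~\ref{lemma:n_i}). The paper's proof is terser---for (b) it also offers the direct geometric check that the difference cycles lie in the two--sided radical (they are homologous inside the tube), and for (c) it simply cites Lemmas~\ref{prop:Se}(c) and~\ref{lem:HODGE}---but your more detailed bookkeeping, including the explicit verification that one may choose the complement $U_{ndeg}$ to be $h_\infty$--invariant, is exactly what those citations unpack.
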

\begin{proof}
(a) follows from \cite[(4.1)]{EN}. For (b) note that the generators listed are in the kernel of
$j$. For this use e.g. the proof of (\ref{prop:Sfund}), where
$\{L^{\infty}_{reg,i,k}\}_{k=1}^{n_i'}$ are exactly the components of $B_i$. Another possibility is
check directly that $S_{fund}(L^{\infty}_{reg,i,k}-L^{\infty}_{reg,i,k+1},\beta)=
S_{fund}(\beta,L^{\infty}_{reg,i,k}-L^{\infty}_{reg,i,k+1})=0$ for any $\beta$. Indeed,
if $T$ is sufficiently small tubular neighborhood, then it does not intersect $\beta$, on the other hand
 inside of $T$ the circles  $L^{\infty}_{reg,i,k}$ and $L^{\infty}_{reg,i,l}$ are homologous.
 For part (c) use Lemmas \ref{prop:Se}(c) and \ref{lem:HODGE};
 for the characteristic polynomial use the fact
 that the components $\{L^{\infty}_{reg,i,k}\}_{k}$ are cyclically
 permuted, cf. Lemma  \ref{lemma:n_i}.
\end{proof}

The multilink structure $(L_{fund},S_{fund})$ now will be used in two different aspects. First, it can be
related with the link $\reg{L}$;  in fact,  one can  recover it from $\reg{L}$, see (\ref{ss:SSSS}). On the other hand,
the multilink fibration of $L_{fund}$ can be identified with the fibration  at infinity $Fib_\infty$ of $F$,
cf. Lemma \ref{prop:hvs1}.
In this way $L_{fund} $ creates the bridge between $\reg{L}$ and $Fib_\infty$.

\subsection{The Seifert form of $\reg{L}$.}\label{ss:SSSS} Set $Y_0^o:=Y_0\setminus T(L_{fund})^o$ as above. Obviously,
$Y_0^o\hookrightarrow Y_0$ admits a deformation retract, hence $H_1(Y_0^o,\mathbb{Z})=H_1(Y_0,\mathbb{Z})$
canonically.

\begin{lemma}\label{lem:Lreg} One has the following facts:

(a)  $Y_0^o$ is the minimal Seifert surface of  $\reg{L}$, and all
minimal Seifert surfaces of $\reg{L}$ are isotopic to $Y_0^o$.

(b) The Seifert form $S_{reg}$ of  $\reg{L}$   associated with $Y^o_0$ is identical with $S_{fund}$
(under the identification $H_1(Y_0^o,\mathbb{Z})=H_1(Y_0,\mathbb{Z})$). In particular, all the result
listed in Proposition \ref{prop:SS} about $S_{fund} $  are valid for $S_{reg}$ too.

(c) Let $h_{reg}$ be the monodromy of the variation structure associated with
$S_{reg,ndeg}=S_{fund,ndeg}$ (as in \ref{ss:var}).
Then the higher Alexander polynomials $\Delta_k$ of $\reg{L}$ satisfies the following  identities:
$\Delta_k\equiv 0$ for $0\leq k<\Irr$, $\Delta_{\Irr}(t)=\det(h_{reg}-t\,\Id)$.
% Moreover,
%$$\det(h_{fund}-t\,\Id)=\Delta_{N'}(t)\cdot \prod_i\frac{t^{n_i'}-1}{t-1}.$$

(d) All the roots of the (higher)  Alexander polynomial $\Delta_{\Irr}$ of $\reg{L}$
are roots of unity.
\end{lemma}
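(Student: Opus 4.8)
The plan is to isolate part (a) as the one genuinely topological assertion and to deduce (b), (c), (d) from it together with Proposition~\ref{prop:SS}, Lemma~\ref{rem:alex}, Lemma~\ref{prop:Se} and Lemma~\ref{lem:HODGE}.

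\emph{Part (a).} By construction $\reg{L}=Y_0\cap\partial T(L_{fund})$ and $Y_0^o=Y_0\setminus T(L_{fund})^o$, so $\partial Y_0^o=\reg{L}$ and $Y_0^o$ is a Seifert surface of $\reg{L}$, lying in the compact piece $M:=S^3\setminus T(L_{fund})^o$. The key observation is that the restriction to $M$ of the multilink fibration $\phi\colon S^3\setminus L_{fund}\to S^1$ is an honest fibration of $M$ over $S^1$ with fiber $Y_0^o$ (near $L_{fund}$ the fibration is standard, so it survives truncation). Hence $Y_0^o$ is incompressible and genus-minimizing inside $M$. To promote this to ``minimal Seifert surface of $\reg{L}$ in $S^3$, unique up to isotopy'' I would take an arbitrary Seifert surface $\Sigma$ of $\reg{L}$, isotope it off the solid tori $T(L_{fund})$ without raising genus --- possible because $\reg{L}$ sits on $\partial T(L_{fund})$ with the fiber slope, hence is null-homologous there and the usual innermost-curve/irreducibility arguments apply --- and then compare $\Sigma\subset M$ with the fiber $Y_0^o$. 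This is exactly the line of argument of \cite[(4.1)]{EN}, which already proves the corresponding statement for $Y_0$ as minimal multilink Seifert surface (cf.\ Proposition~\ref{prop:SS}(a)); the regular-link version is an adaptation and I would cite \cite{EN} (and \cite{Neu1}) for it rather than redo it. \emph{This is the step I expect to be the main obstacle}: since $\reg{L}$ is \emph{not} a fibered link in $S^3$ once $\Irr>0$, one cannot simply read minimality off a fibration of $S^3\setminus\reg{L}$, and the isotopy-into-$M$ argument must be run carefully.

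\emph{Part (b).} The inclusion $Y_0^o\hookrightarrow Y_0$ is a deformation retract, so it identifies $H_1(Y_0^o,\mathbb Z)=H_1(Y_0,\mathbb Z)$ canonically and every homology class is carried by a cycle already inside $Y_0^o$. Both Seifert forms are given by the same rule $(\alpha,\beta)\mapsto\lk(\alpha,\beta^+)$, with linking computed in $S^3$ and the positive push-off taken in the positive normal direction; since $Y_0^o\subset Y_0$ with compatible orientations, these normal directions agree, and hence $S_{reg}$ and $S_{fund}$ are literally the same bilinear form under the identification above. The ``in particular'' clause is then automatic: every statement of Proposition~\ref{prop:SS} concerns the bilinear form $S_{fund}$ alone (its Keef decomposition, the explicit generators of $U_0$, the associated isometric and variation structures), so it transfers verbatim to $S_{reg}$.

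\emph{Parts (c) and (d).} From (b) the Keef decomposition of $S_{reg}$ is that of $S_{fund}$; by Proposition~\ref{prop:SS}(b) its zero block has size $\sum_{i=1}^\nu(n_i'-1)=\Irr$, so $\Irr(\reg{L})=\Irr$ and $h_{reg}$ is precisely the monodromy of $S_{fund,ndeg}$. Applying Lemma~\ref{rem:alex} to $\mv_{\reg{L}}$ then gives $\Delta_k\equiv0$ for $0\le k<\Irr$ and $\Delta_{\Irr}=\det(h_{reg}-t\,\Id)$ up to a unit of $\mathbb Q[t,t^{-1}]$, which is (c). For (d), Lemma~\ref{prop:Se}(c) together with Proposition~\ref{prop:SS}(c) shows that $h_\infty$ respects the Keef decomposition and that $h_\infty|_{U_{ndeg}}=h_{reg}$; thus every eigenvalue of $h_{reg}$ is an eigenvalue of the monodromy at infinity $h_\infty$ of $F$. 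By Lemma~\ref{lem:HODGE}(c) (the Monodromy Theorem at infinity) all eigenvalues of $h_\infty$ are roots of unity, hence so are all roots of $\det(h_{reg}-t\,\Id)$, and therefore, by (c), all roots of $\Delta_{\Irr}$. This establishes (d).
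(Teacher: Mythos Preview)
Your proof is correct and follows essentially the same route as the paper, which disposes of (a)--(c) in one line by appeal to Proposition~\ref{prop:SS} and the construction of $Y_0^o$, and of (d) via Lemma~\ref{lem:HODGE}(c). Two small remarks: your treatment of (a) is in fact more careful than the paper's, since you correctly flag that $\reg{L}$ is not fibered in $S^3$ once $\Irr>0$ and that one must therefore push an arbitrary Seifert surface into $M$ before invoking the fiber argument --- the paper tacitly absorbs this into the citation of \cite[(4.1)]{EN} through Proposition~\ref{prop:SS}(a); and for (d) the paper also records a second, purely combinatorial argument (the splice diagram of $L_{fund}$ forces $\det(h_{fund}-t\,\Id)$ to be a product of cyclotomic polynomials by \cite[Theorem~13.6]{EN}), which you might mention as an alternative that avoids the Hodge-theoretic Monodromy Theorem.
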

\begin{proof}
(a)--(b)--(c) follows from Proposition \ref{prop:SS} and from the construction of $Y_0^o$.
For (d) use either Lemma \ref{lem:HODGE}(c) or note that the multilink fibration of $L_{fund}$ can be represented by
a splice diagram \cite{Neu1,Neu2}, hence the characteristic polynomial of $h_{fund}$ is a product of
cyclotomic polynomials by \cite[Theorem~13.6]{EN}.
\end{proof}

\subsection{The HVS--spectrum of the regular link at infinity,  $\reg{L}$.}\label{ss:Spec} \

For local isolated plane curve singularities we have the following
 classical result, which in the language of H--numbers
$p^k_\lambda(\pm 1)$ and $q^k_\lambda$ of their local links
can be formulated as follows (see e.g. \cite[Proposition~3.1.5, Lemma~3.1.6]{BN}).

\begin{proposition}[Monodromy Theorem]\label{prop:monodlocal}
Let $L$ be an algebraic link and $p^k_\lambda(\pm 1)$, $q^k_\lambda$ its H--numbers. Then
%\begin{itemize}

\vspace{1mm}

(a) \ $q^k_\lambda=0$ for all $k>0$ and $|\lambda|<1$; moreover $p^k_\lambda(\pm 1)=0$ unless $\lambda$ is a root of unity,
%(this is
%the restatement of the fact that all eigenvalues of monodromy are roots of unity).

(b) \ $p^k_\lambda(\pm 1)=0$ for all $k>2$. Moreover $p^2_1(\pm 1)=0$,
%(All Jordan blocks of monodromy are of size at most two, the Jordan
%blocks with eigenvalue $1$ have size only $1$.)

(c) \  $p^2_\lambda(-1)=0$ and $p^1_1(-1)=0$.
%\end{itemize}
\end{proposition}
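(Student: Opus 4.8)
The plan is to deduce the Monodromy Theorem for an algebraic link $L$ — the link of an isolated plane curve singularity $f:(\mathbb{C}^2,0)\to(\mathbb{C},0)$ — from the structure of the associated hermitian variation structure $\mv_L$, which by the discussion in \ref{ss:MHS-def} agrees (up to a global sign) with the homological HVS of the germ $f$. The key external input is the classical Monodromy Theorem for the Milnor fibration: the monodromy $h$ is quasi-unipotent with all eigenvalues roots of unity, and all its Jordan blocks have size at most $n+1=2$. Equivalently, $\cp=\det(h-t\,\Id)$ is a product of cyclotomic polynomials and no Jordan block exceeds size $2$.

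First I would prove (a). The H-numbers $q^k_\lambda$ count the summands $\mv^{2k}_\lambda$ with $|\lambda|<1$; by the classification recalled in \ref{ss:hvs}, such a summand contributes a Jordan block of $h$ with eigenvalue $\lambda$ of modulus strictly less than $1$, so it is not a root of unity. Since $h$ is quasi-unipotent, no such summand can occur, giving $q^k_\lambda=0$ for all $k$ and all $|\lambda|<1$. For the second half of (a), a summand $\mw^k_\lambda(\pm1)$ contributes a Jordan block with eigenvalue $\lambda\in S^1$; again quasi-unipotence forces $\lambda$ to be a root of unity, so $p^k_\lambda(\pm1)=0$ unless $\lambda^m=1$ for some $m$. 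For (b), the size of the Jordan block attached to $\mw^k_\lambda(\pm1)$ is exactly $k$, so the size-$\leq 2$ statement of the Monodromy Theorem immediately yields $p^k_\lambda(\pm1)=0$ for $k>2$; the vanishing $p^2_1(\pm1)=0$ is the refinement that for the eigenvalue $\lambda=1$ the Jordan blocks have size at most $1$ (the monodromy is unipotent of index $1$ on the $\lambda=1$ part, a standard strengthening of the Monodromy Theorem for isolated singularities — equivalently $h-\Id$ annihilates the generalized $1$-eigenspace), which rules out a $\mw^2_1(\pm1)$ summand.

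The genuinely HVS-theoretic content is (c), which is not visible from $h$ alone but from the finer variation data. I would invoke the sign/parity constraints on the indecomposable summands of a simple $\varepsilon$-HVS with $\varepsilon=-1$ established in \cite{Nem2}: for an $\varepsilon$-HVS, the indecomposable structures $\mw^k_\lambda(u)$ that can actually occur are constrained by a compatibility between the parity of $k$, the value of $u$, and the sign $\varepsilon$ (this is exactly the mechanism producing the $u(-1)^{\lfloor\alpha\rfloor}$ factor in Definition~\ref{def:spectrum}(a)). Concretely, for $\varepsilon=-1$ an even-sized block with eigenvalue $\lambda\neq 1$ forces the sign $u=+1$ (equivalently, $\mw^{2n}_\lambda(-1)$ is not an $(-1)$-HVS summand), which gives $p^2_\lambda(-1)=0$; and for $\lambda=1$ the odd-sized block $\mw^1_1(u)$ is constrained to $u=+1$ with $\varepsilon=-1$, giving $p^1_1(-1)=0$. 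Thus (c) follows purely from the list of admissible indecomposables in the $\varepsilon=-1$ classification together with the fact that $\mv_L$ is simple (shown in \ref{ss:var}).

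The main obstacle is part (c): it requires knowing precisely which $\mw^k_\lambda(u)$ are legitimate $(-1)$-variation structures, i.e. reproducing (or citing carefully) the realizability table from \cite[\S\,2]{Nem2}, since the raw Jordan-block data of $h$ says nothing about the sign $u$. Everything else — parts (a) and (b) — reduces cleanly to the classical Monodromy Theorem (and its unipotence refinement at $\lambda=1$) once one translates "Jordan block of size $k$, eigenvalue $\lambda$" into "summand $\mw^k_\lambda(\pm1)$" or "summand $\mv^{2k}_\lambda$" via the classification recalled in \ref{ss:hvs}. I would therefore organize the proof as: (i) cite the Monodromy Theorem for $f$; (ii) read off (a) and (b) from the eigenvalue/Jordan-size dictionary; (iii) derive (c) from the sign constraints in the $\varepsilon=-1$ classification of simple HVS's.
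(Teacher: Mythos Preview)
Your treatment of (a) and (b) is fine and matches what anyone would do: translate the quasi--unipotence and Jordan--size statements of the classical Monodromy Theorem through the dictionary of \S\ref{ss:hvs}. (The paper itself gives no proof here; it simply cites \cite[Proposition~3.1.5, Lemma~3.1.6]{BN}.)

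The gap is in (c). Your claim that the vanishings $p^2_\lambda(-1)=0$ and $p^1_1(-1)=0$ follow from ``the list of admissible indecomposables in the $\varepsilon=-1$ classification'' is incorrect. As recalled in \S\ref{ss:hvs}, for every $\lambda\in S^1$ and every $k\ge 1$ there are \emph{two} indecomposable simple $(-1)$--HVS's, namely $\mw^k_\lambda(+1)$ and $\mw^k_\lambda(-1)$; neither sign is excluded by the abstract classification. The $(-1)^{\lfloor\alpha\rfloor}$ in Definition~\ref{def:spectrum}(a) is a bookkeeping device for writing the spectrum, not a realizability obstruction. So (c) cannot be read off from $\mv_L$ being a simple $(-1)$--HVS alone; it needs genuine input from the geometry of algebraic links.

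The paper makes this explicit in the parallel statement for links at infinity, Proposition~\ref{prop:HN}: there one has $p^2_\lambda(+1)=0$ (the \emph{opposite} sign), and the Remark after Corollary~\ref{lem:dis} traces this sign flip to the sign of the edge determinants in the splice diagram. If your argument for (c) were valid, both signs would vanish for every simple $(-1)$--HVS, which is false. The actual proofs (in \cite{BN}, and for the analogue at infinity in \S\ref{sec:negative} and the proof of Proposition~\ref{prop:HN}(c),(d)) use either the polarization of the limit MHS or, equivalently, the positivity/negativity of the Seifert form on the relevant eigenspaces coming from the splice--diagram combinatorics (uniform twists, definiteness of the linking matrix). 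You should replace step (iii) of your plan with one of these geometric arguments.
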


The fundamental mulitlink at infinity $L_{fund}$, or the regular link at infinity $\reg{L}$, in general, cannot be realized by a local algebraic link.
However, their H--numbers share similar properties as the H--numbers of local links.

\begin{proposition}\label{prop:HN}
For the H--numbers of $\reg{L}$ the following facts hold.

\vspace{1mm}

(a) \ $q^k_\lambda=0$ for all $k>0$ and $|\lambda|<1$, moreover  $p^k_\lambda(\pm 1)=0$ unless $\lambda$ is a root of unity,

(b) \  $p^k_\lambda(\pm 1)=0$ for all $k>2$, and  $p^2_1(\pm 1)=0$,

(c) \  $p^1_1(-1)=0$,

(d) \ $p^2_\lambda(1)=0$ for $\lambda\not=1$.
\end{proposition}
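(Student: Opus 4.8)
The plan is to deduce Proposition~\ref{prop:HN} from the structural results already assembled, namely the decomposition \eqref{eq:decfund} together with the mixed Hodge theoretic input of Lemma~\ref{lem:HODGE}, rather than from any direct computation with Seifert matrices. The H--numbers of $\reg{L}$ are, by Lemma~\ref{lem:Lreg}(b), exactly those of $S_{fund}$, i.e. of the simple HVS obtained by completing $S_{fund,ndeg}$; by Proposition~\ref{prop:SS}(c) this HVS splits as an orthogonal sum of the piece living on $H_1(\tY)$ (the part with $\lambda\neq 1$, namely $(U_{ndeg})_{\lambda\neq 1}$) and the piece $(U_{ndeg})_{\lambda=1}$ of dimension $\nu-1$ carrying trivial $b_\infty$ and trivial $h_\infty$. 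So it suffices to understand the H--numbers of each summand separately, and then collect them.

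First I would treat part (a). Since $S_{reg}$ is defined over $\Q$, the characteristic polynomial $\cp$ has rational (even integral) coefficients, hence is a product of $\Q$--irreducible factors; but by Lemma~\ref{lem:Lreg}(d) all its roots are roots of unity, so $\cp$ is a product of cyclotomic polynomials, all of whose roots lie on $S^1$. Consequently there is no eigenvalue $\lambda$ with $0<|\lambda|<1$, which forces $q^k_\lambda=0$ for all $k$ and $|\lambda|<1$ (these H--numbers count the blocks $\mv^{2k}_\lambda$, which require an eigenvalue strictly inside the disc). The same observation gives $p^k_\lambda(\pm1)=0$ unless $\lambda$ is a root of unity, since the $\mw^k_\lambda(u)$ contribute Jordan blocks of $h$ with eigenvalue $\lambda\in S^1$, and those eigenvalues are among the roots of $\cp$.

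Next I would prove parts (b), (c), (d) using the Jordan block information of Lemma~\ref{lem:HODGE}. By that lemma, $\widetilde h_\infty$ has no eigenvalue $1$ and all its Jordan blocks have size $\le 2$; passing through the splitting \eqref{eq:decHODGE}, $h_\infty$ itself has no Jordan block of size $\ge 3$, and any block of size $2$ comes from the $H_1(\tY)$ summand (so has $\lambda\neq 1$). Since the matrix of $h$ attached to $\mw^k_\lambda(u)$ is a single Jordan block of size $k$ and that of $\mv^{2k}_\lambda$ is two blocks of size $k$, the bound on Jordan block size immediately yields $p^k_\lambda(\pm1)=0$ for $k>2$, which is the first half of (b); and the absence of size--$2$ Jordan blocks with eigenvalue $1$ gives $p^2_1(\pm1)=0$, the second half of (b). For (c) and (d) the key is that the size--$2$ part of the monodromy lives entirely on $(U_{ndeg})_{\lambda\neq1}=H_1(\tY)$, where $(S_{fund,ndeg})_{\lambda\neq1}$ completes the \emph{non--degenerate} isometric structure $(\widetilde b_\infty,\widetilde h_\infty)$ to a simple variation structure; this is, up to a sign, precisely the homological HVS situation of a degeneration whose limit MHS has the purity properties recorded in Lemma~\ref{lem:HODGE}(b) (the blocks of size two correspond to spectral pairs with $w=0$). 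I would then invoke Proposition~\ref{prop:mod2} and the explicit sign/weight bookkeeping of \S\ref{ss:MHS-def}: a spectral pair $(\alpha,w)$ with $w=0$ contributes to $p^{2}_\lambda(u)$ with the sign $u$ determined by the parity of $\lfloor\alpha\rfloor$ in the manner of Definition~\ref{def:spectrum}(a), and since the relevant Hodge numbers here all sit in the part of the MHS that forces $u=-1$ (equivalently, $p^2_\lambda(1)=0$ for $\lambda\neq1$), one gets (d); similarly the $\lambda=1$, size--$1$ structure on $(U_{ndeg})_{\lambda=1}$ has trivial form $b_\infty$, and a size--$1$ block with eigenvalue $1$ and trivial form is of type $\mw^1_1(+1)$, never $\mw^1_1(-1)$, giving $p^1_1(-1)=0$, which is (c).

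The main obstacle I anticipate is (c) and (d): parts (a), (b) are essentially immediate from ``roots of unity'' and ``Jordan blocks of size $\le 2$'', but pinning down the \emph{sign} $u$ in $p^k_\lambda(u)$ requires genuinely tracing through the correspondence between the hermitian variation structure and the polarized limit MHS — i.e. identifying which equivariant primitive Hodge numbers of $(H^1(Y_0),F,W)$ (or of $\tY$) can be nonzero, and checking that the polarization forces the sign to be $-1$ exactly in the cases claimed. I would handle this by reducing to the non--degenerate summand, where the classification of simple $(-1)$--HVS from \cite{Nem2} applies verbatim, and then citing the local Monodromy--Theorem analogue Proposition~\ref{prop:monodlocal}(b)--(c) together with the observation that the $w=0$ spectral pairs of a variation structure completing a non--degenerate isometric structure behave exactly as in the local algebraic case; alternatively, one can argue directly that a size--$2$ eigenvalue--$\lambda$ Jordan block carrying a non--degenerate $(-1)$--symmetric form that is an isometry admits only the completion $\mw^2_\lambda(-1)$ (up to the $\mv^{2k}$ blocks, already excluded), forcing $p^2_\lambda(1)=0$, and that a trivial form on a $1$--dimensional eigenvalue--$1$ space is necessarily of type $\mw^1_1(+1)$.
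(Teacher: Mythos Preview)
Your treatment of (a) and (b) is correct and matches the paper: both follow from Lemma~\ref{lem:HODGE} (roots of unity and Jordan block size $\le 2$, with no size--$2$ block at $\lambda=1$).

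For (c), however, your argument has a genuine error. You claim that a size--$1$, eigenvalue--$1$ block with trivial intersection form is automatically $\mw^1_1(+1)$ and never $\mw^1_1(-1)$. But by definition $\mw^1_1(\pm 1)=(\CC;0,\Id,\mp 1)$: \emph{both} have $b=0$ and $h=\Id$, and they differ only in the variation operator $V$. Since $b_\infty$ is degenerate on $(U_{ndeg})_{\lambda=1}$, the isometric structure does not determine the completion; one must actually compute $V=(S_{fund,ndeg}^T)^{-1}$ on this subspace. The paper does precisely this: it shows that $S_{fund}$ restricted to $U_{\lambda=1}$ is \emph{negative definite}, via the combinatorial Proposition~\ref{prop:negative} on linking matrices of splice diagrams with positive near weights and positive multiplicities. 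That sign input is what forces $p^1_1(-1)=0$; nothing in the isometric data alone gives it.

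For (d) the gap is of the same nature. Your ``alternative'' claim that a size--$2$ Jordan block with non--degenerate $(-1)$--symmetric form admits only the completion $\mw^2_\lambda(-1)$ is false: both $\mw^2_\lambda(+1)$ and $\mw^2_\lambda(-1)$ exist for every $|\lambda|=1$, with non--isomorphic (but both non--degenerate) isometric structures, so the Jordan type alone does not pick the sign. Your MHS route could in principle be made to work, since by \cite{Nem2} the mod~$2$ spectral pairs determine the H--numbers; but the bare assertion that ``the relevant Hodge numbers force $u=-1$'' requires a concrete input about the limit MHS at infinity which you have not supplied, and crucially the sign here is \emph{opposite} to the local case (Proposition~\ref{prop:monodlocal}(c) gives $p^2_\lambda(-1)=0$, not $p^2_\lambda(+1)=0$), so appealing to ``behaves exactly as in the local algebraic case'' would yield the wrong conclusion. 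The paper instead uses Neumann's splice description: the fundamental multilink at infinity has a splice diagram with negative edge determinants, hence uniform positive twists, and then the inequality $S_{fund}(\lambda x,(h_\infty-\lambda)x)\ge 0$ from \cite{Neu-inv} excludes $\mw^2_\lambda(+1)$.
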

\begin{proof} (a) and (b)   follow from Lemma \ref{lem:HODGE}. Next we prove (c).
First we recall that ${\mathcal W}^1_1(\pm 1)=({\mathbb C}; 0, \Id, \mp 1)$, hence we have to show that
the restriction of
$S_{fund}$ on $U_{\lambda=1}$ is negative definite. This follows from the more general result
Proposition~\ref{prop:negative} of section \ref{sec:negative}.

(d)
%Let $\Sigma$ be a fiber of fibration of $L_{fund}$ and $U=H_1(\Sigma,\mathbb{C})$ (in \cite{Neu-inv} it is
%denoted by $H$). Let $h:U\to U$ be the monodromy operator. For each eigenvalue $\lambda\in S^1\setminus\{1\}$,
%let $U_\lambda$ denote a corresponding eigenspace. Let also $S$ be the Seifert form corresponding to $\Sigma$.
By \cite{Neu1,NeRu} $L_{fund}$ can is represented by a splice diagram with all edges have negative
determinants. Thus, $L_{fund}$
has uniform twists (all positive) (see \cite[Chapter 14]{EN}). Therefore, by the discussion
in \cite[Section 2]{Neu-inv} we have for each $x\in U_\lambda$
\[S_{fund}(\lambda x,(h_\infty-\lambda)x)\ge 0.\]
This shows that $p^2_\lambda(+1)$ cannot occur.
\end{proof}

Let $Sp_{\HVS}(\reg{L})$ be the HVS--spectrum associated with the link $\reg{L}$.

\begin{corollary}\label{lem:dis} \
(a) The HVS--spectrum of $U_{\lambda=1}$ consists of  $(\nu-1)$ copies of $(1)$.
%In other words, $p^2_1(\pm 1)(\mv_{=1})=0$ and $p^1_1(-1)(\mv_{=1})=0$.

(b) All elements of $Sp_{\HVS}(\reg{L})$ are situated in $(0,2)$, and $Sp_{\HVS}(\reg{L})$
is symmetric with respect to 1.
\end{corollary}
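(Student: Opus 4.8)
The plan is to combine the vanishing statements of Proposition~\ref{prop:HN} with the structural decomposition of Proposition~\ref{prop:SS} and a direct evaluation of the counting function $s(\cdot)$ of Definition~\ref{def:spectrum}. As a preliminary observation, note that by Proposition~\ref{prop:HN}(a) all H--numbers $q^k_\lambda$ with $|\lambda|<1$ of $\reg{L}$ vanish, so in Definition~\ref{def:spectrum}(b) one gets $s(z)=0$ for every $z$ with non--zero imaginary part. Hence $ISp=\emptyset$ and $ESp=Sp=Sp_{\HVS}(\reg{L})$; in particular every spectral number is real and, by construction, lies in $(0,2]$.

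\emph{Proof of (a).} By Proposition~\ref{prop:SS}(c.4) the restriction of $h_\infty$ to $U_0$ has no eigenvalue $1$, so the generalized eigenspace $U_{\lambda=1}$ coincides with $(U_{ndeg})_{\lambda=1}$, which has dimension $\nu-1$ and underlies a simple variation structure (Proposition~\ref{prop:SS}(c.3)). Since $\mv_{\reg{L}}$ is simple, $U_{\lambda=1}$ is a direct sum of indecomposable summands $\mw^k_1(\pm1)$ (a $\mv^{2k}_\lambda$ summand cannot contribute, as it forces $|\lambda|<1$). Now Proposition~\ref{prop:HN}(b) kills all $p^k_1(\pm1)$ with $k\ge2$, and Proposition~\ref{prop:HN}(c) kills $p^1_1(-1)$; hence the only surviving indecomposable summand with $\lambda=1$ is $\mw^1_1(+1)=(\CC;0,\Id,-1)$, and matching dimensions gives $U_{\lambda=1}\cong(\nu-1)\cdot\mw^1_1(+1)$. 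The real numbers $\alpha\in(0,2]$ with $e^{2\pi i\alpha}=1$ are $\alpha=1$ and $\alpha=2$; plugging $p^1_1(+1)=1$ into Definition~\ref{def:spectrum}(a) (only the term $n=1,\,u=+1$ contributes) yields
\[
s(1)=\frac{2\cdot1-1-(+1)(-1)^{\lfloor 1\rfloor}}{2}=\frac{1-(-1)}{2}=1,\qquad
s(2)=\frac{2\cdot1-1-(+1)(-1)^{\lfloor 2\rfloor}}{2}=\frac{1-1}{2}=0 .
\]
So the spectrum of $\mw^1_1(+1)$ is the single number $1$, and that of $U_{\lambda=1}$ is $(\nu-1)$ copies of $1$.

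\emph{Proof of (b).} Since $Sp\subset(0,2]$, to obtain $Sp\subset(0,2)$ it suffices to check that $2\notin Sp$, i.e. that $s(2)=0$. Only H--numbers with $\lambda=1$ can feed into $s(2)$; with $\lfloor 2\rfloor$ even, the relevant terms of Definition~\ref{def:spectrum}(a) are $\tfrac{2n-1-u}{2}p^{2n-1}_1(u)$ and $n\,p^{2n}_1(u)$. The term $n=1,\,u=+1$ has coefficient $\tfrac{1-1}{2}=0$; the term $n=1,\,u=-1$ equals $p^1_1(-1)=0$ by Proposition~\ref{prop:HN}(c); and all $p^k_1(\pm1)$ with $k\ge2$ vanish by Proposition~\ref{prop:HN}(b). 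Hence $s(2)=0$ and $Sp\subset(0,2)$. For the symmetry, recall from \S\ref{ss:var} (just after the definition of the H--numbers of a link) that $Sp\setminus\Z$ is symmetric with respect to $1$. As $Sp\subset(0,2)$, the only integer possibly in $Sp$ is $1$, and a multiset of copies of $1$ is trivially symmetric about $1$; combining with the symmetry of $Sp\setminus\Z$ shows that $Sp_{\HVS}(\reg{L})$ is symmetric about $1$.

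The argument has no conceptual obstacle once Propositions~\ref{prop:SS} and \ref{prop:HN} are in hand; the only point requiring care is the bookkeeping in evaluating $s(1)$ and $s(2)$ from the somewhat intricate formula of Definition~\ref{def:spectrum}(a), and verifying that every H--number attached to the eigenvalue $\lambda=1$ has been accounted for.
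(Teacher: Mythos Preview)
Your proof is correct and follows exactly the route the paper intends: the corollary is stated without proof in the paper, as an immediate consequence of Propositions~\ref{prop:SS} and~\ref{prop:HN} together with the spectrum formula of Definition~\ref{def:spectrum}, and you have carried out precisely this verification (including the careful bookkeeping at $\alpha=1$ and $\alpha=2$).
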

%Moreover,  \eqref{eq:spclosed-open} implies
%\begin{corollary} \label{cor:univ2}
%For $\reg{L}$ and $\zeta=e^{2\pi ix}$, $x\in (0,1)$,  one has:
%\[-\sigma(\zeta)+\nn(\zeta)+\deg\cp\geq 2|Sp_{\HVS}(\reg{L})\cap[x,x+1)|.\]
%\end{corollary}
\begin{remark}
The proofs of Propositions~\ref{prop:monodlocal} and \ref{prop:HN} rely on some key properties of the
splice diagrams of the corresponding links. The common properties (which imply the common
$p^1_1(-1)=0$) is that in both cases the `multiplicities of the nodes' and the `(near) weights' are positive.
The crucial difference between the diagrams is that in the local case the edge determinants are positive, while
for the diagram at infinity they are negative.
This implies the sign difference in the $p^2_\lambda(\pm 1)$--vanishing.
For more details, see subsection \ref{sec:negative}.
\end{remark}

The next identity  will be often used in the sequel.
\begin{corollary}\label{prop:degcp}
If $Y_\infty$ is a regular fiber of $F$, then
$1-\chi(Y_\infty)=\deg\Delta_{\Irr}(\reg{L})+\Irr.$
\end{corollary}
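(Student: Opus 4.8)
The plan is to compute both sides of the claimed identity
$$1-\chi(Y_\infty)=\deg\Delta_{\Irr}(\reg{L})+\Irr$$
by reducing everything to the isometric/variation structure $(U;b_\infty,h_\infty)$ on $U=H_1(Y_0,\mathbb{Q})$, using the identification $Y_0\simeq Y_\infty$ (Lemma~\ref{prop:hvs1}) and the fact (Lemma~\ref{lem:Lreg}(b)) that $S_{reg}=S_{fund}$ sits on $H_1(Y^o_0)=H_1(Y_0)$. First I would rewrite the left-hand side: since $Y_\infty$ is connected (our standing assumption $d=1$) and has $c_{\reg L}=\sum_i n_i'$ boundary circles, we have $\chi(Y_\infty)=1-b_1(Y_\infty)$ with $b_1(Y_\infty)=\dim_{\mathbb Q} H_1(Y_0,\mathbb Q)=\dim U$, so that $1-\chi(Y_\infty)=\dim U$. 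Hence the statement is equivalent to $\dim U=\deg\Delta_{\Irr}(\reg L)+\Irr$.

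Next I would analyze the right-hand side. By Lemma~\ref{lem:Lreg}(c), $\Delta_{\Irr}(\reg L)(t)=\det(h_{reg}-t\,\Id)$ where $h_{reg}$ is the monodromy of the variation structure associated with $S_{reg,ndeg}=S_{fund,ndeg}$; thus $\deg\Delta_{\Irr}(\reg L)=\operatorname{size}(S_{fund,ndeg})=\dim U_{ndeg}$ in the Keef decomposition $U=U_0\oplus U_{ndeg}$ of Proposition~\ref{prop:SS}(b). On the other hand, by definition $\Irr=\operatorname{size}(S_{fund,0})=\dim U_0$, and indeed Proposition~\ref{prop:SS}(b) identifies $\dim U_0=\sum_{i=1}^\nu(n_i'-1)$. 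Therefore $\deg\Delta_{\Irr}(\reg L)+\Irr=\dim U_{ndeg}+\dim U_0=\dim U$, which is exactly the left-hand side. So the corollary follows by simply assembling Lemma~\ref{lem:Lreg}(b)--(c) and Proposition~\ref{prop:SS}(b) together with the Euler characteristic bookkeeping for the connected surface $Y_\infty$.

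The only point requiring a little care — and the place I would regard as the main (mild) obstacle — is making sure the Euler characteristic count is right: one must use that $Y_\infty$ is connected (guaranteed by $d=\gcd_i n_i=1$), that it is a smooth open surface with the homotopy type of a finite wedge of circles, and that $H_1(Y_0,\mathbb Z)$ is free of rank equal to $\dim_{\mathbb Q} U$; then $1-\chi(Y_\infty)=b_1(Y_\infty)=\dim U$. After that, the identity is a purely formal consequence of the structural results already proved, matching $\deg\Delta_{\Irr}$ with $\dim U_{ndeg}$ and $\Irr$ with $\dim U_0$ via the Keef decomposition. No new geometric input is needed beyond what is recorded in \ref{ss:Lfund} and \ref{ss:SSSS}.
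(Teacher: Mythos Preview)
Your proof is correct and follows essentially the same approach as the paper: both arguments reduce $1-\chi(Y_\infty)$ to $\dim U=\operatorname{size}(S_{reg})$ via the minimal Seifert surface (Lemma~\ref{lem:Lreg}(a)), identify $\deg\Delta_{\Irr}$ with $\dim U_{ndeg}$ via Lemma~\ref{lem:Lreg}(c), and conclude by the Keef decomposition of Proposition~\ref{prop:SS}(b). Your treatment is slightly more explicit about the Euler characteristic bookkeeping, but the underlying logic is identical.
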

\begin{proof}
%Let $\reg{S}$ be the minimal Seifert matrix of $L_{reg}$.
By Lemma~\ref{lem:Lreg}(a) the size of $S_{reg}$ is equal to $1-\chi(Y_\infty)$.
On the other hand, $\deg\Delta_{\Irr}$ is equal to the size of $S_{ndeg}$ by Lemma~\ref{lem:Lreg}(c). The difference of the sizes of the two matrices
is equal to $\Irr$ by Proposition~\ref{prop:SS}(b) (cf. also Lemma~\ref{lem:Lreg}(b)).
\end{proof}

\subsection{The Hodge--spectrum of the fibration of $F$ at infinity.}\label{S:MHSinf}
Let $Sp_{\MHS,\infty}$ be the spectrum associated with the limit mixed Hodge structure of $F$ at infinity
defined in a similar way as in subsection \ref{ss:MHS-def}.
The main result of this subsection shows that $Sp_{\MHS,\infty}$ can be recovered from the
rational Seifert form of $\reg{L}$ and from the integers $\{n_i'\}_i$.
Conversely, $Sp_{\HVS}(\reg{L})$ is the maximal subset
of $Sp_{\MHS,\infty}$, which is symmetric with respect to $1$.

More precisely, in $\Z[\Q]$ one has

\begin{theorem}\label{prop:finalspeccompare}
\[Sp_{\MHS,\infty}=Sp_{\HVS}(\reg{L})+ \sum_{i=1}^\nu \ \Big(\frac{1}{n'_i}\Big)+\dots +\Big(\frac{n'_i-1}{n'_i}\Big).\]
\end{theorem}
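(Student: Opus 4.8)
The plan is to compute both sides of the identity separately, in the Grothendieck group $\Z[\Q]$ of spectra, and match them term by term according to the eigenvalue decomposition $U=U_{\lambda=1}\oplus U_{\lambda\neq1}$ of the monodromy. The decomposition that organizes everything is the one recorded in Proposition~\ref{prop:SS}(c): on $(U_{ndeg})_{\lambda\neq1}=H_1(\tY)$ the variation structure is the genuine (non--degenerate, simple) $(-1)$--HVS coming from the compact fibration $\widetilde{F}$, whereas $(U_{ndeg})_{\lambda=1}$ has dimension $\nu-1$ with trivial $b_\infty$ and trivial $h_\infty$, and $U_0$ carries the purely combinatorial permutation structure with characteristic polynomial $\prod_i(t^{n_i'}-1)/(t-1)$.

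\textbf{Step 1: the $\lambda\neq1$ part.} On $H_1(\tY)$ the HVS--spectrum $Sp_{\HVS}$ agrees, by Proposition~\ref{prop:mod2} (mod $2$ reduction of the Hodge spectrum) and the fact that $\widetilde h_\infty$ has no eigenvalue~$1$ and only Jordan blocks of size $\le2$ (Lemma~\ref{lem:HODGE}(a)), with the $\lambda\neq1$ part of $Sp_{\MHS,\infty}$: here there is no weight--shift subtlety because the spectral numbers already live in the right strip, and the limit MHS of $\widetilde F$ is exactly the "punctured" model used in the Hodge--theoretic references \cite{Di0,Di,Br}. So for the $\lambda\neq1$ components the two sides of the claimed identity coincide, and the correction term $\sum_i\sum_{1\le k<n_i'}(k/n_i')$ contributes nothing here since all its entries have $e^{2\pi i\cdot(k/n_i')}$ a nontrivial $n_i'$--th root of unity but these are absorbed on the left-hand side too — more precisely one must check that $Sp_{\HVS}(\reg L)$ restricted to eigenvalue $\lambda\neq1$ already equals what the $H_1(\tY)$ piece produces, and that the permutation part $U_0$ does NOT contribute to $Sp_{\HVS}(\reg L)$. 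This last point is exactly Proposition~\ref{prop:SS}(c.4): on $U_0$ the form $S_{fund}$ is trivial, hence $U_0$ lies in the degenerate block $S_{fund,0}$, so it is discarded when passing to $S_{ndeg}$ and contributes nothing to the HVS. Thus the entire contribution of $U_0$ to $Sp_{\MHS,\infty}$ must come from the correction term — and indeed the eigenvalues of $h_\infty|_{U_0}$ are, with multiplicity, precisely the nontrivial $n_i'$--th roots of unity for each $i$, matching $\sum_i\sum_{1\le k<n_i'}(k/n_i')$ once we verify the MHS on this combinatorial piece is pure of the expected weight (it is, being governed by the permutation action on the points at infinity, which carries a pure Hodge--Tate type structure).

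\textbf{Step 2: the $\lambda=1$ part.} On the $\lambda=1$ eigenspace, $Sp_{\MHS,\infty}$ restricted to $\lambda=1$ should be $(\nu-1)$ copies of $(1)$: this is the content of Corollary~\ref{lem:dis}(a) for the HVS side, and on the Hodge side it follows because $(U_{ndeg})_{\lambda=1}$ has dimension $\nu-1$, $b_\infty$ and $h_\infty$ are trivial there, and the relevant ${\mathcal W}^1_1$--structure forces the spectral number to be exactly $1$ (the weight is such that $\alpha=1$; this is where Proposition~\ref{prop:negative}, cited in the proof of Proposition~\ref{prop:HN}(c), guarantees negative definiteness and hence the $u=-1$ sign, i.e. $p^1_1(-1)=0$ and the structure is $(\nu-1)\cdot{\mathcal W}^1_1(+1)$). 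The correction term contributes nothing with $\lambda=1$ since $k/n_i'$ is never an integer for $1\le k<n_i'$. So both sides restricted to $\lambda=1$ equal $(\nu-1)(1)$.

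\textbf{Main obstacle.} The delicate point is Step~1's claim that the limit MHS of $F$ at infinity on the $\lambda\neq1$ eigenspace is faithfully captured by the HVS of $H_1(\tY)$ together with the correct weight filtration — i.e. that the mod~$2$ collapse loses no information and that the spectral pairs computed in \cite{Br} for $H^1(Y_0)$ versus $H^1(\tY)$ differ exactly by the combinatorial term (this is where Lemma~\ref{lem:HODGE}(b)'s splitting of the exact sequence \eqref{eq:exact2} as MHS is essential). Concretely I would: (i) invoke the cohomological analogue of \eqref{eq:exact2} as an exact sequence of MHS compatible with monodromy, as in \cite[\S3]{Di}; (ii) read off from it that $Spp_{*,\infty}(F)$ decomposes as the spectral pairs of $H^1(\tY)$ plus those of the permutation module $\oplus_i\Q^{n_i'}/\Q$; (iii) identify the former, via the polarization and the mod~$2$ collapse, with $Sp_{\HVS}$ of $(U_{ndeg})_{\lambda\neq1}$ and the $\lambda=1$ part, which together are $Sp_{\HVS}(\reg L)$; and (iv) compute the spectral numbers of the permutation module directly — the eigenvalue $e^{2\pi i k/n_i'}$ contributes the spectral number $k/n_i'$ (with the normalization placing it in $(0,1)$, consistent with the Hodge--Tate weight), giving exactly $\sum_i\sum_{1\le k<n_i'}(k/n_i')$. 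Assembling (ii)–(iv) and combining with Step~2 yields the stated equality in $\Z[\Q]$.
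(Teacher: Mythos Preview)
Your proposal is correct and follows essentially the same approach as the paper: both arguments decompose $U$ via Proposition~\ref{prop:SS}(c) into the three pieces $(U_{ndeg})_{\lambda\neq1}=H_1(\tY)$, $(U_{ndeg})_{\lambda=1}$, and $U_0$, then match the Hodge and HVS spectra on each piece (the first via the polarization/mod~$2$ argument of \cite[(6.5)]{Nem2}, the second as $(\nu-1)$ copies of $(1)$ using Corollary~\ref{lem:dis} and \cite{Br,Di}, and the third as the correction term via the computation in \cite{Br}). Your Step~1 exposition is a bit tangled around the phrase ``absorbed on the left-hand side too,'' but your subsequent ``more precisely'' clause and the Main Obstacle paragraph land on exactly the paper's argument.
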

\begin{proof} Let us consider the decomposition given in Proposition \ref{prop:SS}:
\begin{equation}
(U_0; 0,h_\infty|_{U_0})\oplus ((U_{ndeg})_{\lambda=1};0,\Id)\oplus ((U_{ndeg})_{\lambda\not=1};
\widetilde{b}_\infty,\widetilde{h}_\infty)\end{equation}
The last component carries a limit mixed Hodge structure which is polarized by $\widetilde{b}_\infty$, and also it
extends to a simple hermitian variation structure with $(S_{fund,ndeg})_{\lambda\not=1}$.
In such a situation, the HVS--spectrum agrees with the Hodge spectrum. The proof is
absolutely the same as in the local case, see Proposition \ref{prop:mod2}, or the original source \cite[(6.5)]{Nem2}
(or the affine polynomial case in \cite{GN}).

For the middle component both HVS and Hodge  spectra are $(\nu-1)$ copies of (1): in the HVS case see
Corollary \ref{lem:dis} as a consequence of Proposition \ref{prop:HN}(c), while for the Hodge
 case see \cite{Br} or \cite{Di}.

These two components provide the contribution from $Sp_{\HVS}(\reg{L})$. The remaining part, provided by the first summand is computed in \cite{Br}, and it is the sum in the right hand
side of the identity  of Theorem \ref{prop:finalspeccompare}.
\end{proof}
%We conclude this section with the following numerical comparison.
%\begin{corollary}\label{cor:reg}
%For any $x\in(0,1)$ we have
%\[|\reg{Sp}\cap(x,x+1)|\le |Sp_\infty\cap(x,x+1)|\le |\reg{Sp}\cap(x,x+1)|+\Irr,\]
%\end{corollary}

\begin{example} Recall that
$F$ is `good at infinity' if and only if $\reg{L}$ is a fibred link, that is  $n_i=1$ for all $i$,
cf. \cite[Theorem 6.1]{NeRu}.
By our result, in such a  case one has
 $Sp_{\MHS,\infty}=Sp_{\HVS}(\reg{L})$.
\end{example}

\begin{corollary}\label{cor:diff}
For any $x\in[0,1]$ one has
\[|Sp_{\HVS}(\reg{L})\cap(x,x+1)|\le |Sp_{\MHS,\infty}\cap (x,x+1)|\le |Sp_{\HVS}(\reg{L})\cap (x,x+1)|+\Irr.\]
and analogous inequality holds for $Sp\setminus[x,x+1]$.
\end{corollary}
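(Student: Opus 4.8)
The plan is to derive Corollary~\ref{cor:diff} directly from Theorem~\ref{prop:finalspeccompare}. The theorem expresses $Sp_{\MHS,\infty}$ as the disjoint union (in $\Z[\Q]$) of $Sp_{\HVS}(\reg{L})$ and the extra collection $E:=\sum_{i=1}^\nu\big((\frac{1}{n_i'})+\dots+(\frac{n_i'-1}{n_i'})\big)$. Since these are genuine multisets of rational numbers, for any subset $A\subset\Q$ one has $|Sp_{\MHS,\infty}\cap A| = |Sp_{\HVS}(\reg{L})\cap A| + |E\cap A|$. Taking $A=(x,x+1)$ immediately gives the lower bound $|Sp_{\HVS}(\reg{L})\cap(x,x+1)|\le |Sp_{\MHS,\infty}\cap(x,x+1)|$, with no further argument needed.

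For the upper bound, the key observation is that $|E|=\sum_{i=1}^\nu (n_i'-1) = \Irr$ by Proposition~\ref{prop:SS}(b). Hence $|E\cap(x,x+1)|\le |E|=\Irr$, which yields $|Sp_{\MHS,\infty}\cap(x,x+1)|\le |Sp_{\HVS}(\reg{L})\cap(x,x+1)|+\Irr$. The same reasoning applies verbatim with $(x,x+1)$ replaced by the complementary region $(0,x)\cup(x+1,2]$ (i.e.\ $Sp\setminus[x,x+1]$): the additivity of cardinalities over the disjoint-union decomposition of the spectrum still holds, and the crude bound $|E\cap(\,\cdot\,)|\le\Irr$ still applies, giving the analogous inequality stated at the end.

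The only mild subtlety worth a remark is that one should check the extra numbers $\frac{k}{n_i'}$ indeed lie in the relevant range so that they are distributed among the two pieces $(x,x+1)$ and $(0,x)\cup(x+1,2]$ rather than contributing elsewhere; but since each such number lies in $(0,1)\subset(0,2)$, it is automatically accounted for in one of the two sets for every $x\in[0,1]$, and the total count $|Sp_{\MHS,\infty}| = |Sp_{\HVS}(\reg{L})| + \Irr$ is consistent with this. There is no real obstacle here: the statement is a formal bookkeeping consequence of Theorem~\ref{prop:finalspeccompare} together with the identity $\sum_i(n_i'-1)=\Irr$, and the proof is two or three lines.
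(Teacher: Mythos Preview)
Your proposal is correct and is precisely the intended argument: the paper states Corollary~\ref{cor:diff} without proof immediately after Theorem~\ref{prop:finalspeccompare}, treating it as an obvious consequence of the multiset decomposition together with $|E|=\sum_i(n_i'-1)=\Irr$ from Proposition~\ref{prop:SS}(b). Your last paragraph about the extra numbers lying in $(0,1)$ is harmless but unnecessary --- the bound $|E\cap A|\le |E|=\Irr$ holds for any $A$ whatsoever, so no range check is needed.
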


\subsection{An example.}\label{sec:toput}
The above discussion might have been technically quite involved. We want to illustrate the occuring phenomena by investigating
one example, the Brian\c{c}on polynomial, which appeared in \cite{ACL,Br,Di,DN}
 (we remark that in \cite[Exemple 4.14]{AC} there
is a different polynomial called Brian\c{c}on polynomial,
it has different link at infinity and different irregular fibers).

The splice diagram  of the fundamental link at infinity is as follows

\begin{pspicture}(-7,-3)(5,1)
\pscircle[fillstyle=solid,fillcolor=black](-1,0){0.1}
\pscircle[fillstyle=solid,fillcolor=black](1,0){0.1}
\pscircle[fillstyle=solid,fillcolor=black](3,0){0.1}
\pscircle[fillstyle=solid,fillcolor=black](1,-2){0.1}
\pscircle[fillstyle=solid,fillcolor=black](3,-2){0.1}
\psline{->}(-1.1,0)(-2.75,0)
\psline{->}(3.1,0)(5,0)
\psline(-0.9,0)(0.9,0)\psline(1.1,0)(2.9,0)
\psline(1,-0.1)(1,-1.9)\psline(3,-0.1)(3,-1.9)
\rput(3.2,-0.2){\psscalebox{0.8}{$3$}}
\rput(1.2,-0.2){\psscalebox{0.8}{$2$}}
\rput(2.5,0.2){\psscalebox{0.8}{$-7$}}
\rput(0.5,0.2){\psscalebox{0.8}{$-1$}}
\rput(3,0.4){\psscalebox{0.8}{$(3)$}}
\rput(1,0.4){\psscalebox{0.8}{$(2)$}}
\rput(3,-2.4){\psscalebox{0.8}{$(1)$}}
\rput(1,-2.4){\psscalebox{0.8}{$(1)$}}
\rput(-3,0.0){\psscalebox{0.8}{$(4)$}}
\rput(5.3,0.0){\psscalebox{0.8}{$(1)$}}
\rput(-1,0.4){\psscalebox{0.8}{root}}
\end{pspicture}

Here the numbers in parentheses are the multiplicities of the vertices and arrowheads (link components).
The numbers not in parenthesis denote the weights
of corresponding edges (those omitted equal 1).
We have $n_1=4$, $n'_1=\gcd(4,6)=2$, and $n_2=n'_2=1$.

Computing the Euler characteristics of a minimal Seifert surface of $L_{fund}$
(as in \cite{EN}) we get that this surface
is a three times punctured torus  ($3$ is the number of
components of $L_{fund}$).  The rank of $H_1(Y_\infty)$ is $4$, while
the ranks of $U_0$ and $(U_{ndeg})_{\lambda=1}$ are 1.
The monodromy at infinity permutes $L_{1,1}$, $L_{1,2}$ and fixes $L_2$. The characteristic polynomial
of the monodromy on boundary components is therefore $t^2-1$. The Alexander polynomial of $L_{fund}$,
hence the characteristic polynomial of the monodromy at infinity, is
$(t^2-1)(t^2+t+1)$.

The equivariant signatures (which correspond to jumps of the Tristram--Levine signatures) of $L_{fund}$ can
be computed using \cite[Theorem 5.3 and Section 6]{Neu-spli}.
Using \cite[Theorem 5.3]{Neu-spli} for the left-most splice component we compute that $\sigma^{-}_{e^{2\pi i/3}}=-1$ and $\sigma^{-}_{e^{-2\pi i/3}}=1$
so the jumps of the Tristram--Levine signature are respectively $-2$ and $2$, in other words $p^1_{e^{2\pi i/3}}(+1)=0$, $p^1_{e^{2\pi i/3}}(-1)=1$,
$p^1_{e^{-2\pi i/3}}(+1)=1$, $p^1_{e^{-2\pi i/3}}(-1)=0$ (compare \cite[Sections 3.5, 3.6]{Br}).
On the other hand, a straightforward computation shows that the right splice component does not
contribute to the equivariant signature at all.
Hence, the non--trivial H--numbers are $p^1_{e^{2\pi i/3}}(-1)=p^1_{e^{-2\pi i/3}}(+1)=p_1^1(1)=1$.

Concluding, the spectrum at infinity is equal  to
$\{\frac 23,\frac43,\frac12,1\}$ (cf. \cite[Example 3.6(ii)]{Di}), where
$\{\frac23,1,\frac43\}$ is the contribution from  $\reg{L}$.
%$\{\frac23,\frac43\}$ also from $\mv_W$, $\{\frac12,1\}$ from the action of the monodromy at infinity.

\subsection{The definiteness of `linking matrix'. The proof of (\ref{prop:HN})(c)}\label{sec:negative}
We wish to prove that the restriction of $S_{fund}$  on $U_{\lambda=1}$ is negative definite.
This follows from a more general combinatorial result which we now state.

Let $\Gamma$ be a rooted Eisenbud--Neumann diagram, cf. \cite{Neu1}.
For an edge we call the weight which is closer to the root vertex the \emph{near weight} and the
other one the \emph{far weight}. For any two nodes $v$ and $w$,  if the geodesics connecting $w$ and the root
vertex contains $v$ then we say that $w$ is  \emph{beyond $v$}.
We allow more than one near weight at each node to have weight different than $1$.
The linking numbers and multiplicities are determined from the diagram as in \cite[\S 10,11]{EN}.
The arrowhead vertices will be denoted by $L_1,\dots,L_\nu$, their multiplicities are
$n_1,\dots,n_\nu$.

Let ${\mathbb Q}^\nu$ be the ${\mathbb Q}$--vector space generated by $\{L_i\}_i$,
The {\it linking matrix} $\{\lk(L_i,L_j)\}_{ij}$ is defined as follows: for $i\not= j$ it is the standard
linking pairing, while the self--linking $\lk(L_i,L_i)$ is defined via the identity
$\lk(L_i,\sum_j n_jL_j)=0$. Equivalently,
\begin{equation}\label{eq:helpful}
\lk(n_iL_i,n_iL_i)=-\sum_{j\neq i}\lk(n_iL_i,n_jL_j).
\end{equation}
In particular, the null--space of the linking matrix is  at least 1--dimensional.
\begin{proposition}\label{prop:negative}
Let $\Gamma$ be a rooted connected graph with the following properties
\begin{itemize}
\item[(a)] all near weights are positive and no far weight is allowed to be zero.
\item[(b)] if the far weight at a node $v$ is negative, then all far weights of nodes beyond $v$ are also negative (this property is weaker
than negativity of edge determinants);
\item[(c)] the multiplicities of all arrowhead and non--arrowhead vertices are positive.
\end{itemize}
Then, the linking matrix $\lk(L_i,L_j)$ is negative semi--definite with 1--dimensional null--space.
\end{proposition}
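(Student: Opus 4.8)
The plan is to prove the statement by induction on the number of nodes of $\Gamma$, peeling off the node adjacent to the root and using the block structure of the linking matrix that this peeling induces. First I would record the basic formula for linking numbers from \cite[\S 10, 11]{EN}: for arrowheads $L_i, L_j$ the linking number $\lk(L_i,L_j)$ is a product, over the edges on the geodesic joining them, of the weights adjacent to that geodesic on the side \emph{away} from the path, divided by the appropriate edge determinant products. The key observation, which drives the induction, is that if $v$ is the unique node adjacent to the root and $e$ is the edge from $v$ toward the root with near weight $a>0$ (by hypothesis (a)) and far weight $w$, then $\Gamma\setminus\{$root edge$\}$ splits at $v$ into subtrees $\Gamma_1,\dots,\Gamma_m$ hanging off $v$, each of which is again a rooted graph (rooted now at $v$) satisfying hypotheses (a)--(c); the arrowheads partition accordingly into groups $I_1,\dots,I_m$, together with possibly arrowheads attached directly to $v$.

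Second, I would analyze how the linking matrix decomposes under this partition. For two arrowheads $L_i, L_j$ lying in the \emph{same} group $I_s$, the linking number $\lk(L_i,L_j)$ coincides with the linking number computed inside $\Gamma_s$ up to a common positive scalar factor (coming from the edge $e$ and the structure above $v$), plus a rank-one correction; for $L_i\in I_s, L_j\in I_t$ with $s\neq t$, the geodesic passes through $v$, so $\lk(L_i,L_j)$ factors as (something depending only on $i$)$\,\times\,$(something depending only on $j$)$\,\times\,$(a scalar), i.e. the cross-block is rank one. Concretely, writing $n^{(s)}=\sum_{i\in I_s}n_i$ for the total multiplicity flowing into $\Gamma_s$, the multiplicity $m_v$ at $v$ satisfies $m_v = w^{-1}$ (suitably normalized) times a positive combination, and the whole matrix $\Lambda=(\lk(L_i,L_j))$ can be written as a block-diagonal piece $\bigoplus_s c_s \Lambda^{(s)}$ (with $c_s>0$ and $\Lambda^{(s)}$ the linking matrix of $\Gamma_s$, which is negative semidefinite with $1$-dimensional kernel by the inductive hypothesis) plus a correction term of the form $\pm \frac{1}{\text{(edge det)}}\, u u^T$ where $u$ is the vector with $u_i = n_i/n^{(s)}$-type entries; the sign of this rank-one term is governed precisely by the sign of the far weight $w$ at $v$, and hypothesis (b) guarantees consistency of signs as we recurse. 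I would then check, using identity \eqref{eq:helpful}, that the vector $(n_1,\dots,n_\nu)$ lies in the kernel of $\Lambda$, so the null-space is at least $1$-dimensional; and that it is at most $1$-dimensional by tracking how the kernels of the blocks $\Lambda^{(s)}$ (each spanned by the local multiplicity vector) get glued by the rank-one term — the global multiplicity vector is the unique surviving kernel direction because the off-diagonal rank-one coupling kills every other combination of the local kernels, while the definite parts of the blocks handle the rest.

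Third, the semidefiniteness itself: on each block, $c_s\Lambda^{(s)}$ is negative semidefinite by induction; the rank-one correction is a single $\pm uu^T$, and I must show the sign works out to $-uu^T$ (or that a $+uu^T$ term is absorbed), which is exactly where hypothesis (b) — negativity of far weights propagating beyond $v$ — is used, together with positivity of near weights and multiplicities from (a), (c). This is where the interlacing/Cauchy-type argument enters: adding a negative semidefinite rank-one term to a negative semidefinite matrix keeps it negative semidefinite and increases the nullity by at most one, but here the added direction $u$ is \emph{not} in the common kernel (since it mixes the groups nontrivially relative to the local kernels), so the nullity does not increase — it stays at $1$.

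The main obstacle I expect is the bookkeeping in the inductive step: precisely identifying the scalar factors $c_s$ and the rank-one vector $u$ in terms of the Eisenbud--Neumann combinatorics (edge determinants, near/far weights, multiplicities), and verifying the sign of the rank-one correction is the "good" one under hypothesis (b). In particular the case where the far weight $w$ at $v$ is positive but some far weight further out is negative must be excluded by (b), and one must be careful that "beyond $v$" in the paper's sense matches the direction of the peeling; getting these orientations and the base case (a single node, where $\Lambda$ is essentially the negative of a weighted star/Seifert matrix, manifestly negative semidefinite with the multiplicity vector in the kernel via \eqref{eq:helpful}) exactly right is the crux. Everything else is linear algebra of the form "negative semidefinite $\oplus$ negative semidefinite $+$ one good rank-one term."
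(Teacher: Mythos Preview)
Your approach is genuinely different from the paper's, and as written it has a real gap.

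The paper does \emph{not} peel from the root. It works from the terminal end, using two local moves at nodes with no node beyond them: a \emph{collapse} (a terminal node supporting a single arrowhead is replaced by an arrowhead with adjusted multiplicity; the linking matrix is preserved up to congruence) and a \emph{squeeze} (two arrowheads at a terminal node are merged into one; the linking matrix splits as a $1$--dimensional negative definite piece plus the linking matrix of the reduced diagram). Iterating these reduces to at most two arrowheads, which is handled directly via the identity \eqref{eq:helpful2} and a lemma that self--linkings are negative.

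The gap in your plan is the sign of the rank--one correction. With your decomposition, the coefficient in front of $uu^T$ is, up to the positive factor $\prod_r\beta_r$, the far weight $\alpha$ at the node $v$ adjacent to the root. Hypothesis (b) does \emph{not} force $\alpha<0$: it says only that \emph{if} $\alpha<0$ then all far weights beyond $v$ are negative; it places no constraint when $\alpha>0$. In that case your correction is $+uu^T$, and a negative semidefinite block--diagonal matrix plus a positive rank--one term need not be negative semidefinite. You note that such a term might be ``absorbed'' but give no mechanism for this. By contrast, the paper's squeeze handles both signs of $\alpha$ by separate explicit computations (using negativity of self--linkings when $\alpha>0$, and the positive--multiplicity hypothesis (c) via \eqref{eq:estimate-lk} when $\alpha<0$); this is precisely why working at terminal nodes is the right direction.

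There is a second, smaller issue: you assert that each subtree $\Gamma_s$, rooted at $v$, again satisfies (a)--(c). Conditions (a) and (b) are local and survive, but (c) is global: the multiplicity of a vertex is $\sum_j \lk(\,\cdot\,,n_jL_j)$ summed over \emph{all} arrowheads, so cutting at $v$ changes the multiplicities inside $\Gamma_s$ and positivity is no longer automatic. To keep multiplicities unchanged you would have to attach a phantom arrowhead at $v$ carrying the missing contribution, but then $\Gamma_s$ has one more arrowhead than $I_s$ and your ``block'' is no longer the linking matrix restricted to $I_s$. The paper's collapse and squeeze are designed so that all multiplicities of the surviving vertices are preserved exactly, which is what makes the induction on (c) clean.
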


\begin{proof}
We begin with a following special case.
\begin{lemma} The statement of Proposition~\ref{prop:negative} holds
if $\lk(L_i,L_j)>0$ for all $i\neq j$.
\end{lemma}
\begin{proof}
The reasoning is exactly as in \cite[\S 3]{Neu-inv}: for $L=\sum \ell_jn_jL_j$ one has
\[\lk(L,L)=\sum_{i<j}2\ell_i\ell_j\lk(n_iL_i,n_jL_j)+\sum_{i}\ell_i^2\lk(n_iL_i,n_iL_i).\]
Substituting \eqref{eq:helpful}, we get
\begin{equation}\label{eq:helpful2}
\lk(L,L)=-\sum_{i<j}(\ell_i-\ell_j)^2\lk(n_iL_i,n_jL_j).
\end{equation}
Hence $\lk(L,L)$ is zero if $\ell_1=\dots=\ell_n$, and negative otherwise.
\end{proof}

In general, if some far weights are negative, some of the linking numbers
$\lk(L_i,L_j)$ might be negative too; in these cases the proof is more involved.
\begin{lemma}\label{lem:negself}
%Let $L_i$ be an arrowhead adiacent to $v_i$, such that there is no node beyond $v_i$. Then
If $\nu\geq 2$ then the self--linking number $\lk(L_i,L_i)$ is negative
for any $i$.
\end{lemma}
\begin{proof}
 For each $i$, let $v_i$ be a node supporting $L_i$, $\alpha_i$
denotes the far weight at $v_i$ and $\beta_{i1},\dots,\beta_{ik_i}$
the near weights at $v_i$, with $\beta_{i1}$ the near weight on the edge supporting $L_i$.
%(It is possible that $v_i=v_j$ for $i\not=j$.)

If  $\lk(L_i,L_j)>0$ for all $j\neq i$, then the statement follows from \eqref{eq:helpful}.
Hence, assume that $\lk(L_i,L_j)<0$  for some $j$. Assume that $L_i$ and $L_j$ are supported by
nodes $v_i$ and $v_j$ respectively (the case $v_i=v_j$ is also possible).
Let $\gamma$ be a path in $\Gamma$ joining $L_i$ to $L_j$.
Since $\lk(L_i,L_j)<0$, one of the vertices lying on $\gamma$, call it $v_\gamma$, must
have a negative weight. This, by  assumption (a), must be a far weight, hence there is a unique
$v_\gamma$ along the path with this property. Now, if $v_i$ is  beyond $v_\gamma$, then
by (b) we have $\alpha_i<0$. Otherwise, $v_i=v_\gamma$ and $\alpha_i<0$ by the definition of $v_\gamma$.
Next, let $M_i$ be the multiplicity of $v_i$, namely
\[M_i=\sum_{j} \lk(v_i,n_jL_j)=\alpha_i\beta_{i2}\dots\beta_{ik_i}n_i+\sum_{j\neq i}\lk(v_i,n_jL_j).\]
But for $j\neq i$ one has  $\lk(v_i,n_jL_j)=\beta_{i1}\lk(L_i,n_jL_j)$, hence
\begin{equation}\label{eq:estimate-lk}
-\lk(L_i,L_i)=\frac{1}{\beta_{i1}n_i}\sum_{j\neq i}\lk(v_i,n_jL_j)=\frac{M_i}{n_i\beta_{i1}}-
\frac{\alpha_i\beta_{i2}\dots\beta_{ik_i}}{\beta_{i1}}>0
\end{equation}
as $M_i>0$.
\end{proof}
\begin{corollary}\label{cor:oneortwo}
If the diagram has one or two arrowheads, then the statement of Proposition~\ref{prop:negative} holds.
\end{corollary}
\begin{proof}
Use Lemma \ref{lem:negself} and the fcat that the null--space is not trivial.
%If there is only one arrowhead, $L_1$, then the identity $\sum n_iL_i=0$ boils down to $L_1=0$,
%so the linking matrix is a zero matrix and we are done. If there are two arrowheads, $L_1$ and $L_2$,
%then by Lemma~\ref{lem:negself} we have $\lk(L_1,L_1)<0$. The linking matrix is a symmetric
%$2\times 2$ matrix with determinant zero (because of the relation $n_1L_1+n_2L_2=0$) and negative top left element,
%so it is negative semidefinite with one dimensional kernel.
\end{proof}

The proof is based on induction via reduction of the diagram (via two operations).
%Let us define two operations on such diagrams.

\begin{definition}
Let $\Gamma$ be a rooted graph. Assume that the supporting  node $v_i$ of
the arrowhead vertex $L_i$ has the following properties: it is not the root vertex,
there is no node beyond it, $L_i$ is the unique arrowhead supported by
$v_i$. Hence, all its  adjacent vertices  except $L_i$ and another one (in the direction of the root)
are leaves.  As above, denote the valency of $v_i$
by $k_i+1$ (see the picture below).

 A \emph{collapse}
of $v_i$ is a graph $\Gamma'$ with $v_i$ replaced by an arrowhead vertex $L_i'$ with multiplicity $n_i\beta$, where $\beta=\beta_{i2}\cdots\dots\cdot\beta_{ik_i}$
and all other weights and multiplicities are unchanged.
\end{definition}

\begin{pspicture}(-7,-3)(5,1.5)
\psline[linestyle=dotted](-6.9,0)(-6.6,0)
\psline{->}(-6.5,0)(-1,0)\pscircle[fillstyle=solid,fillcolor=black](-5,0){0.1}
\pscircle[fillstyle=solid, fillcolor=black](-3,0){0.1}
\psline(-5,0)(-5,-2)\pscircle[fillstyle=solid, fillcolor=black](-5,-2){0.1}
\rput{30}(-5.0,0){\psline(0,0)(1.0,0)\psline[linestyle=dotted](1.0,0)(1.3,0)}
\rput{60}(-5.0,0){\psline(0,0)(1.0,0)\psline[linestyle=dotted](1.0,0)(1.3,0)}
\psarc[linestyle=dotted](-5.0,0){0.4}{75}{135}
\rput{150}(-5.0,0){\psline(0,0)(1.0,0)\psline[linestyle=dotted](1.0,0)(1.3,0)}
\rput(-3.4,0.15){\psscalebox{0.8}{$\alpha_i$}}
\rput(-2.6,0.15){\psscalebox{0.8}{$\beta_{i1}$}}
%\psline(-3,0)(-3,-2)\rput(-2.7,-0.25){\psscalebox{0.8}
%{$\beta_{i2}$}}\pscircle[fillstyle=solid,fillcolor=black](-3,-2){0.1}
\rput{-35}(-3,0){\psline(0,0)(2,0)\rput{35}(0.7,0.2)
{\psscalebox{0.8}{$\beta_{i2}$}}\pscircle[fillstyle=solid,fillcolor=black](2,0){0.1}}
\rput{-60}(-3,0){\psline(0,0)(2,0)\rput{60}(0.8,0.2)
{\psscalebox{0.8}{$\beta_{i3}$}}\pscircle[fillstyle=solid,fillcolor=black](2,0){0.1}}
\rput{-135}(-3,0){\psline(0,0)(2,0)\rput{135}(0.7,0.3)
{\psscalebox{0.8}{$\beta_{ik_i}$}}\pscircle[fillstyle=solid,fillcolor=black](2,0){0.1}}
\psarc[linestyle=dotted](-3,0){2.0}{232}{292}
\rput(-1.2,0.2){\psscalebox{0.8}{$\mathbf{L_i}$}}
\rput(-0.6,0){\psscalebox{0.8}{$(n_i)$}}
%%%
\psline[linestyle=dotted](2.1,0)(2.4,0)
\psline{->}(2.5,0)(6,0)\pscircle[fillstyle=solid,fillcolor=black](4,0){0.1}
\pscircle[fillstyle=solid, fillcolor=black](4,-2){0.1}
\psline(4,0)(4,-2)
\rput{30}(4.0,0){\psline(0,0)(1.0,0)\psline[linestyle=dotted](1.0,0)(1.3,0)}
\rput{60}(4.0,0){\psline(0,0)(1.0,0)\psline[linestyle=dotted](1.0,0)(1.3,0)}
\psarc[linestyle=dotted](-5.0,0){0.4}{75}{135}
\rput{150}(4.0,0){\psline(0,0)(1.0,0)\psline[linestyle=dotted](1.0,0)(1.3,0)}
\rput(5.8,0.2){\psscalebox{0.8}{$\mathbf{L_i'}$}}
\rput(6.6,0.0){\psscalebox{0.8}{$(\beta n_i)$}}
%%%
\psline[linewidth=1.5pt]{->}(-0.2,0)(1.8,0)\rput(0.9,0.15){\psscalebox{0.8}{Collapse}}
\end{pspicture}
\begin{lemma}
The linking matrices of $\Gamma$ and $\Gamma'$ are congruent.
Moreover, if $\Gamma$ satisfies the assumptions (a), (b) and (c) of the proposition,
then so does $\Gamma'$.
\end{lemma}
\begin{proof}
We shall use the notation $\lk_\Gamma$ and $\lk_{\Gamma'}$ for the linking forms on $\Gamma$ and $\Gamma'$.

For any vertex $v$ (being  node or arrowhead), different from the deleted ones, we have $\lk_\Gamma(v,L_i)=\lk_{\Gamma'}(v,\beta L_i')$. We
claim that $\lk_{\Gamma'}(\beta L_i,\beta L_i)=\lk_{\Gamma}(L_i,L_i)$. This follows  from
 \eqref{eq:helpful} applied for $\Gamma$ and $\Gamma'$,
and from the fact that in  $\Gamma'$
the relation $\beta n_iL_i+\sum_{j\neq i}n_jL_j=0$ holds.
Thus, the linking matrix of $\Gamma$
written in a basis, $L_1,\dots,L_i,\dots,L_\nu$ is the same as the linking matrix of $\Gamma'$ in the basis
$L_1,\dots,\beta L_i',\dots,L_\nu$.
This proves the first part. As for the other part, the multiplicities of all vertices (besides the deleted ones) are
preserved. This shows that if $\Gamma$ satisfies (c), then so does $\Gamma'$, while (a) and (b) are obvious.
\end{proof}

\begin{definition}
Let $v_0$ be a node with no other node beyond it. Let $L_1,\dots,L_k$ be the arrowheads
adjacent to $v_0$ ($k\geq 2$),
denote their multiplicities by $n_1,\dots,n_k$. $v_0$ might have several adjacent leaves as well,
$\beta$ denotes the product of their near weights.
%Let the near weight of the edge joining $v_0$ with a leaf be denoted by $\beta_0$
%If $v_0$ is not adjacent to any leaf, we put $\beta=1$. $\alpha$ denotes the far weight of $v_0$.
Assume that the overall number of vertices of $\Gamma$ is at least three. % (this we assume for technical reasons).

A \emph{squeeze} of $\Gamma$ is a graph arising from $\Gamma$ by replacing
two  arrowheads supported by $v_0$ (say, $L_1$ and $L_2$) by a single one, denoted by  $L_s$,
which will gain  multiplicity
\[n_s:=n_2\beta_1+n_1\beta_2\]
and the near weight  $\beta_s:=\beta_{1}\beta_2$.
\end{definition}

\begin{pspicture}(-7,-3)(7,2.5)
\psline[linestyle=dotted](-5.9,0)(-5.6,0)
\psline(-5.5,0)(-3.5,0)
\pscircle[fillstyle=solid,fillcolor=black](-3.5,0){0.1}
\rput{300}(-3.5,0){\psline(0,0)(2,0)\pscircle[fillstyle=solid,fillcolor=black](2,0){0.1}}
\rput{210}(-3.5,0){\psline(0,0)(2,0)\pscircle[fillstyle=solid,fillcolor=black](2,0){0.1}}
\psarc[linestyle=dotted](-3.5,0){1.0}{215}{295}
%\psline(-3.5,0)(-3.5,-2)\pscircle[fillstyle=solid,fillcolor=black](-3.5,-2){0.1}
\rput(-3.7,0.15){\psscalebox{0.8}{$\alpha$}}
%\rput(-3.3,-0.4){\psscalebox{0.8}{$\beta_0$}}
%
\rput{330}(-3.5,0){\psline{->}(0,0)(2,0)\rput{30}(2.3,0){\psscalebox{0.8}{$(n_1)$}}
\rput{30}(0.6,0.2){\psscalebox{0.8}{$\beta_1$}}}
\rput{10}(-3.5,0){\psline{->}(0,0)(2,0)\rput{-10}(2.3,0){\psscalebox{0.8}{$(n_{2})$}}
\rput{-10}(0.6,0.2){\psscalebox{0.8}{$\beta_{2}$}}}
\psarc[linestyle=dotted](-3.5,0){1.0}{15}{95}
\rput{100}(-3.5,0){\psline{->}(0,0)(2,0)\rput{-100}(2.3,0){\psscalebox{0.8}{$(n_k)$}}
\rput{-100}(0.6,0.2){\psscalebox{0.8}{$\beta_k$}}}
\psline[linestyle=dotted](2.1,0)(2.4,0)\psline(2.5,0)(4,0)
\pscircle[fillstyle=solid,fillcolor=black](4,0){0.1}
\rput{300}(4,0){\psline(0,0)(2,0)\pscircle[fillstyle=solid,fillcolor=black](2,0){0.1}}
\rput{210}(4,0){\psline(0,0)(2,0)\pscircle[fillstyle=solid,fillcolor=black](2,0){0.1}}
\psarc[linestyle=dotted](4,0){1.0}{215}{295}
%\psline(4,0)(4,-2)\pscircle[fillstyle=solid,fillcolor=black](4,-2){0.1}
\rput(3.7,0.15){\psscalebox{0.8}{$\alpha$}}
%\rput(4.2,-0.2){\psscalebox{0.8}{$\beta_0$}}
%\rput(4.3,0.15){\psscalebox{0.8}{$\beta_s$}}
%\rput(6.6,0){\psscalebox{0.8}{$(m_s)$}}
\rput{10}(4,0){\psline{->}(0,0)(2,0)\rput{-10}(2.3,0){\psscalebox{0.8}{$(n_s)$}}
\rput{-10}(0.6,0.2){\psscalebox{0.8}{$\beta_s$}}}
\rput{50}(4,0){\psline{->}(0,0)(2,0)\rput{-50}(2.3,0){\psscalebox{0.8}{$(n_{3})$}}
\rput{-50}(0.6,0.2){\psscalebox{0.8}{$\beta_{3}$}}}
\psarc[linestyle=dotted](4,0){1.0}{55}{95}
\rput{100}(4,0){\psline{->}(0,0)(2,0)\rput{-100}(2.3,0){\psscalebox{0.8}{$(n_k)$}}
\rput{-100}(0.6,0.2){\psscalebox{0.8}{$\beta_k$}}}
\psline[linewidth=1.5pt]{->}(-1.0,0)(1.5,0)\rput(0.25,0.15){\psscalebox{0.8}{Squeeze}}
\end{pspicture}

\begin{lemma}\label{lem:collapse}
Let $\Gamma'$ be a squeeze of arrowhead $L_1$ and $L_{2}$ from $\Gamma$.
%Assume that $\Gamma$ has at least three arrowheads
If $\Gamma$ satisfies the assumptions (a), (b) and (c) of the proposition,
then so does $\Gamma'$. Moreover, the rational linking matrix of $\Gamma$ is a
direct  sum of the linking matrix of $\Gamma'$ and a negative definite 1--dimensional matrix.
\end{lemma}
\begin{proof}

As for the first part we observe that $\beta_s$ and $n_s$ were chosen in such a way that all multiplicities of vertices are preserved. Moreover,
by construction we have
\begin{equation}\label{eq:GeqGp}\begin{array}{ll}
\lk_{\Gamma'}(L_i,L_j)=\lk_{\Gamma}(L_i,L_j) & \text{ if }\{i,j\}\cap\{1,2\}=\emptyset\text{ and }i\neq j,\\
\lk_{\Gamma'}(n_sL_s,L_j)=\lk_{\Gamma}(n_1L_1+n_2L_2,L_j) & \text{ if } \ j\geq 3.\end{array}
\end{equation}
We claim $\lk_{\Gamma'}(L_j,L_j)=\lk_{\Gamma}(L_j,L_j)$ for $j\geq 3$. Indeed, this follows from
(\ref{eq:GeqGp}) and (\ref{eq:helpful}) applied for both graphs.
%the self-linking can be computed from multiplicities of the adiacent
%node as in \eqref{eq:estimate-lk}. But the multiplicities of nodes are preserved. The claim follows easily.\footnote{[M] To %be extremely precise
%we would need to show that $\beta_{i2}\dots\beta_{ik_i}$ is unchanged but this requires much more work}
Now, let us define
\[\Lambda_1=\beta_1L_1-\beta_2L_2\ \ \ \mbox{and} \ \ \ \Lambda_2=xL_1+yL_2,\]
where the rational numbers $x$ and $y$ will be determined later. By definition,
\[ \lk_\Gamma (\Lambda_1, L_j)=0 \ \ \ \mbox{for any $j\geq 3$}.\]
The self-linking of $\Lambda_1$ is equal to
\[\lk_\Gamma(\Lambda_1,\Lambda_1)=\beta_1^2\lk_\Gamma(L_1,L_1)+
\beta_2^2\lk_\Gamma(L_2,L_2)-2\alpha\beta\beta_1\beta_2\dots\beta_k.\]
If $\alpha>0$, then the above expression is negative, because
$\lk_\Gamma(L_1,L_1)$ and $\lk_\Gamma(L_2,L_2)$ are negative  by Lemma~\ref{lem:negself}. If $\alpha<0$,
we use \eqref{eq:estimate-lk} to show that
$\lk_\Gamma(\Lambda_1,\Lambda_1)=-M_{v_0}(\frac{\beta_1}{n_1}+\frac{\beta_2}{n_2})<0$, because
the multiplicity of $M_{v_0}$ is positive. Hence $\lk_\Gamma(\Lambda_1,\Lambda_1)<0$ always.

Since $\lk_\Gamma(\Lambda_1,\Lambda_1)<0$ and $\lk_\Gamma(L_1,L_2)\not=0$,
there exist $x$ and $y$ such that $\lk_\Gamma(\Lambda_2,\Lambda_1)=0$
and $\Lambda_1,\Lambda_2$ are linearly independent.
Such $x$ and $y$ are determined up to a multiplicative constant. To choose it observe that
\[\lk_\Gamma(\Lambda_2,L_j)=x\lk_\Gamma(L_1,L_j)+y\lk_\Gamma(L_2,L_j)=
\left(x+y\frac{\beta_1}{\beta_2}\right)\lk_\Gamma(L_1,L_j)\]
and $\lk_{\Gamma'}(L_s,L_j)=\frac{1}{\beta_{2}}\lk_{\Gamma}(L_1,L_j)$. We chose the rational numbers
 $x$ and $y$ so that $x+y\frac{\beta_1}{\beta_2}=\frac{1}{\beta_2}$.
Then, we have for all $j\ge 3$
\begin{equation}\label{eq:linkequal}
\lk_\Gamma(\Lambda_2,L_j)=\lk_{\Gamma'}(L_s,L_j).
\end{equation}

Finally, we show that $\lk_\Gamma(\Lambda_2,\Lambda_2)=\lk_{\Gamma'}(L_s,L_s)$.
This is done as follows. First,
on $\Gamma$ we have the relation $n_1L_1+n_2L_2+\sum n_jL_j=0$, which can be rewritten as
\[\lambda_1\Lambda_1+\lambda_2\Lambda_2+\sum_{j\ge 3}n_jL_j=0\]
for some $\lambda_1$ and $\lambda_2$.
On the other hand, on $\Gamma'$ we have $n_sL_s+\sum_{j\ge 3}n_jL_j=0$. Now taking the linking numbers with $L_r$ for some $r\ge 3$ we obtain
\[0=\sum_{j\ge 3}n_r\lk_\Gamma(L_r,L_j)+\lambda_2\lk_\Gamma(L_r,\Lambda_2)=\sum_{j\ge 3}n_r\lk_{\Gamma'}(L_r,L_j)+n_s\lk_{\Gamma'}(L_r,L_s).\]
Now by \eqref{eq:GeqGp}, since $r\ge 3$ the above equation simplifies to
\[\lambda_2\lk_{\Gamma}(L_r,\Lambda_2)=n_s\lk_{\Gamma'}(L_r,L_s).\]
From \eqref{eq:linkequal} and $\lk_{\Gamma'}(L_r,L_s)\not=0$
it follows that $n_s=\lambda_2$. But then we have
\[\lk_\Gamma(\Lambda_2,\lambda_2\Lambda_2)=-\sum_{j\ge 3}\lk_\Gamma(\Lambda_2,n_jL_j)=-\sum_{j\ge 3}\lk_{\Gamma'}(L_s,n_jL_j)=\lk_{\Gamma'}(L_s,n_sL_s).\]
As $n_s=\lambda_2$ we conclude that $\lk(\Lambda_2,\Lambda_2)=\lk(L_s,L_s)$.
Hence the linking form on $\Gamma$ restricted to $\Lambda_2,L_3,\dots,L_n$ is the same as the linking form on $\Gamma'$
written in basis $L_s,L_3,\dots,L_n$, while the element $\Lambda_1$ splits out completely as an orthogonal summand.
\end{proof}

\smallskip
\noindent\emph{Finishing the proof of Proposition~\ref{prop:negative}.}
By applying collapses and squeezes to $\Gamma$ we end up
with a diagram, for which no further collapse or squeeze is possible.
This  diagram  has one or two arrowheads and we conclude the proof by Corollary~\ref{cor:oneortwo}.
\end{proof}
\begin{remark}
If we assume that the multiplicities of nodes of $\Gamma$ are only non-negative (not just positive),
we can still prove semidefiniteness of the linking matrix, possibly with higher dimensional null--space.
%In the proof of Lemma~\ref{lem:collapse} we might have
% $\lk_{\Gamma}(\Lambda_1,\Lambda_1)=0$, hence the proof becomes slightly more involved.
We omit the details.
\end{remark}

\section{Semicontinuity results}\label{S:results}
Now we are ready to prove various semicontinuity results.
In subsection~\ref{ss:sem-local-case} we recover (in a slightly weaker form) the
classical semicontinuity results valid in the local case
of algebraic plane curve singularities (classically proved by Varchenko in
\cite{Var}, see also \cite{St}). Next, in  \ref{ss:sem-inf-2}, we analyse the behavious of
spectra under a degeneration of affine plane curves
in the spirit of \cite{NS}. Finally,
we consider an affine plane curve, and we
relate the spectrum of a curve at infinity with the spectra of its singularities, see \ref{ss:sem-inf}.
This type of comparison is unknown in Hodge theory.

\subsection{Semicontinuity of the local singularity spectrum}\label{ss:sem-local-case}
Recall that in the local case $Sp_{\MHS}=Sp_{\HVS}$ (cf. \ref{prop:mod2}), which will be denoted just by $Sp$.

Let us consider now the following situation. Let $f_t(x,y)$ be a smooth
 family of holomorphic functions in two local coordinates depending on a local parameter $t$.  Assume that
$f_0(x,y)=0$ has an isolated singularity at the origin. Let us introduce the following notation.
\begin{itemize}
\item $B$ is a
small ball centered at the origin such that $f_0^{-1}(0)$ is transverse to $\partial B$ and
$f_0(z)/|f_0(z)|\colon\partial B\setminus f_0^{-1}(0)\to S^1$  is a Milnor fibration;
\item $L_0=f^{-1}_0(0)\cap \partial B$ is the link of $f_0$ at $0$, and  $Sp_0$ the spectrum of the link;
%, $c_0$ number of components.
\item $t\not=0$ and $|t|$ is  sufficiently small so that  $f_{t}^{-1}(0)\cap \partial B$ is a transversal
intersection, and this link is isotopic in  $\partial B$ to $L_0$;
\item $C=f_{t}^{-1}(0)\cap B$;
\item $z_1,\dots,z_k$ are singular points of $C$, $\LS_1,\dots,\LS_k$ the corresponding local links
 of these singularities,
and $Sp_1,\dots,Sp_k$ are the spectra of $\LS_1,\dots,\LS_k$.
% (respectively, $\oSp_1,\dots,\oSp_k$ are the shifted spectra).
\end{itemize}

%Using our above results we are able to reprove the theorem on semicontinuity of
%spectra (see \cite{St,Var}) in the following form

\begin{proposition}\label{prop:localsemic}
Fix  $x\in [0,1]$ such that $e^{2\pi ix}$ is not a root of the Alexander polynomial of $L_0$. Then
\begin{equation}\label{eq:simplesemic}
\begin{split}
|Sp_0\cap (x,x+1)|&\ge \sum_{j}|Sp_j\cap (x,x+1)|\\
|Sp_0\setminus [x,x+1]|&\ge \sum_{j}|Sp_j\setminus [x,x+1]|.
\end{split}
\end{equation}
\end{proposition}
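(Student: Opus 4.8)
The plan is to deduce the two inequalities in \eqref{eq:simplesemic} from Corollary~\ref{cor:univ} together with the Morse-theoretic surgery inequality of the previous section, specifically the Proposition containing \eqref{eq:diff-r0}. The key observation is that $C=f_t^{-1}(0)\cap B$ is a smoothing of the (reducible, possibly) germ $f_0$ inside the ball $B$: its boundary link $L_0$ is algebraic (so its characteristic polynomial has all roots on the unit circle, by the Monodromy Theorem~\ref{prop:monodlocal}(a)), and $C$ itself is, in the terminology of that Proposition, the $C_{smooth}$ associated with the singular points $z_1,\dots,z_k$ with links $\LS_1,\dots,\LS_k$, Milnor numbers $\mu_j$, and signature functions $\sigma_j$.

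First I would set $\zeta=e^{2\pi ix}$ and, since by hypothesis $\zeta$ is not a root of the Alexander polynomial (hence not a root of $\cp_{L_0}$ by Lemma~\ref{rem:alex}) and likewise $\zeta$ is not a root of any $\cp_{\LS_j}$ after possibly perturbing — actually one should only need $\zeta$ to avoid the roots of $\cp_{L_0}$, because the $\LS_j$-terms will be handled by the inequality form of Corollary~\ref{cor:univ} — apply Corollary~\ref{cor:univ} to $L_0$: $|Sp_0\cap(x,x+1)|=\tfrac12(\deg\cp_{L_0}-\sigma_{L_0}(\zeta))$, and for each $\LS_j$ apply the displayed inequality \eqref{eq:boundspec} to get $|Sp_j\cap(x,x+1)|\le \tfrac12(\deg\cp_{\LS_j}-\sigma_{\LS_j}(\zeta)+\nn_{\LS_j}(\zeta))$. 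Next I would rewrite $\deg\cp$ in each case in terms of the Euler characteristic of the relevant Seifert surface: $\deg\cp_{L_0}=1-\chi(\Sigma_0)-\Irr(L_0)$ and $\deg\cp_{\LS_j}=\mu_j-\Irr(\LS_j)$ (using that the size of a minimal Seifert surface's $H_1$ is $1-\chi$ and Keef's decomposition \eqref{eq:decomp}; here one uses that for a local plane curve link the Milnor number equals the rank of $H_1$ of the Milnor fiber, i.e. $1-\chi$). Since $L_0$ is the boundary of the smooth curve $C=C_{smooth}$, one has $1-\chi(C_{smooth})=1-\chi(\Sigma_0)$ for the minimal Seifert surface, so the $\deg\cp_{L_0}$ term matches the $1-\chi(C_{smooth})$ term on the left of \eqref{eq:diff-r0}.

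Then I would assemble: summing the $\LS_j$ inequalities and subtracting from the $L_0$ identity, the desired inequality $|Sp_0\cap(x,x+1)|\ge\sum_j|Sp_j\cap(x,x+1)|$ reduces, after multiplying by $2$ and rearranging, to exactly the first inequality of \eqref{eq:diff-r0}, namely
\[
-\sigma_{L_0}(\zeta)+n_{L_0}(\zeta)+(1-\chi(C_{smooth}))\ \ge\ \sum_{j=1}^k\bigl(-\sigma_{\LS_j}(\zeta)+n_j(\zeta)+\mu_j\bigr),
\]
once one accounts for the bookkeeping of the $\Irr$ and $\nn$ versus $n$ terms: $\nn_{\LS_j}=n_j-\Irr(\LS_j)$ and $n_{L_0}(\zeta)=\nn_{L_0}(\zeta)=0$ at $\zeta$ (since $\zeta$ is not a root of $\cp_{L_0}$), and $1-\chi(Y_\infty)$-type identities absorb the irregularities. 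The second inequality of \eqref{eq:simplesemic} comes identically from the second inequality of \eqref{eq:diff-r0}, using the $|Sp\setminus[x,x+1]|=\tfrac12(\deg\cp+\sigma)$ half of Corollary~\ref{cor:univ}.

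The main obstacle I anticipate is the careful bookkeeping of the irregularity terms $\Irr(L_0)$ and $\Irr(\LS_j)$ and the normalized-versus-unnormalized nullities: the surgery inequality \eqref{eq:diff-r0} is stated with the genuine nullities $n_L(\zeta)$ and Milnor numbers $\mu_j$, whereas the spectral counting identities from Corollary~\ref{cor:univ} involve $\deg\cp$ (a non-degenerate-part quantity) and implicitly the normalized nullity. One must check that the $\Irr$-corrections cancel consistently — and in the local case this is clean because for an irreducible algebraic link $\Irr=0$, but $L_0$ and the $\LS_j$ may be reducible, so $\Irr$ can be positive; nonetheless $\Irr(L) = c_L - \#\{\text{branches counted once}\}$-type combinatorics should make everything match, in particular the factor $1-\chi(C_{smooth})$ in \eqref{eq:diff-r0} already incorporates the correct Euler-characteristic normalization. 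A secondary point to handle is the hypothesis that $\zeta$ avoids only the roots of the Alexander polynomial of $L_0$, not of the $\LS_j$: this is exactly why we must use the inequality form \eqref{eq:boundspec} for the singular-point contributions rather than the equality form, and one should remark (as in the Remark after Proposition~\ref{prop:sig}) that the inequality persists even when $\zeta$ is a root of some $\cp_{\LS_j}$.
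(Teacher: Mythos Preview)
Your approach is essentially the same as the paper's: combine the Morse--theoretic inequality \eqref{eq:diff-r0} with Corollary~\ref{cor:univ}, using the equality form for $L_0$ (since $\zeta$ is not a root of $\cp_{L_0}$) and the inequality form \eqref{eq:boundspec} for the $\LS_j$.

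Your anticipated obstacle, however, rests on a misconception. You write that ``for an irreducible algebraic link $\Irr=0$, but $L_0$ and the $\LS_j$ may be reducible, so $\Irr$ can be positive''. This is false: every local algebraic link is fibered (by the Milnor fibration), and the Seifert form on the Milnor fiber is unimodular (cf.\ subsection~\ref{ss:MHS-def}), so $\Irr(L_0)=\Irr(\LS_j)=0$ regardless of the number of branches. Consequently $\deg\cp_{L_0}=1-\chi(C_{smooth})$, $\deg\cp_{\LS_j}=\mu_j$, and $n=\nn$ throughout; the bookkeeping you worry about collapses to nothing, and the argument is a direct two--line substitution, exactly as the paper does it.

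One genuine gap: you do not treat $x\in\{0,1\}$. The inequality \eqref{eq:diff-r0} is only available for $\zeta\neq 1$, so this endpoint case needs a separate argument. The paper observes that if $1$ is not a root of the Alexander polynomial of $L_0$ then $L_0$ is a knot (so $1\notin Sp_0$), hence $|Sp_0\cap(0,1)|=\delta_0$, and then uses $\delta_0\ge\sum_j\delta_j\ge\sum_j|Sp_j\cap(0,1)|$.
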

\begin{proof}
Assume $x\neq 0,1$. We shall prove only the first inequality, the second is completely analogous (in Section~\ref{S:morseplane} all
inequalities are given in pairs, the first one we use to prove results about $Sp\cap(x,x+1)$, the other one to prove results about $Sp\setminus[x,x+1]$).
As $\LS_j$ is an algebraic links, $\mu_j$ is the degree of the
Alexander polynomial of $\LS_j$. Hence
$\mu_j-\sigma_{\LS_j}(\zeta)+n_{\LS_j}(\zeta)\ge 2|Sp_j\cap(x,x+1)|$ by Corollary \ref{cor:univ}.

By assumption $n_{L_0}(\zeta)=0$.
Since $L_0$ is also an algebraic link,
$1-\chi(C_{smooth})$ is the degree of the Alexander polynomial of $L_0$. Thus, again by Corollary~\ref{cor:univ}, one gets
$-\sigma_{L_0}(\zeta)+(1-\chi(C_{smooth}))=2|Sp_0\cap(x,x+1)|$.
Then we conclude by the inequality   \eqref{eq:diff-r0}.

If $x\in\{0,1\}$ the assumption that $e^{2\pi ix}$ is not a root of the Alexander polynomial means that
$L_0$ is a knot and $|Sp_0\cap(0,1)|=|Sp_0\cap(1,2)|$ is the delta invariant $\delta_0$.
For any singularity link, hence for $L_j^{sing}$ too,  $\delta_j=|Sp\cap (0,1]|\geq |Sp_j\cap(0,1)|$. Hence
the statement follows from  $\delta_0\ge\sum\delta_j$.
\end{proof}

%Using the notations of Remark~\ref{re:REMARK}, we can prove a different version of Proposition~\ref{prop:localsemic} when
% $e^{2\pi ix}$ can be a root of the Alexander polynomial of $L_0$.
%\begin{proposition}\label{prop:sp-sym}
%Assume that the monodromy of the local singularities $\{(C,z_k)\}_k$ have no Jordan blocks of size 2.
%Then \[\symsp(x)\ge \sum_{k=1}^n\symsp(x).\]
%\end{proposition}
%\begin{proof}
%We start again with the inequality \eqref{eq:diff-r0}, then use \eqref{eq:SPP} (and the remark after it
%specifying the cases when equality holds) for both local and global situations.
%\end{proof}

\subsection{Semicontinuity of  spectrum at infinity of families of affine curves}\label{ss:sem-inf-2}
The methods described in this paper allow us also to prove the results on
semicontinuity of the spectrum at infinity in the sense of \cite{NS}.

Let $F_t\colon\mathbb{C}^2\to\mathbb{C}$
be a smooth family of polynomials with a local deformation parameter $t$.
Let $Sp_t$ be corresponding spectra of $MHS$ at infinity and $\Irr_t$ be the
irregularity of the link at infinity $L^\infty_{t,reg}$ associated with $F_t$.

Note that over  a small punctured  disc $D^*\ni t$ the spectrum  $Sp_t$ is constant.

\begin{theorem}\label{prop:semic}
Fix $x\in[0,1]$ such that $\{x,x+1\}\cap Sp_t=\emptyset$ for $t\in D^*$.
Then
\[|Sp_t\cap(x,x+1)|+\Irr_t\ge |Sp_0\cap(x,x+1)|\]
and the same statement holds for $Sp\setminus [x,x+1]$ instead of $Sp\cap (x,x+1)$.
\end{theorem}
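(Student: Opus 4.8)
The plan is to exhibit the two regular links at infinity $L^\infty_{0,reg}$ and $L^\infty_{t,reg}$ as slices of \emph{one} smooth affine curve by two concentric spheres, apply the smooth Morse surgery inequality of Corollary~\ref{cor:smooth-morse} to that curve, and then translate the resulting signature inequality into spectra using Corollaries~\ref{cor:univ} and~\ref{cor:diff}. Since $Sp_t$ and $\Irr_t$ are constant on $D^*$, it suffices to compare $F_0$ with a single $F_{t_0}$, $t_0\in D^*$ small.

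First I would fix a generic center $\xi\in\CC^2$ and a value $\delta$ which is a \emph{good} value of $F_t$ (a regular value with $F_t^{-1}(\delta)$ regular at infinity) for all $t$ in a small disc around $0$; such a $\delta$ exists because the set of bad pairs $\{(t,c):F_t^{-1}(c)\text{ is not good}\}$ is closed and its fibre over $t=0$ is finite. Next I choose $R$ large enough that $F_0^{-1}(\delta)\cap S^3(\xi,R)=L^\infty_{0,reg}$, then $t_0\neq0$ small enough (depending on $R$) that the smooth curve $C:=F_{t_0}^{-1}(\delta)$ satisfies $C\cap S^3(\xi,R)\cong L^\infty_{0,reg}$ (transversality is open in $t$, and Thom's isotopy lemma applied to $\{(t,z):F_t(z)=\delta,\ |z-\xi|\le R\}$ over the $t$-disc gives the isotopy of the pairs), and finally $R'>R$ large enough that $C\cap S^3(\xi,R')=L^\infty_{t_0,reg}$. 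Set $L_0:=C\cap S^3(\xi,R)$, $L_1:=C\cap S^3(\xi,R')$, write $\cp_0,\cp_1$ for their characteristic polynomials and $\Irr_0,\Irr_1$ for their irregularities, and note $\Irr_1=\Irr_t$. Because $\delta$ is a regular value, $C$ is smooth, so $C_{01}:=C\cap(\overline{B(\xi,R')}\setminus B(\xi,R))$ is a smooth surface and Corollary~\ref{cor:smooth-morse} applies with no singular-point terms. Moreover $C\cap\overline{B(\xi,R')}$ and $C\cap\overline{B(\xi,R)}$ deformation retract onto the generic fibre of $F_{t_0}$ and of $F_0$ respectively, so by Corollary~\ref{prop:degcp}
\[
\chi(C_{01})=\chi\bigl(C\cap\overline{B(\xi,R')}\bigr)-\chi\bigl(C\cap\overline{B(\xi,R)}\bigr)=(\deg\cp_0+\Irr_0)-(\deg\cp_1+\Irr_1).
\]

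Now Corollary~\ref{cor:smooth-morse} gives, for $\zeta=e^{2\pi ix}$, the inequality $-\sigma_{L_1}(\zeta)+n_{L_1}(\zeta)-(-\sigma_{L_0}(\zeta)+n_{L_0}(\zeta))\ge\chi(C_{01})$; substituting the value of $\chi(C_{01})$ and $n_{L_i}(\zeta)=\nn_{L_i}(\zeta)+\Irr_i$ and rearranging yields
\[
\bigl(\deg\cp_1-\sigma_{L_1}(\zeta)+\nn_{L_1}(\zeta)\bigr)+2\Irr_1\ \ge\ \bigl(\deg\cp_0-\sigma_{L_0}(\zeta)+\nn_{L_0}(\zeta)\bigr)+2\Irr_0 .
\]
By Lemma~\ref{lem:Lreg}(d) the polynomials $\cp_0,\cp_1$ have no roots off the unit circle, so Corollary~\ref{cor:univ} applies to both. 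The hypothesis $\{x,x+1\}\cap Sp_t=\emptyset$ forces $\zeta$ not to be a root of $\cp_1$: a root there would, by Definition~\ref{def:spectrum}(a) together with the list of possible H--numbers of a regular link at infinity in Proposition~\ref{prop:HN}, put a spectral number at $x$ or at $x+1$ in $Sp_{\HVS}(L^\infty_{t,reg})\subseteq Sp_t$ (using Corollary~\ref{cor:diff}). Hence $\nn_{L_1}(\zeta)=0$ and the left-hand side above equals $2|Sp_{\HVS}(L^\infty_{t,reg})\cap(x,x+1)|+2\Irr_1$, while by the inequality in Corollary~\ref{cor:univ} the right-hand side is at least $2|Sp_{\HVS}(L^\infty_{0,reg})\cap(x,x+1)|+2\Irr_0$. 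Dividing by $2$ and then using Corollary~\ref{cor:diff} at both ends, namely $|Sp_0\cap(x,x+1)|\le|Sp_{\HVS}(L^\infty_{0,reg})\cap(x,x+1)|+\Irr_0$ and $|Sp_{\HVS}(L^\infty_{t,reg})\cap(x,x+1)|\le|Sp_t\cap(x,x+1)|$, together with $\Irr_1=\Irr_t$, gives $|Sp_t\cap(x,x+1)|+\Irr_t\ge|Sp_0\cap(x,x+1)|$. The statement for $Sp\setminus[x,x+1]$ is obtained verbatim from the second inequalities in Corollaries~\ref{cor:smooth-morse} and~\ref{cor:univ}.

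The step I expect to be the main obstacle is the geometric set-up: arranging a single good value $\delta$ and a single small $t_0$ so that the three radii $0<R<R'$ can be chosen with $C\cap S^3(\xi,R)$ genuinely isotopic to $L^\infty_{0,reg}$ and $C\cap S^3(\xi,R')$ genuinely equal to $L^\infty_{t,reg}$, and the accompanying (routine but slightly delicate) homotopy identifications $C\cap\overline{B(\xi,R')}\simeq Y_\infty$ for $F_{t_0}$ and $C\cap\overline{B(\xi,R)}\simeq Y_\infty$ for $F_0$ that feed the Euler characteristic count through Corollary~\ref{prop:degcp}. Once that is in place, the passage from the Morse inequality and the nullities to the two kinds of length-one spectrum intervals is pure bookkeeping via Corollaries~\ref{cor:univ} and~\ref{cor:diff}.
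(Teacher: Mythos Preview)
Your argument is essentially the same as the paper's: realize both regular links at infinity as slices of a single smooth fibre by concentric spheres, apply Corollary~\ref{cor:smooth-morse}, convert the Euler characteristic via Corollary~\ref{prop:degcp}, and pass to spectra through Corollaries~\ref{cor:univ} and~\ref{cor:diff}. Two small remarks: (i) the paper allows the fibre value for $F_{t_0}$ to differ slightly from the one for $F_0$ (a $w\in W_c$ near $c$) rather than insisting on a common $\delta$, which sidesteps your existence-of-a-common-good-value argument, though your version is fine too; (ii) the Morse inequalities of \S\ref{S:morseplane} are stated only for $\zeta\in S^1\setminus\{1\}$, so the endpoint cases $x\in\{0,1\}$ need a separate reduction (the paper does this by a small shift $x\mapsto x\pm\theta$ as in the proof of Theorem~\ref{thm:semiinf}); you should add a sentence covering that. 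Your deduction that $\zeta$ is not a root of $\cp_1$ from $\{x,x+1\}\cap Sp_t=\emptyset$ via Proposition~\ref{prop:HN} is a nice point that the paper leaves implicit.
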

\begin{proof}
Again we shall assume that $x$ is not an integer, otherwise we use exactly
the same reduction argument as at the end of  the proof of \ref{thm:semiinf}.
We write $\zeta:=e^{2 \pi ix}\in S^1\setminus \{1\}$.

Let us choose  $c$ such that $C_0=F_0^{-1}(c)$ is smooth and regular at infinity.
Furthermore, choose  $\xi$ and $r_0$ such that $S^3(\xi,r_0)\cap C_0$
is the regular link of $F_0$ at infinity, denoted by  $L_0$.
By openness of trasversality condition, there exist $D$,
an open neighbourhood of $0$,  and $W_c$, an open neighbourhood of $c$, such
that for any $w\in W_c$ and $t\in D$, the intersection $F_{t}^{-1}(w)\cap S^3(\xi,r_0)$
is transverse and isotopic to $L_0$. Let us take any $t\in D^*$ and choose $w\in W_c$
such that $C_t=F_t^{-1}(w)$ is smooth and regular at infinity.
Finally, choose  $r_t$ such that $L_t:=S^3(\xi,r_t)\cap C_t$ is the regular link at infinity  of $C_t$.

Since $C_t\cap B(\xi,r_0)$ is isotopic to $C_0\cap B(\xi,r_0)$,
by Corollary~\ref{cor:smooth-morse}   we get
%\begin{equation}\label{eq:wLn0}
%-\sigma_{L_t}(\zeta)+n_{L_t}(\zeta)\ge -\sigma_{L_0}(\zeta)+n_{L_0}(\zeta)+\chi(C\cap(B(\xi,r_t)\setminus B(\xi,r_0))).
%\end{equation}
%we have
%\[1-\chi(C\cap B(\xi,r_t)=1-\chi(C_0\cap B(\xi,r_0))-\chi(C_0\cap (B(\xi,r_t)\setminus B(\xi,r_0))).\]
%Then we get
\[-\sigma_{L_t}(\zeta)+n_{L_t}(\zeta)+1-\chi(C_t\cap B(\xi,r_t))\ge -\sigma_{L_0}(\zeta)+n_{L_0}(\zeta)+1-\chi(C_0\cap B(\xi,r_0)).\]
By assumption, $\zeta$ is not a root of $\Delta_{\Irr_t}(L_t)$.
Hence, applying \ref{prop:degcp} for $F_t$ and $F_0$ we obtain
\[-\sigma_{L_t}+\deg\Delta_{\Irr_t}(L_t)+2\Irr_t\ge -\sigma_{L_0}(\zeta)+\tilde{n}_{L_0}(\zeta)+\deg\Delta_{\Irr_0}+2\Irr_0.\]
Then Corollary~\ref{cor:univ} implies
\[|Sp_{\HVS}(L_t)\cap (x,x+1)|+\Irr_t\ge |Sp_{\HVS}(L_0)\cap (x,x+1)|+\Irr_0.\]
Finally, Corollary~\ref{cor:diff} provides the result.

To show the statement for $Sp\setminus[x,x+1]$ we use the same argument.
\end{proof}

\subsection{Spectrum at infinity of a singular curve}\label{ss:sem-inf}
Let $C\subset\mathbb{C}^2$ be an irreducible plane algebraic curve given by zero set of a reduced polynomial $F$.
Let $z_1,\dots,z_k$ be its singular points and  $Sp_1$,\dots,$Sp_k$ their
(Hodge or HVS) spectra.
Let $Sp_\infty$ be the Hodge--spectrum of $F$ at infinity. Similarly, let  $\reg{L}$ be the
regular link of $F$ at infinity, $Sp_{\HVS}(\reg{L})$ its HVS--spectrum  and $\Irr$ be as defined in \eqref{eq:Ndef}.
\begin{theorem}\label{thm:semiinf}
With the above notations, for all $x\in[0,1]$
such that $e^{2\pi i x}$ is not
a root of the Alexander polynomial of $\reg{L}$
we have
\begin{equation}\label{eq:finiteinf1}\begin{split}
\left\vert Sp_{\HVS}(\reg{L})\cap(x,x+1)\right\vert+\Irr\ge \sum_{j}\left\vert Sp_j\cap (x,x+1)\right|\\
%\end{equation}
%\begin{equation}\label{eq:finiteinf2}
\left\vert Sp_\infty\cap(x,x+1)\right\vert+\Irr\ge \sum_{j}\left\vert Sp_j\cap (x,x+1)\right|.
\end{split}\end{equation}
Moreover, the analogous statement holds if we replace $Sp\cap (x,x+1)$ by
$Sp\setminus [x,x+1]$.
\end{theorem}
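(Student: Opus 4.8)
The plan is to mimic the proof of Theorem~\ref{prop:semic}, replacing the smooth curve $C_0$ by our singular curve $C$ and comparing a small sphere around each singular point with a large sphere at infinity. Fix $x\in(0,1)$ first (the cases $x\in\{0,1\}$ being handled by the $\delta$--invariant reduction, exactly as in the last paragraph of the proof of Proposition~\ref{prop:localsemic}), and set $\zeta=e^{2\pi ix}$. Choose a generic point $\xi\in\mathbb{C}^2$ and radii $0<r_0\ll R$ such that $S^3(\xi,R)\cap C=\reg{L}$ is the regular link at infinity, and such that $S^3(\xi,r_0)\cap C$ is an unknot (this is possible since $\xi$ is generic and $r_0$ small; alternatively, place $r_0$ just above the minimal radius for which $C\cap S^3$ is nonempty, as in the proof of Proposition~\ref{prop:morse-cite}'s second application). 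The singular points $z_1,\dots,z_k$ of $C$ all lie in $B(\xi,R)\setminus B(\xi,r_0)$; let $\LS_j$, $\mu_j$, $c_j$, $\sigma_j(\zeta)$, $n_j(\zeta)$ be their local data.

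Next I would apply Proposition~\ref{prop:morse-cite} (in the $w_L$--form, and separately in the $-u_L$--form for the $Sp\setminus[x,x+1]$ statement) to the annular region $B(\xi,R)\setminus B(\xi,r_0)$. Since $C_{01}=C\cap(B(\xi,R)\setminus B(\xi,r_0))$ is the complement of a neighborhood of the singular points in a piece of $C$, the genus term $g_1-g_0$ and the component counts $k_0,k_1,c_0,c_1$ get organized exactly as in the proof of the unknot--application Proposition after Corollary~\ref{cor:smooth-morse}: using $w_{L_0}\equiv 0$, $c_0=k_0=1$, the genus formula $2(g(C_{smooth})-g(C\cap B(\xi,R)))=\sum_j(\mu_j+c_j-1)$, and the Euler characteristic bookkeeping $b_1(C_{smooth})=2g(C_{smooth})+c_{\reg{L}}-1$, one arrives at
\[
-\sigma_{\reg{L}}(\zeta)+n_{\reg{L}}(\zeta)+(1-\chi(C_{smooth}))\ge \sum_{j=1}^k\big(-\sigma_{\LS_j}(\zeta)+n_j(\zeta)+\mu_j\big),
\]
which is precisely inequality \eqref{eq:diff-r0} with $L=\reg{L}$ and $C\cap B(\xi,r)$ replaced by all of $C\cap B(\xi,R)$. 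This is the step where the Morse-theoretic surgery inequality of \cite{Bo} does the real work.

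Then I would convert both sides into spectral counts. On the right, each $\LS_j$ is an algebraic link, so by Corollary~\ref{cor:univ} (its characteristic polynomial has all roots on the unit circle by the Monodromy Theorem) $\mu_j-\sigma_{\LS_j}(\zeta)+n_j(\zeta)\ge 2|Sp_j\cap(x,x+1)|$. On the left, $\reg{L}$ has $\deg\cp$ equal to $\deg\Delta_{\Irr}$, all of whose roots are roots of unity by Lemma~\ref{lem:Lreg}(d), so Corollary~\ref{cor:univ} applies: since $\zeta$ is not a root of the Alexander polynomial, $n_{\reg{L}}(\zeta)=\Irr$, and by Corollary~\ref{prop:degcp} we have $1-\chi(C_{smooth})=1-\chi(Y_\infty)=\deg\Delta_{\Irr}+\Irr$ — wait, here $C_{smooth}$ is the smoothing inside $B(\xi,R)$ and one must identify $1-\chi(C_{smooth})$ with $1-\chi(Y_\infty)=\deg\Delta_{\Irr}(\reg{L})+\Irr$; this identification (the smoothing of $C\cap B(\xi,R)$ has the homotopy type of the generic fiber $Y_\infty$, since for large $R$ the affine curve $F^{-1}(\varepsilon)$ is captured inside $B(\xi,R)$) is the one point I would check carefully. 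Granting it, the left side becomes $-\sigma_{\reg{L}}(\zeta)+\Irr+\deg\Delta_{\Irr}+\Irr=2|Sp_{\HVS}(\reg{L})\cap(x,x+1)|+2\Irr$, so dividing by $2$ yields the first inequality of \eqref{eq:finiteinf1}. The second follows immediately from the first by Corollary~\ref{cor:diff}, which gives $|Sp_\infty\cap(x,x+1)|\ge|Sp_{\HVS}(\reg{L})\cap(x,x+1)|$. The $Sp\setminus[x,x+1]$ versions are obtained verbatim by using the $-u_L$ inequalities throughout (the second halves of Corollaries~\ref{cor:univ}, \ref{cor:diff}, etc.), and the integer cases $x\in\{0,1\}$ reduce to $\sum_j\delta_j\le\delta_\infty$-type statements as in Proposition~\ref{prop:localsemic}. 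The main obstacle I anticipate is precisely the Euler-characteristic identification $1-\chi(C_{smooth})=\deg\Delta_{\Irr}+\Irr$ together with tracking the genus/component bookkeeping through Proposition~\ref{prop:morse-cite} when there are several singular points simultaneously; everything else is a routine translation through Corollary~\ref{cor:univ}.
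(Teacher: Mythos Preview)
Your argument is essentially the paper's proof in the case when $C=F^{-1}(0)$ is \emph{regular at infinity}, and in that case it is correct. The gap is precisely the step you flag yourself: you assert that for $R\gg 0$ one has $S^3(\xi,R)\cap C=\reg{L}$ and that the smoothing $C_{smooth}=F^{-1}(\varepsilon)\cap B(\xi,R)$ has the homotopy type of the generic fiber $Y_\infty$. Both claims are the \emph{definition} of $0$ being regular at infinity; if $0$ happens to be one of the irregular values $y_1,\dots,y_M$ of $F$, the link $L_C:=C\cap S^3(\xi,R)$ is in general \emph{not} isotopic to $\reg{L}$, and the topology of $F^{-1}(c)\cap(\mathbb{C}^2\setminus B(\xi,R))$ genuinely changes as $c\to 0$, so no single $R$ makes $F^{-1}(\varepsilon)\cap B(\xi,R)\simeq Y_\infty$. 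Hence your inequality \eqref{eq:diff-r0} is established with $L_C$ on the left, not with $\reg{L}$, and Corollary~\ref{prop:degcp} cannot be applied directly.

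The paper resolves this by a two-step argument in the irregular case: first apply \eqref{eq:diff-r0} inside $B(\xi,r_0)$ with the (possibly irregular) link $L_C$ on the left; then pass to a nearby smooth regular fiber $C_\varepsilon=F^{-1}(\varepsilon)$, observe that $C_\varepsilon\cap S^3(\xi,r_0)$ is still isotopic to $L_C$, and apply Corollary~\ref{cor:smooth-morse} on the smooth annulus $C_\varepsilon\cap(B(\xi,r_1)\setminus B(\xi,r_0))$ to trade $L_C$ for $\reg{L}$ at the cost of the Euler characteristic of that annulus. Adding the two inequalities reproduces \eqref{eq:prstep1} with $\reg{L}$ and $\chi(C_\varepsilon)=\chi(Y_\infty)$, after which your translation via Corollary~\ref{cor:univ} and Corollary~\ref{prop:degcp} goes through verbatim. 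A minor second point: for $x\in\{0,1\}$ the paper does not use a $\delta$--invariant comparison but rather observes that $1\notin\Delta_{\Irr}$ forces $\reg{L}$ to be a knot (so $\Irr=0$), and then reduces to the interior case by a small shift $x\mapsto x\pm\theta$.
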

\noindent
In the good case, if the regular link at infinity is fibred (e.g. if it is a knot), then $\Irr=0$ and the
second inequality of \eqref{eq:finiteinf1} takes the form
$|Sp_\infty\cap (x,x+1)|\ge\sum_{k=1}^n\left|Sp_k(x,x+1)\right|$.
\begin{proof}
First we assume that $x\in(0,1)$. We focus on the case $Sp\cap(x,x+1)$ the case $Sp\setminus[x,x+1]$ is analogous.

If $C$ is regular at infinity,
the inequality \eqref{eq:diff-r0} reads as
\begin{equation}\label{eq:prstep1}
-\sigma_{\reg{L}}(\zeta)+n_{\reg{L}}(\zeta)+(1-\chi(C_{smooth}))\ge \sum_{j}(-\sigma_j(\zeta)+n_j(\zeta)+\mu_j),
\end{equation}
where $C_{smooth}$ is the smoothing of $C$.
Since each link $\LS_j$ is algebraic,  $-\sigma_j(\zeta)+n_j(\zeta)+\mu_j
\ge 2|Sp_j\cap(x,x+1)|$. On the other hand,
by Proposition~\ref{prop:degcp} we get
\begin{equation}\label{eq:prstep2}
1-\chi(C_{smooth})=\Irr+\deg\Delta_{\Irr}.
\end{equation}
By Lemma~\ref{lem:Lreg}(d)  $\cp_{\reg{L}}$ has no roots outside the unit circle, hence
Corollary~\ref{cor:univ} applies. Since $\zeta$ is not a root of $\cp_{\reg{L}}$, $\nn(\zeta)=0$,
hence
\begin{equation}\label{eq:prstep3}
-\sigma_{\reg{L}}(\zeta)+\deg\cp_{\reg{L}}=2|Sp_{\reg{L}}\cap (x,x+1)|,
\end{equation}
and $n(\zeta)=\Irr$ (see \ref{ss:Lfund}). Then
\eqref{eq:prstep1}, \eqref{eq:prstep2} and \eqref{eq:prstep3} prove the statement in this case.

If $C$ is not regular at infinity, we argue as follows. We take an $r_0$ such that
$C\cap S^3(\xi,r_0)$ is the link of $C$ at infinity, denoted by  $L_C$.
Then \eqref{eq:diff-r0} yields
\begin{equation}\label{eq:irreg-1}
-\sigma_{L_C}(\zeta)+n_{L_C}(\zeta)+(1-\chi(C^{r_0}_{smooth}))\ge \sum_{j}(-\sigma_j(\zeta)+n_j(\zeta)+\mu_j),
\end{equation}
where
$C^{r_0}_{smooth}:=C_\varepsilon\cap B(\xi,r_0)$ is the smoothing of $C$ in $B(\xi,r_0)$. Here
$C_\varepsilon:= F^{-1}(\varepsilon)$ is smooth and regular at infinity
(for $\varepsilon$ non-zero and sufficiently small).
Moreover, we can assume that
the links $C\cap S^3(\xi,r_0)$ and  $C_\varepsilon\cap S^3(\xi,r_0)$ are isotopic.
Let $r_1$ be such that $C_\varepsilon\cap S^3(\xi,r_1)$ is the regular link of $F$ at infinity.
Corollary~\ref{cor:smooth-morse} applied to $C_\varepsilon$ yields
\begin{equation}\label{eq:irreg-2}
-\sigma_{\reg{L}}(\zeta)+n_{\reg{L}}(\zeta)-(-\sigma_{L_C}(\zeta)+n_{L_C}(\zeta))\ge\chi(C_{01}),
\end{equation}
where $C_{01}=C_\varepsilon\cap (B(\xi,r_1)\setminus B(\xi,r_0))$.
 \eqref{eq:irreg-1} and \eqref{eq:irreg-2} combined yields
\[-\sigma_{\reg{L}}(\zeta)+n_{\reg{L}}(\zeta)+(1-\chi(C_\varepsilon))\ge \sum_{j}(-\sigma_j(\zeta)+n_j(\zeta)+\mu_j).\]
This inequality  is identical to \eqref{eq:prstep1} and we proceed further as in the previous case.

Assume that $x=0$. Then, by the assumption, 1 is not a root of the
Alexander polynomial of $\reg{L}$, hence $\reg{L}$ is a knot (because $U_{\lambda=1}$ is trivial, but  its dimension
is $\nu-1$ by  Proposition~\ref{prop:SS}). Therefore the link at infinity is good, $\Irr=0$ and
$Sp_{\HVS}(\reg{L})=Sp_\infty$.

For $\theta>0$ sufficiently small $|Sp_\infty\cap (0,1)|=
|Sp_\infty\cap(\theta,1+\theta)|$ (because $1\not\in Sp_{\HVS}(\reg{L})=Sp_\infty$).
On the other hand, in the local case,
$|Sp_j\cap(0,1)|\le |Sp_j\cap(\theta,1+\theta)|$, hence
 the statement follows from the  case $x\in(0,1)$.

The case $x=1$ follows by the same argument with $\theta<0$.
\end{proof}

\end{document}